\newcommand{\Rmnum}[1]{\expandafter\@slowromancap\romannumeral #1@}
\newcommand{\Ann}{\mathrm{Ann}}
\newcommand{\gln}{\mathfrak{gl}(n,\mathbb{C})}
\newcommand{\spn}{\mathfrak{sp}(n,\mathbb{C})}
\newcommand{\sob}{\mathfrak{so}(2n+1,\mathbb{C})}
\newcommand{\sod}{\mathfrak{so}(2n,\mathbb{C})}
\newcommand{\hobox}[3]{\draw (0+#1,0-#2) rectangle (1+#1,-1-#2)++(-0.5,+0.5) node {$ #3$};}
\newcommand{\gkd}{\operatorname{GKdim}}
\newcommand{\domscale}{0.5}
\newcommand{\GK}{\mathrm{GKdim}}
\newcommand{\pf}{\begin{proof}}
\newcommand{\epf}{\end{proof}}
\newcommand{\eq}{\begin{equation}}
\newcommand{\eeq}{\end{equation}}
\newcommand{\eqn}{\begin{equation*}}
\newcommand{\eeqn}{\end{equation*}}
\newcommand{\frg}{\mathfrak{g}}
\newcommand{\frh}{\mathfrak{h}}
\newcommand{\frl}{\mathfrak{l}}
\newcommand{\frn}{\mathfrak{n}}
\newcommand{\frq}{\mathfrak{q}}
\newcommand{\fru}{\mathfrak{u}}
\newcommand{\frsl}{\mathfrak{sl}}
\newcommand{\frso}{\mathfrak{so}}
\newtheorem{Thm}[equation]{Theorem}
\newtheorem{prop}[equation]{Proposition}
\newtheorem{lem}[equation]{Lemma}
\newtheorem{Rem}[equation]{Remark}
\theoremstyle{definition}
\newtheorem{definition}[equation]{Definition}
\newtheorem{example}[equation]{Example}
\numberwithin{equation}{section}
\begin{document}

\title[ GKdim and Reducibility]{Gelfand-Kirillov dimensions and Reducibility of  scalar type generalized Verma modules for classical Lie algebras }

\author{Zhanqiang Bai and Jing Jiang*}
\address[Bai]{School of Mathematical Sciences, Soochow University, Suzhou 215006, P. R. China}
\email{zqbai@suda.edu.cn}

\address[Jiang]{School of Mathematical Sciences, Soochow University, Suzhou 215006, P. R. China}
\email{jjsd6514@163.com}

\thanks{*Corresponding author}
\subjclass[2010]{17B10, 17B20, 22E47}

\bigskip

\begin{abstract}
Let $\mathfrak{g}$ be a classial Lie algebra and $\mathfrak{p}$ be a maximal parabolic subalgebra. Let $M$ be a generalized Verma module induced from a one dimensional representation of $\mathfrak{p}$. Such $M$ is called a  scalar type generalized Verma module. Its simple quotient $L$ is a highest weight moudle. In this paper, we will
determine the reducibility of such scalar type generalized Verma modules by computing the Gelfand-Kirillov dimension of $L$.

\noindent{\textbf{Keywords:} Generalized Verma module; Gelfand-Kirillov dimension; Young tableau}
\end{abstract}

\maketitle

\section{ Introduction}

Let $\mathfrak{g}$ be a finite-dimensional complex semisimple Lie
algebra and $U(\frg)$ be its universal enveloping algebra.
Fix a Cartan subalgebra $\mathfrak{h}$ and denote by $\Delta$ the root system associated with $(\frg, \frh)$. Choose a positive root system
$\Delta^+\subset\Delta$ and a simple system $\Pi\subset\Delta^+$. Let $\rho$ be the half sum of roots in $\Delta^+$. Let
$\mathfrak{g}=\bar\fru\oplus\mathfrak{h}\oplus
\mathfrak{u}$ be the Cartan decomposition of $\frg$ with nilpotent radical $\fru$ and its dual space $\bar\fru$.   Choose a subset $I\subset\Pi$ and  it generates a subsystem
$\Delta_I\subset\Delta$.
Let $\frq_I$ be the standard parabolic subalgebra corresponding to $I$ with Levi decomposition $\frq_I=\frl_I\oplus\fru_I$. We frequently drop the
subscript $I$ if there is no confusion.

Let $F(\lambda)$ be a finite-dimensional irreducible $\mathfrak{l}$-module with highest weight $\lambda\in\frh^*$. It can also be viewed as a
$\mathfrak{q}$-module with trivial $\mathfrak{u}$-action. The {\it generalized Verma modules} $M_I(\lambda)$ is defined by
\[
M_I(\lambda):=U(\frg)\otimes_{U(\frq)}F(\lambda).
\]
The simple quotient of $M_I(\lambda)$ is denoted by $L(\lambda)$. In the case when $\dim(F(\lambda))=1$, $M_I(\lambda)$ is called a generalized Verma
module of {\it scalar type}.

The reducibility of generalized Verma modules is very importan in representation theory. It is closely related with many other problems, see for example \cite{BXiao, EHW, Ma}. The most important tool is Jantzen's criterion  \cite{JC}. 
But in general, this criterion  is very complicated in application. Kubo \cite{KT} found some practical reducibility criteria  for all the scalar generalized Verma modules associated with maximal parabolic subalgebras. By using Kubo's result, He \cite{He} gave the reducibily for all scalar-type generalized Verma modules of Hermitian symmetric pairs. Then He-Kubo-Zierau \cite{HKZ} found the  reducibily for all scalar-type generalized Verma modules associated with  maximal parabolic subalgebras. Bai-Xiao \cite{BXiao} found the reducibily for all  generalized Verma modules of Hermitian symmetric pairs.

 Gelfand-Kirillov dimension 
is an important invariant for infinite-dimensional algebraic structures. Since Joseph \cite{Jo78}, Gelfand-Kirillov dimension is used to measure the size of representations of Lie algebras and Lie groups.
Recently Bai-Xiao \cite{BX} proved that a scalar-type generalized Verma module $M$ is reducible if and only if the Gelfand-Kirillov dimension of its simple quotient $L$ is strictly less than $\dim(\fru_I)$. By using this result and  their algorithms \cite{BXX} of computing GK dimensions, we can give a new proof for 
the complete list of parameters $\lambda+\rho$ for which the scalar-type generalized Verma modules associated with   maximal parabolic subalgebras are reducible. Then by using same method, we can give the GK dimensions of all scalar type highest weight moudles. Those modules with minimal positive GK dimensions can be easily found from our formula.

The paper is organized as follows. The necessary priliminaries for maximal parabolic subalgebras and Gelfand-Kirillov dimension are given in Section $2$ and $3$. In Section $4$, we give the reducibility of scalar generalized Verma modules for type $A$. In Section $5$, we give the reducibility of scalar generalized Verma modules for types $B$ and $C$. In Section $6$, we give the reducibility of scalar generalized Verma modules for type $D$. In Section $7$, we give the explicit formula of Gelfand-Kirillov dimensions for  scalar type highest weight modules.

\textbf{Acknowledgments.}
This project is supported by the National Science Foundation of China (Grant No. 12171344).

\section{Maximal parabolic subalgebras}
Let $\mathfrak{g}$ be a finite-dimensional complex semisimple Lie
algebra and let $\mathfrak{b}=\mathfrak{h}\oplus\mathop\oplus\limits_{\alpha\in\Delta^+}\frg_\alpha$
be a fixed borel subalgebra of $\frg$. For a maximal parabolic subalgebra $\frq=\frl\oplus\fru$, it corresponds to the subsets  $I=\Pi\setminus\{\alpha_i\}$
for some $\alpha_i\in\Pi$.
In this section, we give the explicit values of $\dim(\fru)$. The results are as follows.
\begin{enumerate}
\item If $\mathfrak{g}$ is of type $A_{n-1}$, for  $I=\Pi\setminus\{\alpha_p\}$, we have
\begin{align*}
	\Delta^+(\frl)=\{e_i-e_j|1\le i< j\le p\quad \text{or}\quad p+1\le i< j\le n\}.
\end{align*}
\begin{align*}
	\Delta(\fru)=\{e_i-e_j|1\le i< j\le p,\: p+1\le i< j\le n\}.
\end{align*}
So	$\dim(\fru)=p(n-p)$.	
\item If $\mathfrak{g}$ is of type $B_n$,
\begin{enumerate}
	\item for  $I=\Pi\setminus\{\alpha_1\}$, we have
	\begin{align*}
	\Delta^+(\frl)=\{e_i\pm e_j|2\le i< j\le n\}\cup\{e_j|2\le j\le n\},
	\end{align*}
	\begin{align*}
	\Delta(\fru)=\Delta^+-\Delta^+(\frl)=\{e_1\pm e_j|1\le j\le n\}\cup\{e_1\}.
	\end{align*}
So	$\dim(\fru)=2(n-1)+1=2n-1$.	
	\item  for  $I=\Pi\setminus\{\alpha_p\}$, we have
	\begin{align*}
	\Delta^+(\frl)&=\{e_i-e_j|1\le i<j\le p\}\cup\{e_i\pm e_j|p+1\le i<j\le n\}\\
	&\cup\{e_j|p+1\le i<j\le n\}.
	\end{align*}	
So	\begin{align*}
\dim(\fru)&=|\Delta^+-\Delta^+(\frl)|\\
&=n^2-C^2_p-(n-p)-2C^2_{n-p}\\
&=2np-\frac{3}{2}p^2+\frac{1}{2}p.
\end{align*}
	
	\item for  $I=\Pi\setminus\{\alpha_n\}$, we have
	\begin{align*}
	\Delta^+(\frl)=\{e_i- e_j|1\le i< j\le n\},
	\end{align*}
	\begin{align*}
	\Delta(\fru)=\Delta^+-\Delta^+(\frl)=\{e_i+ e_j|1\le i<j\le n\}\cup\{e_j|1\le j\le n\}.
	\end{align*}
\end{enumerate}
So $\dim(\fru)=C^2_n+n=\frac{n(n+1)}{2}$.
\item If $\mathfrak{g}$ is of type $C_n$, in the case of $B_n$ and $C_n$, $\dim(\mathfrak{u})$ is equal.
\item If $\mathfrak{g}$ is of type $D_n$
\begin{enumerate} 
	\item for  $I=\Pi\setminus\{\alpha_1\}$, we have
	\begin{align*}
	\Delta^+(\frl)=\{e_i\pm e_j|2\le i< j\le n\},
	\end{align*}
	\begin{align*}
	\Delta(\fru)=\Delta^+-\Delta^+(\frl)=\{e_1\pm e_j|2\le i<j\le n\}.
	\end{align*}
So	$\dim(\fru)=2(n-1)$.	
	\item for  $I=\Pi\setminus\{\alpha_p\}$, we have
	$$
	\Delta^+(\frl)=\{e_i- e_j|1\le i< j\le p\}\cup\{e_i\pm e_j|p+1\le i< j\le n\}.
	$$

So	$\dim(\fru)=|\Delta^+-\Delta^+(\frl)|=n^2-n-C^2_p-2C^2_{n-p}=2pn-\frac{3}{2}p^2-\frac{1}{2}p$.
		\item for  $I=\Pi\setminus\{\alpha_n\}$, we have
	\begin{align*}
	\Delta^+(\frl)=\{e_i- e_j|1\le i< j\le n\},
	\end{align*}
	\begin{align*}
	\Delta(\fru)=\Delta^+-\Delta^+(\frl)=\{e_i+ e_j|1\le i<j\le n\}.
	\end{align*}
So	$\dim(\fru)=C^2_n=\frac{n(n-1)}{2}$.
\end{enumerate}
\end{enumerate}

For scalar generalized Verma modules associated with a maximal parabolic subalgebra, we have the following two lemmas.
\begin{lem}[{\cite[Lemma 1.4]{HKZ}}]\label{scalar}
	When $M_I(\lambda)$ is of scalar type, we will have  $\lambda=z\xi_i $ for some $z\in\mathbb{R}$,
	and $\xi_i$ is the fundamental weight corresponding to the simple root $\alpha_{i}$ associated with the given maximal parabolic subalgebra.
\end{lem}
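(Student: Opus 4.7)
The plan is to translate the hypothesis ``$M_I(\lambda)$ is of scalar type'' into a vanishing condition on $\lambda$ and then read off the shape of $\lambda$ from the fundamental weight basis.

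First I would unpack the definitions. The module $M_I(\lambda) = U(\frg)\otimes_{U(\frq)} F(\lambda)$ being of scalar type means, by definition, that $\dim F(\lambda)=1$. Thus $F(\lambda)$ is a one-dimensional representation of $\frq = \frl\oplus\fru$ on which $\fru$ acts trivially, so everything is controlled by how $\frl$ acts. Since any one-dimensional representation of a Lie algebra factors through its abelianization, the semisimple part $[\frl,\frl]$ must act by zero on $F(\lambda)$.

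Next I would identify the Cartan of $[\frl,\frl]$. Because $I = \Pi\setminus\{\alpha_i\}$, the derived Levi $[\frl,\frl]$ is the semisimple subalgebra generated by the root spaces $\frg_{\pm\alpha_j}$ for $\alpha_j \in I$, and its Cartan subalgebra is
\[
\frh \cap [\frl,\frl] \;=\; \Span\{\alpha_j^\vee : j \neq i\}.
\]
The vanishing of the character $\lambda$ on this Cartan is therefore equivalent to the conditions
\[
\langle \lambda,\alpha_j^\vee\rangle = 0 \qquad \text{for all } j\neq i.
\]

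Finally, I would use the fact that for a semisimple $\frg$ the fundamental weights $\xi_1,\dots,\xi_n$ form the dual basis of $\frh^*$ to the simple coroots $\alpha_1^\vee,\dots,\alpha_n^\vee$, so that any $\lambda \in \frh^*$ expands as
\[
\lambda \;=\; \sum_{k=1}^{n}\langle \lambda,\alpha_k^\vee\rangle\,\xi_k.
\]
Combining this expansion with the vanishing conditions above kills every term except the $k=i$ one, leaving $\lambda = z\,\xi_i$ with $z = \langle \lambda,\alpha_i^\vee\rangle$, as claimed. There is no real obstacle here: the lemma is essentially definitional, and the only point that deserves a line of justification is the identification of the Cartan of $[\frl,\frl]$ in terms of the coroots indexed by $I$; everything else is an immediate consequence of the duality between simple coroots and fundamental weights.
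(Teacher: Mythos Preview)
Your argument is correct and is the standard one. Note, however, that the paper does not actually prove this lemma: it is quoted verbatim as \cite[Lemma~1.4]{HKZ} and used as a black box. So there is no ``paper's own proof'' to compare against --- your write-up simply supplies the (routine) justification that the paper omits. One minor point: your argument shows $\lambda = z\,\xi_i$ with $z=\langle\lambda,\alpha_i^\vee\rangle$ a priori complex; the restriction to $z\in\mathbb{R}$ in the lemma's statement reflects the paper's standing convention for the parameter range rather than something forced by the scalar-type hypothesis alone.
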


\begin{lem}[{\cite[Lemma 4.5]{BX}}]\label{GKdown}
	For any $z\in\mathbb{R}$, we have
	\[
	\GK(L((z+1)\xi))\leq\GK(L(z\xi)).
	\]
	In particular, if $M_I(z\xi)$ is reducible, then $M_I((z+1)\xi)$ is also reducible.
\end{lem}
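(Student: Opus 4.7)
The plan is to realize $L((z+1)\xi)$ as a subquotient of the tensor product $L(z\xi)\otimes F(\xi)$, where $F(\xi)$ denotes the finite-dimensional irreducible $\frg$-module with highest weight $\xi$. This makes sense because $\xi$ is a fundamental weight and hence dominant integral. Once the subquotient is produced, the desired inequality follows from two standard facts about Gelfand-Kirillov dimension: it is monotone under passage to subquotients, and it is invariant under tensoring with a finite-dimensional module.

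To produce the subquotient, I would fix a highest weight vector $v_{z\xi}\in L(z\xi)$ and a highest weight vector $v_\xi\in F(\xi)$, and set $w:=v_{z\xi}\otimes v_\xi$. A direct computation shows that $w$ has weight $(z+1)\xi$, and for every $X\in\fru$ one has
\[
X\cdot w \;=\; Xv_{z\xi}\otimes v_\xi + v_{z\xi}\otimes Xv_\xi \;=\; 0,
\]
so $U(\frg)\cdot w\subset L(z\xi)\otimes F(\xi)$ is a highest weight module of highest weight $(z+1)\xi$. Its unique simple quotient is $L((z+1)\xi)$, which is therefore a subquotient of $L(z\xi)\otimes F(\xi)$. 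Combining this with the two facts recalled above yields
\[
\GK(L((z+1)\xi))\;\leq\;\GK(L(z\xi)\otimes F(\xi))\;=\;\GK(L(z\xi)),
\]
which is the main assertion.

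For the ``in particular'' clause, I would invoke the Bai-Xiao reducibility criterion \cite{BX} recalled in the introduction: a scalar-type $M_I(\mu)$ is reducible if and only if $\GK(L(\mu))<\dim\fru_I$. Assuming $M_I(z\xi)$ is reducible gives $\GK(L(z\xi))<\dim\fru_I$; by the inequality just proved, $\GK(L((z+1)\xi))<\dim\fru_I$ as well, so $M_I((z+1)\xi)$ is reducible. The only real subtlety is the subquotient construction in the middle step, but this is routine once one observes that $\xi$ is integral dominant so $F(\xi)$ is available; the rest is a standard tensor-product argument together with well-known behavior of GK dimension.
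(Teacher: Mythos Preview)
Your argument is correct. The paper itself does not supply a proof of this lemma: it is quoted verbatim from \cite[Lemma~4.5]{BX}, so there is no in-paper proof to compare against. The tensor-product argument you give---producing a highest weight vector $v_{z\xi}\otimes v_\xi$ of weight $(z+1)\xi$ inside $L(z\xi)\otimes F(\xi)$, hence realizing $L((z+1)\xi)$ as a subquotient, and then invoking monotonicity of $\GK$ under subquotients together with $\GK(M\otimes F)=\GK(M)$ for finite-dimensional $F$---is the standard one and is precisely how the result is established in the cited reference. Your deduction of the ``in particular'' clause from Lemma~\ref{reducible} is also exactly how the paper uses the two lemmas in tandem throughout Sections~4--6.
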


\section{Gelfand-Kirillov dimension}

Let $M$ be a finite generated $U(\mathfrak{g})$-module. Fix a finite dimensional generating subspace $M_0$ of $M$. Let $U_{n}(\mathfrak{g})$ be the standard filtration of $U(\mathfrak{g})$. Set $M_n=U_n(\mathfrak{g})\cdot M_0$ and
\(
\text{gr} (M)=\bigoplus\limits_{n=0}^{\infty} \text{gr}_n M,
\)
where $\text{gr}_n M=M_n/{M_{n-1}}$. Thus $\text{gr}(M)$ is a graded module of $\text{gr}(U(\mathfrak{g}))\simeq S(\mathfrak{g})$.

\begin{definition} The \textit{Gelfand-Kirillov dimension} of $M$  is defined by
	\begin{equation*}
	\operatorname{GKdim} M = \varlimsup\limits_{n\rightarrow \infty}\frac{\log\dim( U_n(\mathfrak{g})M_{0} )}{\log n}.
	\end{equation*}
\end{definition}


\begin{definition}
	The  \textit{associated variety} of $M$ is defined by
	\begin{equation*}
	V(M):=\{X\in \mathfrak{g}^* \mid f(X)=0 \text{ for all~} f\in \operatorname{Ann}_{S(\mathfrak{g})}(\operatorname{gr} M)\}.
	\end{equation*}
\end{definition}

The above two definitions are independent of the choice of $M_0$, and $\dim V(M)=\gkd M$ (e.g., \cite{NOT}). 

\begin{definition} Let $\mathfrak{g}$ be a finite-dimensional semisimple Lie algebra. Let $I$ be a two-sided idel in $U(\mathfrak{g})$. Then $\text{gr}(U(\mathfrak{g})/I)\simeq S(\mathfrak{g})/\text{gr}I$ is a graded $S(\mathfrak{g})$-module. Its annihilator is $\text{gr}I$. We define its associated variety by
	$$V(I):=V(U(\mathfrak{g})/I)=\{X\in \mathfrak{g}^* \mid p(X)=0\ \mbox{for all $p\in {\text{gr}}I$}\}.
	$$
\end{definition}

We have the following theorem.

\begin{Thm}[ \cite{Jo85}]
	Let $\mathfrak{g}$ be a reductive Lie algebra and $I$ be a primitive ideal in $U(\mathfrak{g})$.Then $V(I)$ is the closure of a single nilpotent coadjoint orbit $\mathcal{O}_I$ in $\mathfrak{g}^*$. In particular, for a highest weight module $L(\lambda)$, we have $V(AnnL(\lambda))=\overline{\mathcal{O}}_{Ann(L(\lambda))}$.
\end{Thm}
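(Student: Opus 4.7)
The plan is to establish three properties of $V(I) \subseteq \mathfrak{g}^*$: it is $\operatorname{Ad}(G)$-stable, it is contained in the nilpotent cone $\mathcal{N}$, and it is irreducible, so that as a $G$-stable conical subvariety of the nilpotent cone it must coincide with the closure of a single coadjoint orbit.

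First, I would check $G$-stability. Because $I$ is two-sided, it is preserved by conjugation by units of $U(\mathfrak{g})$, hence by the adjoint action of $G$; since the standard filtration on $U(\mathfrak{g})$ is $G$-stable, $\operatorname{gr}(I) \subseteq S(\mathfrak{g})$ is $G$-stable and therefore so is its vanishing locus $V(I)$. Next, I would show $V(I) \subseteq \mathcal{N}$. Since $I$ is primitive it has an infinitesimal character, so $I \cap Z(\mathfrak{g})$ has finite codimension in the center $Z(\mathfrak{g})$. Passing to associated graded objects, and using the Harish-Chandra identification $\operatorname{gr}(Z(\mathfrak{g})) = S(\mathfrak{g})^G$, one concludes that $\operatorname{gr}(I)$ contains the augmentation ideal $S(\mathfrak{g})^G_+$. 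By Kostant's theorem the zero locus of $S(\mathfrak{g})^G_+$ in $\mathfrak{g}^* \cong \mathfrak{g}$ (via the Killing form) is exactly $\mathcal{N}$. Since $\mathcal{N}$ has only finitely many coadjoint orbits, $V(I)$ is automatically a finite union of nilpotent orbit closures.

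The main obstacle is irreducibility. At this point I would invoke Gabber's involutivity theorem, which ensures that the associated variety $V(I)$ is coisotropic and hence a union of closures of symplectic leaves, i.e.\ of coadjoint orbits—consistent with what the preceding steps already produce. To rule out more than one top-dimensional component, the deep ingredient is Joseph's theory of Goldie rank polynomials: the Goldie rank of $U(\mathfrak{g})/I$ together with Joseph's additivity principle, which attaches a single irreducible component to the prime ideal $I$, forces $V(I)$ to be the closure of one orbit $\mathcal{O}_I$. The ``in particular'' statement for $L(\lambda)$ is then automatic: $\operatorname{Ann} L(\lambda)$ is a primitive ideal because $L(\lambda)$ is simple, so applying the general assertion with $I = \operatorname{Ann} L(\lambda)$ gives $V(\operatorname{Ann} L(\lambda)) = \overline{\mathcal{O}}_{\operatorname{Ann} L(\lambda)}$.
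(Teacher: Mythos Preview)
The paper does not actually prove this theorem: it is stated with a citation to Joseph's 1985 paper \cite{Jo85} and no argument is given, since it is quoted only as background. So there is no ``paper's own proof'' to compare against.

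That said, your outline is a reasonable sketch of how Joseph's theorem is typically established. The first two steps ($G$-stability of $V(I)$ and the inclusion $V(I)\subseteq\mathcal{N}$ via the infinitesimal character and Kostant's description of the nilpotent cone) are standard and correctly argued. The genuine content, as you recognise, is irreducibility. Here your description is a bit imprecise: Gabber's involutivity theorem concerns the characteristic variety of a $\mathcal{D}$-module (or the associated variety of a module), not directly the associated variety of a two-sided ideal, and its conclusion that $V(I)$ is a union of orbit closures is already forced by $G$-stability and the finiteness of nilpotent orbits, so it adds nothing at this stage. Joseph's actual argument that $\sqrt{\operatorname{gr} I}$ is prime relies on his analysis of Goldie ranks and the behaviour of $U(\mathfrak{g})/I$ under translation functors, rather than on an ``additivity principle'' attaching a component to $I$. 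If you want to make the sketch honest, you should either cite Joseph's irreducibility theorem directly at that point, or outline more specifically how the Goldie-rank machinery forces a single component (e.g.\ via the fact that the Goldie rank of $U(\mathfrak{g})/I$ is given by a single polynomial in the Weyl-group orbit of the central character, which is incompatible with multiple top-dimensional components).
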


Let $G$ be a connected semisimple finite dimensional complex algebraic group with Lie algebra $\mathfrak{g}$. Let $W$ be the Weyl group of $\mathfrak{g}$.  We use $L_w$ to denote the simple highest weight $\mathfrak{g}$-module of highest weight $-w\rho-\rho$ with  $w\in W$.  We denote $I_w=\Ann(L_w)$. Then we have the following theorem.

\begin{Thm}[ \cite{BoB3}]
	$V(I_w)=\overline{\mathcal{O}}_w$.

\end{Thm}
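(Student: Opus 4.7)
The task is to identify two \emph{a priori} different orbits attached to $w$. By the Joseph theorem stated just above, $V(I_w)$ is already known to be the closure of a single nilpotent coadjoint orbit, say $\mathcal{O}_{I_w}$; the content of the assertion is that this orbit equals $\mathcal{O}_w$, the orbit obtained from the left Kazhdan--Lusztig cell of $w$ via the Steinberg/Springer correspondence.

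First I would perform a left-cell reduction. By the Joseph--Vogan theorem, $I_w=I_{w'}$ whenever $w\sim_L w'$ in the left-cell preorder, so $\mathcal{O}_{I_w}$ depends only on the left cell $C$ of $w$. Since $\mathcal{O}_w$ is by definition also a left-cell invariant, the problem reduces to attaching a well-defined orbit to each left cell $C$ and matching the algebraic orbit coming from $V(I_w)$ with the geometric orbit coming from Springer's theory. The bridge is Joseph's family of Goldie rank polynomials $\{p_{I_w}\}_{w\in C}$, which span an irreducible $W$-submodule of the harmonic polynomials on $\frh^*$; this submodule coincides, by Joseph's work, with the Springer representation attached to $\mathcal{O}_w$ with trivial local system.

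The main and hardest step is to identify $\mathcal{O}_{I_w}$ with the Springer orbit of this cell representation. Joseph's degree formula $\deg p_{I_w}=\tfrac12\operatorname{codim}_{\ggs}\mathcal{O}_{I_w}$, combined with Springer's characterization of an orbit by the $b$-invariant of its representation, already fixes the dimension of the orbit. To pin down the orbit itself I would invoke Borho--Brylinski's analysis of the characteristic cycle of the $\mathcal{D}$-module on the flag variety corresponding to $L_w$ under Beilinson--Bernstein localization: the moment-map image of the support of this cycle is precisely $\overline{\mathcal{O}}_{I_w}$, whereas Springer's geometric construction reads $\mathcal{O}_w$ off the same conormal components of the flag variety. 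The principal obstacle is to show that these two readings agree, i.e.\ that the algebraic (Goldie rank) and geometric (Springer fibre) incarnations of the cell representation really do produce the same orbit; outside type $A$ this forces one to invoke the theory of special representations of $W$ and to control the multiplicities of the non-open conormal components, which is exactly where the Borho--Brylinski characteristic variety formalism does the heavy lifting.
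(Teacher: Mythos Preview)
The paper does not prove this theorem at all: it is quoted verbatim as a result of Borho--Brylinski \cite{BoB3} and used only as background, the single application being the dimension identity $\dim V(I_w)=2\,\mathrm{GKdim}(L_w)$ cited from Vogan immediately afterwards. In particular, the symbol $\mathcal{O}_w$ is never given an independent definition in the paper; it is simply the orbit whose closure equals $V(I_w)$, so within this paper the statement functions as a piece of notation backed by the cited reference rather than as something to be proved.

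Your sketch is therefore not in competition with anything in the paper. For what it is worth, the outline you give---reduction to left cells via $I_w=I_{w'}$ for $w\sim_L w'$, Joseph's Goldie-rank polynomials and their identification with the Springer cell representation, and the Borho--Brylinski characteristic-cycle/moment-map argument pinning down the orbit geometrically---is a faithful summary of the circle of ideas that actually goes into the Borho--Brylinski result. If you want to turn it into a self-contained proof you would still need to supply the hard inputs (Joseph's irreducibility theorem for the Goldie-rank span, the Barbasch--Vogan/Joseph match between that $W$-module and the Springer representation, and the microlocal analysis of \cite{BoB3} relating the characteristic variety of the localized $\mathcal{D}$-module to $V(I_w)$), but as a roadmap it is accurate.
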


From Vogan \cite{Vo78}, we have $\dim V(I_w)=2\mathrm{GKdim}(L_w)$. 
In \cite{BX1} and  \cite{BXX}, Bai-Xiao-Xie have found an algorithm to compute the  Gelfand-Kirillov dimensions of highest weight modules of classical Lie algebras. We recall their algorithms here.

For  a totally ordered set $ \Gamma $, we  denote by $ \mathrm{Seq}_n (\Gamma)$ the set of sequences $ x=(x_1,x_2,\cdots, x_n) $   of length $ n $ with $ x_i\in\Gamma $.
We say $q=(q_1, \cdots, q_N)$ is the \textit{dual partition} of a partition $p=(p_1, \cdots, p_N)$ and write $q=p^t$ if $q_i$ is the length of $i$-th column of the Young diagram $p$. 
Let $p(x)$ be the shape of the Young tableau $Y(x)$ obtained by applying Robinson-Schensted algorithm to $x\in \mathrm{Seq}_n (\Gamma)$. For convenience, we set $q(x)=p(x)^t$.

For a Young diagram $p$, use $ (k,l) $ to denote the box in the $ k $-th row and the $ l $-th column.
We say the box $ (k,l) $ is \textit{even} (resp. \textit{odd}) if $ k+l $ is even (resp. odd). Let $ p_i ^{ev}$ (resp. $ p_i^{odd} $) be the number of even (resp. odd) boxes in the $ i $-th row of the Young diagram $ p $. 
One can easily check that \begin{equation}\label{eq:ev-od}
p_i^{ev}=\begin{cases}
\left\lceil \frac{p_i}{2} \right\rceil&\text{ if } i \text{ is odd},\\
\left\lfloor \frac{p_i}{2} \right\rfloor&\text{ if } i \text{ is even},
\end{cases}
\quad p_i^{odd}=\begin{cases}
\left\lfloor \frac{p_i}{2} \right\rfloor&\text{ if } i \text{ is odd},\\
\left\lceil \frac{p_i}{2} \right\rceil&\text{ if } i \text{ is even}.
\end{cases}
\end{equation}
Here for $ a\in \mathbb{R} $, $ \lfloor a \rfloor $ is the largest integer $ n $ such that $ n\leq a $, and $ \lceil a \rceil$ is the smallest integer $n$ such that $ n\geq a $. For convenience, we set
\begin{equation*}
p^{ev}=(p_1^{ev},p_2^{ev},\cdots)\quad\mbox{and}\quad p^{odd}=(p_1^{odd},p_2^{odd},\cdots).
\end{equation*}

For $ x=(x_1,x_2,\cdots,x_n)\in \mathrm{Seq}_n (\Gamma) $, set
\begin{equation*}
\begin{aligned}
{x}^-=&(x_1,x_2,\cdots,x_{n-1}, x_n,-x_n,-x_{n-1},\cdots,-x_2,-x_1).
\end{aligned}
\end{equation*}

\begin{Thm}[{\cite[Theorem 1.5]{BXX}}]\label{integral}
	Let $\lambda+\rho=(\lambda_1, \lambda_2, \cdots, \lambda_n)\in \mathfrak{h}^*$ be  integral. Then
	
	\[	\emph{GKdim}\:L(\lambda)=\left\{
	\begin{array}{ll}
	\frac{n(n-1)}{2}-\sum\limits_{i\ge 1}(i-1)p(\lambda+\rho)_i &if\:\Delta =A_{n-1}\\	     	  	   
	n^2-\sum\limits_{i\ge 1}(i-1)p((\lambda+\rho)^-)_i^{odd} &if\:\Delta =B_{n}/C_{n}\\
	n^2-n-\sum\limits_{i\ge 1}(i-1)p((\lambda+\rho)^-)_i^{ev} &if\:\Delta =D_{n}
	\end{array}	
	\right.
	\]	
	
\end{Thm}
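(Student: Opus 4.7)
The plan is to reduce the computation of $\operatorname{GKdim} L(\lambda)$ to the dimension of a single nilpotent coadjoint orbit and then extract that dimension from a partition read off $\lambda+\rho$. By Vogan's identity $\dim V(I_w)=2\operatorname{GKdim}(L_w)$ combined with Joseph's theorem $V(I_w)=\overline{\mathcal{O}}_w$ stated above, one has
\[
\operatorname{GKdim} L(\lambda)=\tfrac{1}{2}\dim\mathcal{O}_{\Ann L(\lambda)},
\]
so the task reduces to (i) identifying the partition $p$ labelling the nilpotent orbit attached to $\Ann L(\lambda)$, and (ii) computing $\tfrac{1}{2}\dim\mathcal{O}_p$ from that partition.

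For type $A_{n-1}$, step (i) is provided by the Steinberg--Joseph correspondence: for integral $\lambda+\rho=(\lambda_1,\dots,\lambda_n)$, the orbit partition is the shape $p(\lambda+\rho)$ of the tableau obtained by Robinson--Schensted insertion on the sequence of coordinates. For regular $\lambda$ one has $\lambda+\rho=w\rho$ for some $w\in S_n$ and the shape coincides with the classical RS shape of $w$; singular cases are handled by a standard degeneration argument or directly from RS on sequences with repeated entries. For step (ii), the classical formula $\dim\mathcal{O}_p=n^2-\sum_j q_j^2$ in $\mathfrak{sl}_n$, together with the combinatorial identity $\sum_j q_j^2=\sum_i(2i-1)p_i$, yields
\[
\operatorname{GKdim} L(\lambda)=\tfrac{1}{2}\Bigl(n^2-\sum_i(2i-1)p_i\Bigr)=\tfrac{n(n-1)}{2}-\sum_{i\ge 1}(i-1)p_i.
\]

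For types $B_n,C_n,D_n$ the strategy is analogous but requires signed permutations. The doubled sequence $(\lambda+\rho)^-$ reflects the standard embedding of the Weyl groups of types $B,C,D$ into $S_{2n}$ as signed permutations, and the even/odd refinement reflects the parity constraints on partitions labelling nilpotent orbits in $\mathfrak{sp}_{2n}$ and $\mathfrak{so}_N$ (odd parts have even multiplicity in type $C$; even parts have even multiplicity in types $B$ and $D$). The idea is to invoke the Barbasch--Vogan--Garfinkle algorithm, which produces the orbit partition from the Robinson--Schensted shape of the signed permutation encoded by $\lambda+\rho$. The dimension of $\mathcal{O}_p$ in $\mathfrak{so}_N$ or $\mathfrak{sp}_{2n}$ has a closed form in terms of $\sum_j q_j^2$ corrected by the number of odd-length columns; regrouping contributions according to the parity of rows, via the formulas \eqref{eq:ev-od}, produces exactly the sums $\sum(i-1)p_i^{odd}$ or $\sum(i-1)p_i^{ev}$ appearing in the statement.

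The main obstacle is step (i) for types $B,C,D$: matching the shape produced by RS-insertion on the doubled sequence (together with its even/odd decomposition) with the actual orbit partition attached to $\Ann L(\lambda)$. For regular integral infinitesimal character this is essentially Garfinkle's description of left cells and the $\tau$-invariant in classical Weyl groups; singular weights require a reduction to the regular case and careful bookkeeping of the parity decomposition (in particular to handle ``very even'' orbits in type $D$, where Lusztig--Spaltenstein duality fails to be injective). Once the orbit is pinned down, the dimension calculation itself is a routine manipulation using \eqref{eq:ev-od}.
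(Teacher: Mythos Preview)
The paper does not prove this theorem: it is quoted verbatim as \cite[Theorem~1.5]{BXX} and used as a black box throughout Sections~4--7. There is therefore no argument in the present paper to compare your proposal against.

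That said, your outline is a reasonable sketch of the strategy behind the cited result, and you are honest about where the real work lies. The type~$A$ part is essentially complete once one accepts the Joseph--Steinberg identification of the orbit with the RS shape. For types $B$, $C$, $D$, however, your proposal is not a proof but a plan: you correctly locate the crux at step~(i), namely showing that Robinson--Schensted insertion on the doubled sequence $(\lambda+\rho)^-$, together with the even/odd split of rows, actually recovers the partition of the nilpotent orbit attached to $\Ann L(\lambda)$. Invoking ``the Barbasch--Vogan--Garfinkle algorithm'' names the right circle of ideas but does not discharge the obligation; Garfinkle's domino tableaux are not literally RS on the doubled word, and the passage between them (and the treatment of singular infinitesimal character via translation functors) is exactly the technical content of \cite{BXX}. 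Your remark about very even orbits in type~$D$ is also apt but unresolved in your sketch. In short: the strategy is right, the type~$A$ case is done, and the classical-type cases remain a program rather than a proof.
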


We define three functions $ F_A $, $ F_B $, $ F_D $ as 
\begin{align*}
F_A(x)&=\sum_{k\geq 1} (k-1) p_k,\\
F_B(x)&=\sum_{k\geq 1} (k-1) p_k^{ev},\\
F_D(x)&=\sum_{k\geq 1} (k-1) p_k^{odd},
\end{align*}
where $ p=p(x)=(p_1,p_2,\cdots) $.

\begin{definition}
	Fix $ \lambda+\rho=(\lambda_1,\cdots,\lambda_n) \in \mathfrak{h}^*$.
	
	For $ \mathfrak g= \gln$,  we define $[\lambda]$ to be the set of  maximal subsequences $ x $ of  $ \lambda+\rho $ such that any two entries of $ x $ has an integral difference. 
	
	For $\mathfrak{g}=\spn, \sob $ or $ \sod $, we define $[\lambda] $ to be the set of  maximal subsequences $ x $ of  $ \lambda+\rho $ such that any two entries of $ x $ have an integral  difference or sum. In this case, we set $ [\lambda]_1 $ (resp. $ [\lambda]_2 $) be to the subset of $ [\lambda] $ consisting of sequences with  all entries belonging to $ \mathbb{Z} $ (resp. $ \frac12+\mathbb{Z} $).
	Since there is at most one element in $[\lambda]_1 $ and $[\lambda]_2 $, we denote them by  $(\lambda+\rho)_{(0)}$ and $(\lambda+\rho)_{(\frac{1}{2})}$.
	 We set $[\lambda]_{1,2}=[\lambda]_1\cup [\lambda]_2, \quad [\lambda]_3=[\lambda]\setminus[\lambda]_{1,2}$.
	
\end{definition}

\begin{definition}
		Let  $ x=(\lambda_{i_1}, \lambda_{i_2},\cdots \lambda_{i_r})\in[\lambda]_3 $. Let $  y=(\lambda_{j_1}, \lambda_{j_2},\cdots, \lambda_{j_p}) $ be the maximal subsequence of $ x $ such that $ j_1=i_1 $ and the difference of any two entries of $ y$ is an integer. Let $ z= (\lambda_{k_1}, \lambda_{k_2},\cdots, \lambda_{k_q}) $ be the subsequence obtained by deleting $ y$ from $ x $, which is possible empty. 
	Define
	$$  \tilde{x}=(\lambda_{j_1}, \lambda_{j_2},\cdots, \lambda_{j_p}, -\lambda_{k_q}, -\lambda_{k_{q-1}},\cdots ,-\lambda_{k_1}).  $$
\end{definition}




\begin{Thm}[{\cite[Theorem 5.8]{BXX}}]\label{GKdim}
	The GK dimension of  $ L(\lambda) $  can be computed as follows.
	\begin{enumerate}
		\item If $  \mathfrak{g}= \gln$,
		\[
		\gkd L(\lambda)=\frac{n(n-1)}{2}-\sum _{x\in [\lambda]} F_A(x).
		\]
		
		\item If $  \mathfrak{g}= \mathfrak{sp}(n,\mathbb{C})$, 
		\[
		\gkd L(\lambda)=n^2- F_B((\lambda+\rho)_{(0)}^-)- F_D((\lambda+\rho)_{(\frac{1}{2})}^-)-\sum _{x\in [\lambda]_3} F_A(\tilde{x}).
		\]
		\item  If $  \mathfrak{g} = \mathfrak{so}(2n+1,\mathbb{C}) $,
		\[
		\gkd L(\lambda)=	n^2-F_B((\lambda+\rho)_{(0)}^-)-F_B((\lambda+\rho)_{(\frac{1}{2})}^-)-\sum _{x\in [\lambda]_3} F_A(\tilde{x}).
		\]
		\item  If $  \mathfrak{g} = \mathfrak{so}(2n,\mathbb{C}) $,
		\[
		\gkd L(\lambda)=n^2-n-F_D((\lambda+\rho)_{(0)}^-)-F_D((\lambda+\rho)_{(\frac{1}{2})}^-)-\sum _{x\in [\lambda]_3} F_A(\tilde{x}).
		\]
	\end{enumerate}
\end{Thm}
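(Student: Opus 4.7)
The plan is to reduce to the integral case (Theorem \ref{integral}) via a block decomposition of the integral Weyl group attached to $\lambda$. For any $\lambda \in \card$, let $\Delta_{[\lambda]} := \{\alpha \in \Delta : \langle \lambda + \rho, \alpha^\vee \rangle \in \bbZ\}$ denote the integral root subsystem, with Weyl group $W_{[\lambda]}$. By Jantzen's translation principle together with the general theory of integral Weyl groups, the annihilator $\Ann L(\lambda)$, and hence its associated variety and Gelfand-Kirillov dimension, are controlled by $\lambda$ viewed as an integral weight for the semisimple subalgebra of $\frg$ with root system $\Delta_{[\lambda]}$. Since $\gkd L(\lambda) = \frac12 \dim V(\Ann L(\lambda))$, all of the combinatorics needed to compute $\gkd L(\lambda)$ lives inside $\Delta_{[\lambda]}$.

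The first main step is to identify $\Delta_{[\lambda]}$ as a product of classical irreducible subsystems indexed by blocks of $[\lambda]$. For $\frg = \gln$, each block $x \in [\lambda]$ generates a type $A_{|x|-1}$ subsystem of permutations of those coordinates, so $W_{[\lambda]}$ is a direct product of symmetric groups. For $\frg \in \{\spn, \sob, \sod\}$, the self-dual integer block $(\lambda+\rho)_{(0)}$ and half-integer block $(\lambda+\rho)_{(\frac12)}$ each generate a classical subsystem of the same type (B, C, or D) as $\frg$, because those coordinates are closed under negation up to integral shift. Each remaining block $x \in [\lambda]_3$ pairs with its implicit negated counterpart: the subsequences $y$ and $-z$ (in the notation of the definition of $\tilde{x}$) assemble into a single integral sequence $\tilde{x}$, and the root subsystem they generate is of type $A_{|\tilde x|-1}$.

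The second step applies Theorem \ref{integral} separately to each irreducible factor and sums the deficits. A type-$A$ factor from $x \in [\lambda]$ (for $\gln$) or from $x \in [\lambda]_3$ (for the classical types) contributes $F_A(x)$ or $F_A(\tilde{x})$ respectively. A self-dual factor of type $B/C/D$ produces $F_B$ or $F_D$ applied to the ``doubled'' sequence $((\lambda+\rho)_{(0)})^-$ or $((\lambda+\rho)_{(\frac12)})^-$, matching the $p^{odd}$ or $p^{ev}$ correction in Theorem \ref{integral}. Subtracting all contributions from the number of positive roots $|\Delta^+|$, which equals $\frac{n(n-1)}{2}$, $n^2$, and $n^2-n$ in types $A$, $B/C$, and $D$ respectively, recovers each of the four stated formulas.

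The main obstacle is justifying that the Young tableau shape governing the associated variety factors cleanly across blocks, i.e.\ that the RSK shape of $\lambda+\rho$ (respectively its $^-$-doubling) decomposes as a union of RSK shapes computed block-by-block. This is plausible because entries from distinct blocks have non-integral differences (and non-integral sums in the classical-type case), so no Knuth relation can mix them during insertion. Making this rigorous requires a careful induction on $n$, together with an analysis of how the $^-$-doubling interacts with the pairing of $[\lambda]_3$ blocks and with the self-dual blocks; in particular, one must verify that reversing and negating the $z$-part of each $x \in [\lambda]_3$ to form $\tilde{x}$ produces exactly the correct integral sequence for the type-$A$ factor of $W_{[\lambda]}$. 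This combinatorial stability property of RSK under block decomposition is the technical core of the argument.
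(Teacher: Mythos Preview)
The paper does not prove this statement; it is quoted verbatim as \cite[Theorem 5.8]{BXX} and used as a black box throughout Sections 4--7. There is therefore no ``paper's own proof'' to compare against.

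That said, your outline follows the standard strategy and is broadly on the right track: reduce to the integral root system $\Delta_{[\lambda]}$, decompose it into irreducible classical factors indexed by the blocks of $[\lambda]$, and apply the integral formula (Theorem \ref{integral}) to each factor. A couple of points where your sketch is loose: (i) the claim that $\gkd L(\lambda)$ is governed entirely by data in $\Delta_{[\lambda]}$ is not a formal consequence of Jantzen's translation principle alone---one needs the more refined statement that the associated variety of $L(\lambda)$ is determined by the Kazhdan--Lusztig cell of $\lambda$ in $W_{[\lambda]}$, together with the combinatorial identification of cells with RSK shapes; (ii) the assertion that RSK ``factors cleanly across blocks'' because no Knuth relation mixes entries with non-integral differences is incorrect as stated---RSK insertion certainly does compare and move entries from different blocks, since it only uses the total order on $\bbR$. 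What actually happens is that the \emph{shape} decomposition holds at the level of associated varieties (via the cell structure), not at the level of a single RSK tableau for the concatenated sequence. So the technical core you flag is real, but the mechanism you propose for resolving it would not work; one has to go through the representation-theoretic side rather than pure combinatorics of insertion.
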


The follow lemma is very useful in our proof.
\begin{lem}[{\cite[Theorem 1.1]{BX}}]\label{reducible}
	A scalar generalized Verma module $M_I(\lambda)$ is irreducible if and only if $\emph{GKdim}\:(L(\lambda))=\dim(\mathfrak{u})$.
\end{lem}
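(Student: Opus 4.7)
The strategy is to compute the associated graded of $M_I(\lambda)$ and then apply additivity of the associated cycle. Since $F(\lambda)$ is one--dimensional, the PBW theorem gives $M_I(\lambda)\cong U(\bar\fru)$ as left $U(\bar\fru)$--modules, so with respect to the standard filtration on $U(\frg)$ one obtains
\[
\gr\bigl(M_I(\lambda)\bigr)\cong S(\bar\fru),
\]
viewed as an $S(\frg)$--module through the projection $S(\frg)\twoheadrightarrow S(\frg/\frq)\cong S(\bar\fru)$ (the elements of $\fru$ and $\frl$ act as zero on the associated graded, as a direct PBW calculation shows). In particular $\gkd(M_I(\lambda))=\dim(\bar\fru)=\dim(\fru)$, and the associated cycle is $1\cdot[\bar\fru]$, a single top--dimensional component with multiplicity one. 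The ``only if'' direction is then immediate: irreducibility forces $L(\lambda)=M_I(\lambda)$, and the GK dimensions coincide.

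For the converse I argue by contrapositive. Suppose $M_I(\lambda)$ is reducible, and let $N\subset M_I(\lambda)$ be the maximal proper submodule, giving the short exact sequence
\[
0\to N\to M_I(\lambda)\to L(\lambda)\to 0.
\]
With the induced good filtration, $\gr(N)$ is a nonzero $S(\frg)$--submodule of $S(\bar\fru)$, and since $S(\frg)$ acts through the surjection onto $S(\bar\fru)$, this is the same as a nonzero ideal of the polynomial ring $S(\bar\fru)$. Because $S(\bar\fru)$ is an integral domain, any nonzero ideal has vanishing annihilator, and hence Krull dimension $\dim(\bar\fru)$. Consequently $\gkd(N)=\dim(\fru)$, and $N$ contributes multiplicity at least $1$ to the top--dimensional component of the associated cycle.

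Now I invoke additivity of the associated cycle in the above exact sequence: the multiplicity of $[\bar\fru]$ in the cycle of $M_I(\lambda)$ equals the sum of the multiplicities in the cycles of $N$ and $L(\lambda)$. Since the former equals $1$ and the contribution of $N$ is already at least $1$, the contribution of $L(\lambda)$ must vanish. Thus $V(L(\lambda))$ contains no top--dimensional component of $\bar\fru^*$, so
\[
\gkd\bigl(L(\lambda)\bigr)<\dim(\fru),
\]
completing the contrapositive.

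The main obstacle is the identification $\gr(M_I(\lambda))\cong S(\bar\fru)$ as a rank--one free $S(\bar\fru)$--module, which is precisely where the scalar--type hypothesis is essential: for higher--dimensional $F(\lambda)$ the associated graded becomes a free module of rank $\dim F(\lambda)>1$, so a nonzero submodule no longer needs to have full GK dimension and the above argument breaks. The additivity of the associated cycle, on the other hand, is a standard piece of the Borho--Brylinski / Vogan theory and would simply be cited.
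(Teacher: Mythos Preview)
The paper does not give its own proof of this lemma; it is quoted verbatim from \cite{BX}. Your argument is correct and is the standard one (and presumably close to what appears in \cite{BX}): PBW identifies $\gr M_I(\lambda)$ with $S(\bar\fru)$ as an $S(\frg)$--module (with $\frq$ acting by zero), so the Bernstein degree of $M_I(\lambda)$ in dimension $\dim\fru$ is exactly $1$; any nonzero submodule yields a nonzero ideal in the domain $S(\bar\fru)$, hence has the same top dimension and positive multiplicity; additivity of the associated cycle then forces the quotient $L(\lambda)$ to have strictly smaller GK dimension.

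One minor inaccuracy in your closing remark: even when $\dim F(\lambda)>1$, a nonzero $S(\bar\fru)$--submodule of the free module $S(\bar\fru)^{\dim F(\lambda)}$ still has full GK dimension (any nonzero element generates a cyclic submodule isomorphic to $S(\bar\fru)$, since the ring is a domain). What actually fails in the non--scalar case is the multiplicity count: the total multiplicity of $M_I(\lambda)$ is $\dim F(\lambda)>1$, so a nonzero submodule contributing multiplicity $\ge 1$ no longer forces the quotient's top--dimensional multiplicity to vanish. This does not affect the validity of your proof in the scalar case.
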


\section{Reducibility of scalar  generalized Verma modules for type $A_n$}


From Lemma \ref{scalar} and Lemma \ref{GKdown}, we only need to find the first reducible point of the scalar generalized Verma module $M_{I}(z\xi)$.

First we recall some notation and results from Bai-Xie \cite{BX1}.

\begin{definition}

	We say $\lambda+\rho\in h^{\ast} $ is $(p, q)$-dominant if $\lambda_i-\lambda_j\in\mathbb{Z}_{>0}$ for all $i, j$ such that $1\le i < j\le p $ or $ p + 1\le i < j\le p + q$, where $\lambda+\rho= (\lambda_1,\lambda_2, \dots ,\lambda_n)$. In particular, $\lambda_1 >\lambda_2 >\dots >\lambda_p$ and$ \lambda_{p+1} >\lambda_{p+2} > \dots >\lambda_{p+q}$.
\end{definition}

\begin{lem}[{\cite[Theorem 5.2]{BX}}]\label{dominant}
	Assume that $\lambda\in h^{\ast} $ is $(p, q)$-dominant.
	\begin{enumerate}
		\item If $\lambda_1-\lambda_{p+1}\in\mathbb{Z}$,
		that is, $\lambda$ is an integral weight, then $P(\lambda)$ is a Young tableau
		with at most two columns. And in this case $\emph{GKdim}\:(L(\lambda))= m(n-m)$ where  m is the number of entries in the second column of P($\lambda$).	
		
		\item  If $\lambda_1-\lambda_{p+1}\notin\mathbb{Z}$,
		then $P(\lambda)$ consists of two Young tableaux with single column, and in this case $\emph{GKdim}\:(L(\lambda))= pq$.
		
	\end{enumerate}
\end{lem}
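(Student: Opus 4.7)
The plan is to apply the GK dimension formulas from Theorems~\ref{integral} and~\ref{GKdim}, treating the integral and non-integral cases separately, after identifying the shape produced by the Robinson--Schensted correspondence on $\lambda+\rho$. By $(p,q)$-dominance, $\lambda+\rho$ is the concatenation of two strictly decreasing sequences of lengths $p$ and $q$.

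For part (1), I would first invoke Schensted's theorem: the number of columns of $P(\lambda+\rho)$ equals the length of the longest weakly increasing subsequence of $\lambda+\rho$, and this is at most $2$ because each of the two strictly decreasing blocks contributes at most one entry to any such subsequence. Hence the shape of $P(\lambda+\rho)$ must be $(2^{m},1^{n-2m})$, where $m$ is the height of the second column. Plugging $p_i=2$ for $1\le i\le m$ and $p_i=1$ for $m<i\le n-m$ into the formula of Theorem~\ref{integral} reduces the claim to the identity
\begin{equation*}
\frac{n(n-1)}{2}-\frac{m(m-1)}{2}-\frac{(n-m)(n-m-1)}{2}=m(n-m),
\end{equation*}
which is a one-line algebraic check.

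For part (2), the assumption $\lambda_1-\lambda_{p+1}\notin\mathbb{Z}$ combined with $(p,q)$-dominance forces $\lambda_i-\lambda_j\notin\mathbb{Z}$ whenever $1\le i\le p<j\le p+q$ (write the difference as a telescoping sum of two integer differences plus $\lambda_1-\lambda_{p+1}$). Hence $[\lambda]$ consists of exactly the two blocks $x_1=(\lambda_1,\ldots,\lambda_p)$ and $x_2=(\lambda_{p+1},\ldots,\lambda_{p+q})$. Each $x_i$ is strictly decreasing, so $P(x_i)$ is a single column of height $p$ or $q$ respectively, giving $F_A(x_1)=\frac{p(p-1)}{2}$ and $F_A(x_2)=\frac{q(q-1)}{2}$. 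Substituting into Theorem~\ref{GKdim}(1) produces
\begin{equation*}
\gkd L(\lambda)=\frac{(p+q)(p+q-1)}{2}-\frac{p(p-1)}{2}-\frac{q(q-1)}{2}=pq
\end{equation*}
after the obvious cancellation.

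The main obstacle, such as it is, is the Schensted step in part (1) that caps the number of columns at $2$; once the shape of $P(\lambda+\rho)$ is pinned down, both parts collapse to arithmetic manipulations of the formulas already recorded in Section~3. No new machinery beyond RSK and the Bai--Xiao--Xie algorithms is required.
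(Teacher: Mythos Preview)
The paper does not give its own proof of this lemma; it is quoted from \cite[Theorem~5.2]{BX} and used as a black box. So there is nothing in the paper to compare against, and your task is really to supply an independent argument. Your proposal does this correctly: the Schensted bound on the number of columns, together with Theorem~\ref{integral} for the integral case and Theorem~\ref{GKdim}(1) for the non-integral case, yields both formulas by the elementary identities you wrote down. The telescoping argument showing that $\lambda_i-\lambda_j\notin\mathbb{Z}$ across the two blocks in part~(2) is exactly what is needed to conclude $[\lambda]=\{x_1,x_2\}$.

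One small point worth making explicit: in part~(1) you should note that ``integral'' for $\mathfrak{sl}(n,\mathbb{C})$ means precisely that all pairwise differences $\lambda_i-\lambda_j$ lie in $\mathbb{Z}$, which is what $(p,q)$-dominance together with $\lambda_1-\lambda_{p+1}\in\mathbb{Z}$ gives you; this is why Theorem~\ref{integral} applies. Otherwise the argument is complete.
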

\begin{Rem}
	In this paper, the element $i$ in a Young tableau represents the i-th component of $\lambda+\rho$ $\:or \:(\lambda+\rho)^-$. For example, the Young tableau for $\lambda+\rho=(2,0,3,1)$ is 	\[Y=
		\tiny{\begin{tikzpicture}[scale=\domscale+0.1,baseline=-19pt]
	\hobox{0}{0}{0}
	\hobox{1}{0}{1}
	\hobox{0}{1}{2}
	\hobox{1}{1}{3}
	\end{tikzpicture}}.
	\]
	We use \[\tiny{\begin{tikzpicture}[scale=\domscale+0.1,baseline=-19pt]
	\hobox{0}{0}{2}
	\hobox{1}{0}{4}
	\hobox{0}{1}{1}
	\hobox{1}{1}{3}
	\end{tikzpicture}}\]
	to represent our Young tableau $Y$.
\end{Rem}
\begin{prop}
		Let $ \mathfrak{g}=\mathfrak{sl}(n,\mathbb{C}) $. $M_{I}(\lambda)$ is a scalar generalized Verma module with highest weight $\lambda=z\xi$, where $\xi=\xi_p$ is the fundamental weight corresponding to $\alpha_{p}=e_p-e_{p+1}$. Then $M_{I}(\lambda)$ is reducible if and only if $z\in 1-\min\{p,n-p\}+\mathbb{Z}_{\ge 0}$.
\end{prop}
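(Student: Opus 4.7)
The plan is to apply Lemma~\ref{reducible}: I would compute $\GK L(\lambda)$ via Lemma~\ref{dominant} and compare it with $\dim\fru=p(n-p)$. Taking $\rho=(n-1,n-2,\dots,1,0)$ and $\xi_p=e_1+\cdots+e_p$, one obtains
\[
\lambda+\rho=(z+n-1,\,z+n-2,\dots,z+n-p,\;n-p-1,\dots,1,0),
\]
which is manifestly $(p,n-p)$-dominant. Set $\mu:=\min\{p,n-p\}$.

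If $z\notin\bbZ$, then $\lambda_1-\lambda_{p+1}=z+p\notin\bbZ$, so Lemma~\ref{dominant}(2) gives $\GK L(\lambda)=p(n-p)=\dim\fru$, and Lemma~\ref{reducible} forces $M_I(\lambda)$ to be irreducible; this is consistent with the asserted reducibility set lying in $\bbZ$.

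Suppose now $z\in\bbZ$. Lemma~\ref{dominant}(1) says $P(\lambda+\rho)$ has shape $(2^m,1^{n-2m})$ and $\GK L(\lambda)=m(n-m)$, so everything reduces to computing $m$. I would use Greene's theorem: the first column length $n-m$ equals the longest strictly decreasing subsequence of $\lambda+\rho$. A case analysis of the two strictly decreasing blocks---the first of length $p$ with values in $\{z+n-p,\dots,z+n-1\}$ and the second of length $n-p$ with values in $\{0,\dots,n-p-1\}$---gives the following. When $z\geq 0$ the two value ranges are disjoint and block~1 sits strictly above block~2, so the whole sequence is strictly decreasing and $m=0$. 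When $z=-k$ with $k\geq 1$, the value sets overlap in $\min\{k,p,n-p\}$ integers, hence the number of distinct values in $\lambda+\rho$ is $n-\min\{k,p,n-p\}$; this bounds the longest strictly decreasing subsequence from above, and one realizes the bound explicitly by taking all of block~1 followed by those entries of block~2 strictly below the minimum of block~1. Consequently $m=\min\{\max(-z,0),\,p,\,n-p\}\leq\mu$.

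Since $m(n-m)$ is strictly increasing for $m\in[0,n/2]$ and $m\leq\mu\leq n/2$, one has $\GK L(\lambda)=m(n-m)=p(n-p)$ iff $m=\mu$. Lemma~\ref{reducible} then yields reducibility iff $m<\mu$, iff $\max(-z,0)<\mu$, iff $z\geq 1-\mu$. Together with the non-integral case this is exactly $z\in 1-\mu+\bbZ_{\geq 0}$, as claimed. The main obstacle is the combinatorial step identifying the longest strictly decreasing subsequence in the integral case; once that pigeonhole bound and the matching explicit construction are in place, the reducibility criterion drops out of the quadratic $m(n-m)$.
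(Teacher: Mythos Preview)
Your argument is correct and reaches the same conclusion as the paper, but by a genuinely different route. The paper, after disposing of the non-integral case exactly as you do, proceeds by brute force: assuming $p\le n-p$, it runs the Robinson--Schensted insertion explicitly at the sample points $z=-1,-2,\dots,1-p,-p$, reads off $m=1,2,\dots,p-1,p$ from the resulting tableaux, and then invokes the monotonicity Lemma~\ref{GKdown} to conclude that $z=1-p$ is the first reducible point. You instead appeal to Greene's theorem to identify $n-m$ with the length of the longest strictly decreasing subsequence and extract the closed formula $m=\min\{\max(-z,0),p,n-p\}$ in one stroke; this bypasses both the case-by-case RSK computations and any need for Lemma~\ref{GKdown}. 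Your approach is cleaner and immediately yields the full Gelfand--Kirillov dimension formula of the paper's Section~7 for type~$A$ as a by-product, whereas the paper's approach is more elementary in that it needs no appeal to Greene's theorem.

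One small point: your explicit realization of the upper bound (``all of block~1 followed by the tail of block~2 below its minimum'') produces a subsequence of length $n-k$, which matches the bound $n-\min\{k,p,n-p\}$ only when $k\le\min\{p,n-p\}$. In the remaining range $k>\min\{p,n-p\}$ the bound is $\max\{p,n-p\}$ and is realized simply by the longer of the two blocks taken alone. This is a trivial patch and does not affect your conclusion.
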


\begin{proof}
	Take $\mathfrak{g}=\mathfrak{sl}(\frn,\mathbb{C})$, and $\Delta^+(\frl) = \{\alpha_1,\dots,\alpha_{p-1},\alpha_{p+1},\dots,\alpha_n\}$, $\alpha_p=e_p-e_{p+1}$.
	
	When $M_I(\lambda)$ is of scalar type, we know that $\lambda=z\xi $ for some $z\in\mathbb{R}$ by Lemma \ref{scalar},
	and $\xi=(\underbrace{a,\dots,a}_{p},\underbrace{a-1,\dots,a-1}_{n-p})$ for $a=\frac{n-p}{n}$.
	
	In \cite{EHW}, we know that $2\rho=(n-1,n-3,\dots,-n+3,-n+1)$, thus
	$$\rho=(\frac{n-1}{2},\frac{n-3}{2},\dots,\frac{-n+3}{2},\frac{-n+1}{2}),$$
	$$\lambda+\rho=(\underbrace{za+\frac{n-1}{2}\dots,za+\frac{n-2p+1}{2}}_{p},\underbrace{(a-1)z+\frac{n-2p-1}{2},\dots,(a-1)z+\frac{-n+1}{2}}_{n-p}).$$

 When $za+\frac{n-2p+1}{2}-(a-1)z\frac{n-2p-1}{2}=z+1\notin \mathbb{Z}$, we will have $z\notin \mathbb{Z}$, and
	\begin{align*}
	\rm{GKdim}\:(L(\lambda))&=\frac{n(n-1)}{2}-\frac{p(p-1)}{2}-\frac{(n-p)(n-p-1)}{2}\\&=-\frac{p^2}{2}+np-\frac{p^2}{2}=np-p^2=p(n-p)=\dim(\mathfrak{u}).
	\end{align*}
	By Lemma \ref{reducible} we obtain that $M_{I}(\lambda)$ is irreducible.
	
 When $za+\frac{n-2p+1}{2}-(a-1)z\frac{n-2p-1}{2}=z+1\in \mathbb{Z}$, in other words, $z\in \mathbb{Z}$,
	$z\xi+\rho=\lambda +\rho$ is $(p,q)$-dominant. By Lemma \ref{dominant}, we know that ${\rm GKdim}\:(L(\lambda)) = m(n-m)$, where $ m $ is the number of entries in the second column of $P(\lambda)$.
	
 When $m=\min\{p,n-p\}$, ${\rm GKdim}\:(L(\lambda)) = m(n-m)=p(n-p)=\dim(\mathfrak{u})$, in this case $M_{I}(\lambda)$ is irreducible.
	
 When $m(n-m)<p(n-p)=\dim(\mathfrak{u})\Rightarrow (m-p)n<(m+p)(m-p)$,
	in this case $M_{I}(\lambda)$ is reducible. In other words, when $m<\min\{p,n-p\}$, $M_{I}(\lambda)$ is reducible. Suppose that $p\le n-p$.
	\begin{enumerate}
		\item When $z=-1$, we will have
		\[
		\tiny{\begin{tikzpicture}[scale=\domscale+0.25,baseline=-45pt]
		\hobox{0}{0}{p}
		\hobox{0}{1}{p-1}
		\hobox{0}{2}{\vdots}
		\hobox{0}{3}{1} 
		\end{tikzpicture}}\to 
		\tiny{\begin{tikzpicture}[scale=\domscale+0.25,baseline=-45pt]
		\hobox{0}{0}{p}
		\hobox{1}{0}{p+1}
		\hobox{0}{1}{p-1}
		\hobox{0}{2}{\vdots}
		\hobox{0}{3}{1} 
		\end{tikzpicture}}\dashrightarrow 
		\tiny{\begin{tikzpicture}[scale=\domscale+0.25,baseline=-45pt]
		\hobox{0}{0}{n}
		\hobox{1}{0}{p+1}
		\hobox{0}{1}{\vdots}
		\hobox{0}{2}{\vdots}
		\hobox{0}{3}{1} 
		\end{tikzpicture}}=P(\lambda).
		\]
		
		So $m=1<p$, in this case $M_{I}(\lambda)$ is reducible.
		
		\item When  $z=-2$, we will have
		\[
		\tiny{\begin{tikzpicture}[scale=\domscale+0.25,baseline=-45pt]
		\hobox{0}{0}{p}
		\hobox{0}{1}{p-1}
		\hobox{0}{2}{\vdots}
		\hobox{0}{3}{1} 
		\end{tikzpicture}}\to 
		\tiny{\begin{tikzpicture}[scale=\domscale+0.25,baseline=-45pt]
		\hobox{0}{0}{p}
		\hobox{1}{0}{p+1}
		\hobox{0}{1}{p-1}
		\hobox{0}{2}{\vdots}
		\hobox{0}{3}{1}
		\end{tikzpicture}}\to 
		\tiny{\begin{tikzpicture}[scale=\domscale+0.25,baseline=-45pt]
		\hobox{0}{0}{p}
		\hobox{1}{0}{p+2}
		\hobox{0}{1}{p-1}
		\hobox{1}{1}{p+1}
		\hobox{0}{2}{\vdots}
		\hobox{0}{3}{1}  
		\end{tikzpicture}}\dashrightarrow 
		\tiny{\begin{tikzpicture}[scale=\domscale+0.25,baseline=-45pt]
		\hobox{0}{0}{n}
		\hobox{1}{0}{p+2}
		\hobox{0}{1}{n-1}
		\hobox{1}{1}{p+1}
		\hobox{0}{2}{\vdots}
		\hobox{0}{3}{1}  
		\end{tikzpicture}}=P(\lambda).
		\]
		
		Hence $m=2<p$, in this case $M_{I}(\lambda)$ is reducible.
		
		\item When $z=1-p$, we will have
		\[
		\tiny{\begin{tikzpicture}[scale=\domscale+0.25,baseline=-45pt]
		\hobox{0}{0}{p}
		\hobox{0}{1}{p-1}
		\hobox{0}{2}{\vdots}
		\hobox{0}{3}{1} 
		\end{tikzpicture}}\to 
		\tiny{\begin{tikzpicture}[scale=\domscale+0.25,baseline=-45pt]
		\hobox{0}{0}{p}
		\hobox{1}{0}{p+1}
		\hobox{0}{1}{p-1}
		\hobox{0}{2}{\vdots}
		\hobox{0}{3}{1}
		\end{tikzpicture}}\to 
		\tiny{\begin{tikzpicture}[scale=\domscale+0.25,baseline=-45pt]
		\hobox{0}{0}{p}
		\hobox{1}{0}{p+2}
		\hobox{0}{1}{p-1}
		\hobox{1}{1}{p+1}
		\hobox{0}{2}{\vdots}
		\hobox{0}{3}{1}  
		\end{tikzpicture}}\to 
		\tiny{\begin{tikzpicture}[scale=\domscale+0.25,baseline=-45pt]
		\hobox{0}{0}{p}
		\hobox{1}{0}{p+3}
		\hobox{0}{1}{p-1}
		\hobox{1}{1}{p+2}
		\hobox{0}{2}{\vdots}
		\hobox{0}{3}{1} 
		\end{tikzpicture}}\dashrightarrow 
		\tiny{\begin{tikzpicture}[scale=\domscale+0.25,baseline=-45pt]
		\hobox{0}{0}{p}
		\hobox{1}{0}{2p-1}
		\hobox{0}{1}{p-1}
		\hobox{1}{1}{2p-2}
		\hobox{0}{2}{\vdots}
		\hobox{1}{2}{\vdots}
		\hobox{0}{3}{2} 
		\hobox{1}{3}{p+1} 
		\hobox{0}{4}{1}
		\end{tikzpicture}}=P(\lambda).
		\]
		
		Thus $m=(2p-1-(p+1)+1)=p-1<p$, in this case $M_{I}(\lambda)$ is reducible.
		
		\item When $z=-p$, we will have
		\[
	\tiny{\begin{tikzpicture}[scale=\domscale+0.25,baseline=-45pt]
		\hobox{0}{0}{p}
		\hobox{0}{1}{p-1}
		\hobox{0}{2}{\vdots}
		\hobox{0}{3}{1} 
		\end{tikzpicture}}\to 
		\tiny{\begin{tikzpicture}[scale=\domscale+0.25,baseline=-45pt]
		\hobox{0}{0}{p}
		\hobox{1}{0}{p+1}
		\hobox{0}{1}{p-1}
		\hobox{0}{2}{\vdots}
		\hobox{0}{3}{1}
		\end{tikzpicture}}\to 
		\tiny{\begin{tikzpicture}[scale=\domscale+0.25,baseline=-45pt]
		\hobox{0}{0}{p}
		\hobox{1}{0}{p+2}
		\hobox{0}{1}{p-1}
		\hobox{1}{1}{p+1}
		\hobox{0}{2}{\vdots}
		\hobox{0}{3}{1}  
		\end{tikzpicture}}\to 
		\tiny{\begin{tikzpicture}[scale=\domscale+0.25,baseline=-45pt]
		\hobox{0}{0}{p}
		\hobox{1}{0}{p+3}
		\hobox{0}{1}{p-1}
		\hobox{1}{1}{p+2}
		\hobox{0}{2}{\vdots}
		\hobox{0}{3}{1} 
		\end{tikzpicture}}\dashrightarrow 
		\tiny{\begin{tikzpicture}[scale=\domscale+0.25,baseline=-45pt]
		\hobox{0}{0}{p}
		\hobox{1}{0}{2p}
		\hobox{0}{1}{p-1}
		\hobox{1}{1}{2p-1}
		\hobox{0}{2}{\vdots}
		\hobox{1}{2}{\vdots}
		\hobox{0}{3}{2} 
		\hobox{1}{3}{p+1} 
		\hobox{0}{4}{1}
		\end{tikzpicture}}=	P(\lambda).
		\]
	\end{enumerate}
	Thus $m=(2p-(p+1)+1)=p$, in this case $M_{I}(\lambda)$ is irreducible.
	
	So, $z=1-p$ is the first reducible point. Similarly if $p\ge n-p$, $z=p-n+1$ is the first reducible point. 
	Hence for  $ \mathfrak{g}=\mathfrak{sl}(n, \mathbb{C}) $, $M_{I}(\lambda)$ is reducible if and only if $z\in 1-\min\{p,n-p\}+\mathbb{Z}_{\ge 0}$.
\end{proof}

\begin{example}
	Let $\frg=\frsl(3, \mathbb{C})$, We consider the parabolic subalgebra $\frq$ corresponding to the subset $\Pi\setminus\{\alpha_1\}$.
	In this case, we have $p=1$ and $n=3$. So the scalar generalized Verma module $M_{I}(\lambda)$ (with highest weight $\lambda=z\xi_1$) is reducible if and only if 
	$z\in 1-\min\{1,2\}+\mathbb{Z}_{\ge 0}=\mathbb{Z}_{\ge 0}$. This result is the same with \cite{He} and \cite{KT}.

%
\end{example}

\section{Reducibility of scalar  genralized Verma modules for types $B_n$ and $C_n$}
In this section, we consider the types $B_n$ and $C_n$. Let $M_{I}(\lambda)$ be a scalar generalized Verma module with highest weight $\lambda=z\xi$, where $\xi=\xi_p$ is the fundamental weight corresponding to the simple root $\alpha_{p}$.

\begin{prop}\label{B}
	Let $ \mathfrak{g}=\mathfrak{so}(2n+1,\mathbb{C}) $. $M_{I}(\lambda)$ is reducible if and only if	  
	
	\begin{enumerate}
		\item if $p=\mathrm{1}$, then
		
		$\quad z\in (\frac{3}{2}-n+\mathbb{Z}_{\ge 0})\cup(\mathbb{Z}_{\ge 0})$;		
		\item if $\mathrm{2}\le p\le n-1$, then
		\[	z\in\left\{
		\begin{array}{ll}
		1-n+\frac{p}{2}+\frac{1}{2}\mathbb{Z}_{\ge 0} &\textnormal{if $p$ is even or}\:\:3p<2n+1\\	     	  	   
		\frac{1}{2}-n+\frac{p}{2}+\frac{1}{2}\mathbb{Z}_{\ge 0} &\textnormal{if $p$ is odd and}\:\:3p\ge 2n+1	
		\end{array}	
		\right.;
		\]	
		\item if $p=n$, then
		\[	z\in\left\{
		\begin{array}{ll}
		2-n+\mathbb{Z}_{\ge 0} &\textnormal{if $n$ is  even}\\	     	  	   
		1-n+\mathbb{Z}_{\ge 0} &\textnormal{if $n$ is odd}\\	
		\end{array}	
		\right..
		\]
	\end{enumerate}		     
	
\end{prop}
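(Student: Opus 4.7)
The plan is to parallel the type-$A$ argument of the preceding proposition. By Lemma~\ref{reducible}, reducibility of $M_I(z\xi_p)$ is equivalent to $\gkd L(z\xi_p)<\dim\mathfrak{u}$, and by Lemma~\ref{GKdown} the reducible set of $z$ is closed under $z\mapsto z+1$. So, working case by case in $p$, it suffices to determine the minimal reducibility point within each arithmetic class of $z$, using the value of $\dim\mathfrak{u}$ recorded in Section~2 (namely $2n-1$, $2np-\tfrac{3}{2}p^2+\tfrac{1}{2}p$, or $\tfrac{n(n+1)}{2}$ in the three sub-cases).

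First I compute $\lambda+\rho$ explicitly: writing $\rho=(n-\tfrac{1}{2},\ldots,\tfrac{1}{2})$ and $\xi_p=e_1+\cdots+e_p$ for $p<n$ (with $\xi_n=\tfrac{1}{2}(e_1+\cdots+e_n)$), one finds that for $p<n$ the weight $\lambda+\rho$ decomposes as a ``first block'' $(z+n-\tfrac{1}{2},\ldots,z+n-p+\tfrac{1}{2})$ and a ``last block'' $(n-p-\tfrac{1}{2},\ldots,\tfrac{1}{2})$, and analogously (with $z$ halved) when $p=n$. The structure of $[\lambda]$ now depends only on the class of $z$ modulo $\tfrac{1}{2}\bbZ$: if $z\notin\tfrac{1}{2}\bbZ$ the two blocks decouple as $[\lambda]_3$ and $[\lambda]_2$; if $z\in\tfrac{1}{2}+\bbZ$ they split as $[\lambda]_1$ and $[\lambda]_2$; if $z\in\bbZ$ they merge into a single half-integer class. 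In the first (generic) case Theorem~\ref{GKdim} applied to strictly decreasing inputs reproduces $\gkd L(\lambda)=\dim\mathfrak{u}$, so $M_I(\lambda)$ is irreducible. In the other two cases I determine the Young tableau shape of the relevant $^-$-symmetrized sequence by walking the Robinson--Schensted algorithm in the pictorial style of the preceding $A_{n-1}$ proof, and then evaluate $F_B$ or $F_A$ via~(\ref{eq:ev-od}). Running a decreasing induction on $z$ within each arithmetic class, I identify the first $z$ at which an extra box breaks the generic single-column shape; this pinpoints the desired minimal reducibility point.

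The main obstacle is case~(2), $2\le p\le n-1$, where the threshold splits according to the parity of $p$ and to $3p$ versus $2n+1$. The parity of $p$ dictates whether the first extra box lands in an even- or odd-indexed row of the symmetrized Young diagram, a distinction to which $F_B$ is sensitive through $p_k^{ev}$; the comparison $3p$ versus $2n+1$ arises from matching the length $2(n-p)$ of the symmetrized last-block column against the combined length $2p$ of the first-block entries together with their negatives, and determines whether the critical box is produced by ``first block colliding with last block'' or by ``first block colliding with its own reflection.'' After this bookkeeping, cases~(1) and (3) follow as simpler specializations: for $p=1$ the single first-block entry $z+n-\tfrac{1}{2}$ yields two disjoint reducibility rays, from collisions with the last-block entries (giving $\bbZ_{\ge 0}$) and with their negatives (giving $\tfrac{3}{2}-n+\bbZ_{\ge 0}$); for $p=n$ the single length-$n$ block doubles to length $2n$ under $^-$, and the parity of $n$ controls the row index of the first extra box, producing the $2-n$ versus $1-n$ shift in the threshold.
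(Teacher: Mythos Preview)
Your proposal is correct and follows essentially the same approach as the paper: split by the value of $p$, then by the arithmetic class of $z$ modulo $\tfrac{1}{2}\mathbb{Z}$, compute the relevant Young tableau shapes via Robinson--Schensted on $(\lambda+\rho)^-$, and use Lemma~\ref{GKdown} together with Lemma~\ref{reducible} to locate the first reducible point in each class. The paper carries this out by explicitly walking through specific values of $z$ and drawing the tableaux, whereas your sketch packages the same computation more conceptually (e.g.\ your explanation of the $3p$ vs.\ $2n+1$ threshold as a comparison of block lengths), but the underlying argument is the same.
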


\begin{proof}
	Take $\mathfrak{g}=\mathfrak{so}(2n+1,\mathbb{C})$, and $\Delta^+(\frl)=\{\alpha_1,\dots,\alpha_{n-1}\}$, 
	where
	$\alpha_i=e_i-e_{i+1}\:\:1\le i\le n-1$ and $\alpha_n=e_n$. When $M_I(\lambda)$ is of scalar type, we know that $\lambda=z\xi $ for some $z\in\mathbb{R}$ by Lemma \ref{scalar}.
	
	By the definition of fundamental weight, we can compute
	that $\xi=(\frac{1}{2},\frac{1}{2},\dots,\frac{1}{2})$,
	then
	$$\lambda+\rho=(\frac{1}{2}z+n-\frac{1}{2},\frac{1}{2}z+n-\frac{3}{2},\dots,\frac{1}{2}z+\frac{1}{2}),$$
	$$(\lambda+\rho)^-=(\frac{1}{2}z+n-\frac{1}{2},\dots,\frac{1}{2}z+\frac{1}{2},-\frac{1}{2}z-\frac{1}{2},\dots,-\frac{1}{2}z+\frac{1}{2}-n).$$ 
	When $n$ is even, we will have the follows.
	\begin{enumerate}
		\item If $z\in \mathbb{Z}$, then
		
		$(\lambda+\rho)^-$ is integral, by Lemma \ref{integral}, we know that
		$${\rm GKdim}\:L(\lambda)=n^2-\sum\limits_{i\ge 1}(i-1)p((\lambda+\rho)^-)_i^{odd}.$$
		When $\frac{1}{2}z+\frac{1}{2}>-\frac{1}{2}z-\frac{1}{2}\Rightarrow z>-1$, we have
		\begin{align*}
		p((\lambda+\rho)^-)=(\underbrace{1,1,\dots,1)}_{2n}\:\:{\rm and}\:\:
		p((\lambda+\rho)^-)^{odd}=(\underbrace{0,1,0,1,\dots,0,1)}_{2n}.
		\end{align*}
	Thus,	\begin{align}\label{0}
		{\rm GKdim}\:L(\lambda)\notag&=n^2-\sum\limits_{i\ge 1}(i-1)p((\lambda+\rho)^-)_i^{odd}\notag\\&=n^2-(0\cdot 0+1\cdot 1+2\cdot 0+3\cdot 1+4\cdot 0+\dots +(2n-2)\cdot 0+(2n-1)\cdot 1)\notag\\&=n^2-(1+3+\dots+2n-1)\notag\\&=n^2-\frac{2n\cdot n}{2}=0<\dim(\mathfrak{u}).
		\end{align}
		By Lemma \ref{reducible} we know that $M_{I}(\lambda)$ is reducible.
		
		\begin{enumerate}
			\item When $z=-1$, we will have		
			\[
	\tiny{\begin{tikzpicture}[scale=\domscale+0.25,baseline=-45pt]
			\hobox{0}{0}{n}
			\hobox{0}{1}{n-1}
			\hobox{0}{2}{\vdots}
			\hobox{0}{3}{1} 
			\end{tikzpicture}}\to 
			\tiny{\begin{tikzpicture}[scale=\domscale+0.25,baseline=-45pt]
			\hobox{0}{0}{n}
			\hobox{1}{0}{n+1}
			\hobox{0}{1}{n-1}
			\hobox{0}{2}{\vdots}
			\hobox{0}{3}{1} 
			\end{tikzpicture}}\dashrightarrow 
			\tiny{\begin{tikzpicture}[scale=\domscale+0.25,baseline=-45pt]
			\hobox{0}{0}{2n}
			\hobox{1}{0}{n+1}
			\hobox{0}{1}{2n-1}
			\hobox{0}{2}{\vdots}
			\hobox{0}{3}{1} 
			\end{tikzpicture}}=P(\lambda).
			\]
		So	$p((\lambda+\rho)^-)=(2,\underbrace{1,1,\dots,1)}_{2n-2}$ and
			$p((\lambda+\rho)^-)^{odd}=(\underbrace{1,1,0,\dots,1,0)}_{2n-1}$. 
			\begin{align*}
			{\rm GKdim}\:L(\lambda)&=n^2-\sum\limits_{i\ge 1}(i-1)p((\lambda+\rho)^-)_i^{odd}\\&=n^2-(0\cdot 1+1\cdot 1+2\cdot 0+3\cdot 1+\dots +(2n-3)\cdot 1+(2n-2)\cdot 0)\\&=n^2-(1+3+\dots+2n-3)\\&=n^2-n^2+2n-1=2n-1<\dim(\mathfrak{u}).
			\end{align*}	
			By Lemma \ref{reducible} we know that $M_{I}(\lambda)$ is reducible.
			
			\item When $z=-2$, we will have
			\[
	\tiny{\begin{tikzpicture}[scale=\domscale+0.25,baseline=-45pt]
			\hobox{0}{0}{n}
			\hobox{0}{1}{n-1}
			\hobox{0}{2}{\vdots}
			\hobox{0}{3}{1} 
			\end{tikzpicture}}\to 
			\tiny{\begin{tikzpicture}[scale=\domscale+0.25,baseline=-45pt]
			\hobox{0}{0}{n}
			\hobox{1}{0}{n+1}
			\hobox{0}{1}{n-1}
			\hobox{0}{2}{\vdots}
			\hobox{0}{3}{1} 
			\end{tikzpicture}}\to 
			\tiny{\begin{tikzpicture}[scale=\domscale+0.25,baseline=-45pt]
			\hobox{0}{0}{n}
			\hobox{1}{0}{n+2}
			\hobox{0}{1}{n-1}
			\hobox{1}{1}{n+1}
			\hobox{0}{2}{\vdots}
			\hobox{0}{3}{1}
			\end{tikzpicture}}\to 
			\tiny{\begin{tikzpicture}[scale=\domscale+0.25,baseline=-45pt]
			\hobox{0}{0}{n+3}
			\hobox{1}{0}{n+2}
			\hobox{0}{1}{n}
			\hobox{1}{1}{n+1}
			\hobox{0}{2}{\vdots}
			\hobox{0}{3}{1}    
			\end{tikzpicture}}\dashrightarrow 
			\tiny{\begin{tikzpicture}[scale=\domscale+0.25,baseline=-45pt]
			\hobox{0}{0}{2n}
			\hobox{1}{0}{n+2}
			\hobox{0}{1}{2n-1}
			\hobox{1}{1}{n+1}
			\hobox{0}{2}{\vdots}
			\hobox{0}{3}{1} 
			\end{tikzpicture}}=P(\lambda).
			\]
		So	$p((\lambda+\rho)^-)^{odd}=(\underbrace{1,1,0,1,0,1\dots,0,1)}_{2n-2}$, and
			\begin{align*}
			{\rm GKdim}\:L(\lambda)&=n^2-\sum\limits_{i\ge 1}(i-1)p((\lambda+\rho)^-)_i^{odd}\\&=n^2-(0\cdot 1+1\cdot 1+2\cdot 0+3\cdot 1+\dots +(2n-4)\cdot 0+(2n-3)\cdot 1)\\&=n^2-(1+3+\dots+2n-3)\\&=n^2-n^2+2n-1=2n-1<\dim(\mathfrak{u}).
			\end{align*}
			By Lemma \ref{reducible} we know that $M_{I}(\lambda)$ is reducible.
			
			\item When $z=2-n$, we will have
			\[
		\tiny{\begin{tikzpicture}[scale=\domscale+0.25,baseline=-45pt]
			\hobox{0}{0}{n}
			\hobox{0}{1}{n-1}
			\hobox{0}{2}{\vdots}
			\hobox{0}{3}{1} 
			\end{tikzpicture}}\to 
			\tiny{\begin{tikzpicture}[scale=\domscale+0.25,baseline=-45pt]
			\hobox{0}{0}{n}
			\hobox{1}{0}{n+1}
			\hobox{0}{1}{n-1}
			\hobox{0}{2}{\vdots}
			\hobox{0}{3}{1} 
			\end{tikzpicture}}\to 
			\tiny{\begin{tikzpicture}[scale=\domscale+0.25,baseline=-45pt]
			\hobox{0}{0}{n}
			\hobox{1}{0}{n+2}
			\hobox{0}{1}{n-1}
			\hobox{1}{1}{n+1}
			\hobox{0}{2}{\vdots}
			\hobox{0}{3}{1}
			\end{tikzpicture}}\to 
			\tiny{\begin{tikzpicture}[scale=\domscale+0.25,baseline=-45pt]
			\hobox{0}{0}{n}
			\hobox{1}{0}{2n-3}
			\hobox{0}{1}{n-1}
			\hobox{1}{1}{2n-4}
			\hobox{0}{2}{\vdots}
			\hobox{1}{2}{\vdots}
			\hobox{0}{3}{3}
			\hobox{1}{3}{n+1}
			\hobox{0}{4}{2}
			\hobox{0}{5}{1}    
			\end{tikzpicture}}\dashrightarrow 
			\tiny{\begin{tikzpicture}[scale=\domscale+0.25,baseline=-45pt]
			\hobox{0}{0}{2n}
			\hobox{1}{0}{2n-2}
			\hobox{0}{1}{2n-1}
			\hobox{1}{1}{2n-3}
			\hobox{0}{2}{n}
			\hobox{1}{2}{2n-4}
			\hobox{0}{3}{\vdots}
			\hobox{1}{3}{\vdots} 
			\hobox{0}{4}{2}
			\hobox{1}{4}{n+1}
			\hobox{0}{5}{1}
			\end{tikzpicture}}=P(\lambda).
			\]
			So $p((\lambda+\rho)^-)^{odd}=(\underbrace{1,1,1,\dots,1}_{n-2},0,1,0,1)$, and
			\begin{align*}
			{\rm GKdim}\:L(\lambda)&=n^2-\sum\limits_{i\ge 1}(i-1)p((\lambda+\rho)^-)_i^{odd}\\&=n^2-(1+2+3+\dots +n-3+n-1+n+1)\\&=n^2-\frac{(n-2)(n-3)}{2}-2n\\&=\frac{1}{2}n^2+\frac{1}{2}n-3<\dim(\mathfrak{u}).
			\end{align*}
			By Lemma \ref{reducible} we know that $M_{I}(\lambda)$ is reducible.
			
			\item When $z=1-n$, we will have
			\[
		\tiny{\begin{tikzpicture}[scale=\domscale+0.25,baseline=-45pt]
			\hobox{0}{0}{n}
			\hobox{0}{1}{n-1}
			\hobox{0}{2}{\vdots}
			\hobox{0}{3}{1} 
			\end{tikzpicture}}\to 
			\tiny{\begin{tikzpicture}[scale=\domscale+0.25,baseline=-45pt]
			\hobox{0}{0}{n}
			\hobox{1}{0}{n+1}
			\hobox{0}{1}{n-1}
			\hobox{0}{2}{\vdots}
			\hobox{0}{3}{1} 
			\end{tikzpicture}}\to 
			\tiny{\begin{tikzpicture}[scale=\domscale+0.25,baseline=-45pt]
			\hobox{0}{0}{n}
			\hobox{1}{0}{n+2}
			\hobox{0}{1}{n-1}
			\hobox{1}{1}{n+1}
			\hobox{0}{2}{\vdots}
			\hobox{0}{3}{1}
			\end{tikzpicture}}\dashrightarrow 
			\tiny{\begin{tikzpicture}[scale=\domscale+0.25,baseline=-45pt]
			\hobox{0}{0}{n}
			\hobox{1}{0}{2n-1}
			\hobox{0}{1}{n-1}
			\hobox{1}{1}{2n-2}
			\hobox{0}{2}{\vdots}
			\hobox{1}{2}{\vdots} 
			\hobox{0}{3}{2}
			\hobox{1}{3}{n+1}
			\hobox{0}{4}{1}
			\end{tikzpicture}}\to 
			\tiny{\begin{tikzpicture}[scale=\domscale+0.25,baseline=-45pt]
			\hobox{0}{0}{2n}
			\hobox{1}{0}{2n-1}
			\hobox{0}{1}{n}
			\hobox{1}{1}{2n-2}
			\hobox{0}{2}{\vdots}
			\hobox{1}{2}{\vdots} 
			\hobox{0}{3}{3}
			\hobox{1}{3}{n+1}
			\hobox{0}{4}{2}	
			\hobox{0}{5}{1}
			\end{tikzpicture}}=P(\lambda).
			\]
			So $p((\lambda+\rho)^-)^{odd}=(\underbrace{1,1,1,\dots,1}_{n})$, and
			\begin{align*}
			{\rm GKdim}\:L(\lambda)&=n^2-\sum\limits_{i\ge 1}(i-1)p((\lambda+\rho)^-)_i^{odd}\\&=n^2-(0\cdot 1+1\cdot 1+2\cdot 1+3\cdot 1+\dots +(n-1)\cdot 1)\\&=n^2-\frac{n(n-1)}{2}\\&=\frac{n(n+1)}{2}=\dim(\mathfrak{u}).
			\end{align*}		
			By Lemma \ref{reducible} we know that $M_{I}(\lambda)$ is irreducible.
			
		\end{enumerate}	

	Thus $z=2-n$ is the first reducible point.
\item If $z\notin\mathbb{Z}$, then	
$(\lambda+\rho)_{(0)}^-=0$,  $(\lambda+\rho)_{(\frac{1}{2})}^-=0$ and $ \lambda+\rho\in [\lambda]_3$.	

	From Theorem \ref{GKdim}  we will have
	\begin{align*}
		{\rm GKdim}\:L(\lambda)&= n^2-F_B((\lambda+\rho)_{(0)}^-)-F_B((\lambda+\rho)_{(\frac{1}{2})}^-)-\sum _{x\in [\lambda]_3} F_A(\tilde{x})\\
		&=n^2-\sum\limits_{i\ge 1}(i-1)p_i(\lambda+\rho)\\&=n^2-(0\cdot 1+1\cdot 1+2\cdot 1+3\cdot 1+\dots +(n-1)\cdot 1)\\&=n^2-\frac{(n)(n-1)}{2}\\&=\frac{n(n+1)}{2}=\dim(\mathfrak{u}).
	\end{align*}
	
	So $M_{I}(\lambda)$ is irreducible.

\end{enumerate}		

Hence if $n$ is even, $z\notin\mathbb{Z}$, $M_{I}(\lambda)$ is irreducible and if $z\in\mathbb{Z}$, $M_{I}(\lambda)$ is reducible if and only if $z\in 2-n+\mathbb{Z}_{\ge 0}$.

	When $n$ is odd,  we will have the follows.	
\begin{enumerate}
	\item If $z\in \mathbb{Z}$ and $z>-1$, from (\ref{0}) we have
	${\rm GKdim}\:L(\lambda)=0<\dim(\mathfrak{u})$.
	By Lemma \ref{reducible}, $M_{I}(\lambda)$ is reducible.
		
		\begin{enumerate}
			\item When $z=-1$, we will have
			\[
		\tiny{\begin{tikzpicture}[scale=\domscale+0.25,baseline=-45pt]
			\hobox{0}{0}{n}
			\hobox{0}{1}{n-1}
			\hobox{0}{2}{\vdots}
			\hobox{0}{3}{1} 
			\end{tikzpicture}}\to 
			\tiny{\begin{tikzpicture}[scale=\domscale+0.25,baseline=-45pt]
			\hobox{0}{0}{n}
			\hobox{1}{0}{n+1}
			\hobox{0}{1}{n-1}
			\hobox{0}{2}{\vdots}
			\hobox{0}{3}{1} 
			\end{tikzpicture}}\dashrightarrow 
			\tiny{\begin{tikzpicture}[scale=\domscale+0.25,baseline=-45pt]
			\hobox{0}{0}{2n}
			\hobox{1}{0}{n+1}
			\hobox{0}{1}{2n-1}
			\hobox{0}{2}{\vdots}
			\hobox{0}{3}{1} 
			\end{tikzpicture}}=P(\lambda).
			\]
		So	$ p((\lambda+\rho)^-)=(2,\underbrace{1,1,\dots,1)}_{2n-2}\:\:{\rm and}\:\:
			p((\lambda+\rho)^-)^{odd}=(\underbrace{1,1,0,\dots,1,0)}_{2n-1}.$
			\begin{align*}
			{\rm GKdim}\:L(\lambda)&=n^2-\sum\limits_{i\ge 1}(i-1)p((\lambda+\rho)^-)_i^{odd}\\&=n^2-(0\cdot 1+1\cdot 1+2\cdot 0+3\cdot 1+\dots +(2n-3)\cdot 1+(2n-2)\cdot 0)\\&=n^2-(1+3+\dots+2n-3)\\&=n^2-n^2+2n-1=2n-1<\dim(\mathfrak{u}).	
			\end{align*}	
			By Lemma \ref{reducible},  $M_{I}(\lambda)$ is reducible.
			
			\item When $z=-2$, we will have	
			\[
		\tiny{\begin{tikzpicture}[scale=\domscale+0.25,baseline=-45pt]
			\hobox{0}{0}{n}
			\hobox{0}{1}{n-1}
			\hobox{0}{2}{\vdots}
			\hobox{0}{3}{1} 
			\end{tikzpicture}}\to 
			\tiny{\begin{tikzpicture}[scale=\domscale+0.25,baseline=-45pt]
			\hobox{0}{0}{n}
			\hobox{1}{0}{n+1}
			\hobox{0}{1}{n-1}
			\hobox{0}{2}{\vdots}
			\hobox{0}{3}{1} 
			\end{tikzpicture}}\to 
			\tiny{\begin{tikzpicture}[scale=\domscale+0.25,baseline=-45pt]
			\hobox{0}{0}{n}
			\hobox{1}{0}{n+2}
			\hobox{0}{1}{n-1}
			\hobox{1}{1}{n+1}
			\hobox{0}{2}{\vdots}
			\hobox{0}{3}{1}
			\end{tikzpicture}}\to 
			\tiny{\begin{tikzpicture}[scale=\domscale+0.25,baseline=-45pt]
			\hobox{0}{0}{n+3}
			\hobox{1}{0}{n+2}
			\hobox{0}{1}{n}
			\hobox{1}{1}{n+1}
			\hobox{0}{2}{\vdots}
			\hobox{0}{3}{1}    
			\end{tikzpicture}}\dashrightarrow 
			\tiny{\begin{tikzpicture}[scale=\domscale+0.25,baseline=-45pt]
			\hobox{0}{0}{2n}
			\hobox{1}{0}{n+2}
			\hobox{0}{1}{2n-1}
			\hobox{1}{1}{n+1}
			\hobox{0}{2}{\vdots}
			\hobox{0}{3}{1} 
			\end{tikzpicture}}=P(\lambda).
			\]
		So	$p((\lambda+\rho)^-)^{odd}=(\underbrace{1,1,0,1,0,1\dots,0,1)}_{2n-2}.$
			\begin{align*}
			{\rm GKdim}\:L(\lambda)&=n^2-\sum\limits_{i\ge 1}(i-1)p((\lambda+\rho)^-)_i^{odd}\\&=n^2-(0\cdot 1+1\cdot 1+2\cdot 0+3\cdot 1+\dots +(2n-4)\cdot 0+(2n-1)\cdot 0)\\&=n^2-(1+3+\dots+2n-3)\\&=n^2=n^2+2n-1=2n-1<\dim(\mathfrak{u}).
			\end{align*}
			By Lemma \ref{reducible},  $M_{I}(\lambda)$ is reducible.	
			
			\item When $z=1-n$, we will have	
			\[
	\tiny{\begin{tikzpicture}[scale=\domscale+0.25,baseline=-45pt]
			\hobox{0}{0}{n}
			\hobox{0}{1}{n-1}
			\hobox{0}{2}{\vdots}
			\hobox{0}{3}{1} 
			\end{tikzpicture}}\to 
			\tiny{\begin{tikzpicture}[scale=\domscale+0.25,baseline=-45pt]
			\hobox{0}{0}{n}
			\hobox{1}{0}{n+1}
			\hobox{0}{1}{n-1}
			\hobox{0}{2}{\vdots}
			\hobox{0}{3}{1} 
			\end{tikzpicture}}\to 
			\tiny{\begin{tikzpicture}[scale=\domscale+0.25,baseline=-45pt]
			\hobox{0}{0}{n}
			\hobox{1}{0}{n+2}
			\hobox{0}{1}{n-1}
			\hobox{1}{1}{n+1}
			\hobox{0}{2}{\vdots}
			\hobox{0}{3}{1}
			\end{tikzpicture}}\dashrightarrow
			\tiny{\begin{tikzpicture}[scale=\domscale+0.25,baseline=-45pt]
			\hobox{0}{0}{n}
			\hobox{1}{0}{2n-1}
			\hobox{0}{1}{n-1}
			\hobox{1}{1}{2n-2}
			\hobox{0}{2}{\vdots}
			\hobox{1}{2}{\vdots} 
			\hobox{0}{3}{2}
			\hobox{1}{3}{n+1}
			\hobox{0}{4}{1}
			\end{tikzpicture}}\to 
			\tiny{\begin{tikzpicture}[scale=\domscale+0.25,baseline=-45pt]
			\hobox{0}{0}{2n}
			\hobox{1}{0}{2n-1}
			\hobox{0}{1}{n}
			\hobox{1}{1}{2n-2}
			\hobox{0}{2}{\vdots}
			\hobox{1}{2}{\vdots} 
			\hobox{0}{3}{3}
			\hobox{1}{3}{n+1}
			\hobox{0}{4}{2}	
			\hobox{0}{5}{1}
			\end{tikzpicture}}=P(\lambda).
			\]
		So	$p((\lambda+\rho)^-)^{odd}=(\underbrace{1,1,1,\dots,1}_{n-1},0,1).$
			\begin{align*}
			{\rm GKdim}\:L(\lambda)&=n^2-\sum\limits_{i\ge 1}(i-1)p((\lambda+\rho)^-)_i^{odd}\\&=n^2-(0\cdot 1+1\cdot 1+2\cdot 1+3\cdot 1+\dots +(n-2)\cdot 1+(n-1)\cdot 0+n\cdot 1)\\&=n^2=(1+2+\dots +n-2)-n\\&=n^2-\frac{(n-1)(n-2)}{2}-n\\&=\frac{1}{2}n^2+\frac{1}{2}n-1<\dim(\mathfrak{u}).
			\end{align*}
			By Lemma \ref{reducible},  $M_{I}(\lambda)$ is reducible.
			
			\item When $z=-n$, we will have	
			\[
	\tiny{\begin{tikzpicture}[scale=\domscale+0.25,baseline=-45pt]
			\hobox{0}{0}{n}
			\hobox{0}{1}{n-1}
			\hobox{0}{2}{\vdots}
			\hobox{0}{3}{1} 
			\end{tikzpicture}}\to 
			\tiny{\begin{tikzpicture}[scale=\domscale+0.25,baseline=-45pt]
			\hobox{0}{0}{n}
			\hobox{1}{0}{n+1}
			\hobox{0}{1}{n-1}
			\hobox{0}{2}{\vdots}
			\hobox{0}{3}{1} 
			\end{tikzpicture}}\to 
			\tiny{\begin{tikzpicture}[scale=\domscale+0.25,baseline=-45pt]
			\hobox{0}{0}{n}
			\hobox{1}{0}{n+2}
			\hobox{0}{1}{n-1}
			\hobox{1}{1}{n+1}
			\hobox{0}{2}{\vdots}
			\hobox{0}{3}{1}
			\end{tikzpicture}}\dashrightarrow 
			\tiny{\begin{tikzpicture}[scale=\domscale+0.25,baseline=-45pt]
			\hobox{0}{0}{n}
			\hobox{1}{0}{2n-1}
			\hobox{0}{1}{n-1}
			\hobox{1}{1}{2n-2}
			\hobox{0}{2}{\vdots}
			\hobox{1}{2}{\vdots}
			\hobox{0}{3}{2} 
			\hobox{1}{3}{n+1}  
			\hobox{0}{4}{2} 
			\end{tikzpicture}}\to
			\tiny{\begin{tikzpicture}[scale=\domscale+0.25,baseline=-45pt]
			\hobox{0}{0}{n}
			\hobox{1}{0}{2n}
			\hobox{0}{1}{n-1}
			\hobox{1}{1}{2n-1}
			\hobox{0}{2}{\vdots}
			\hobox{1}{2}{\vdots}
			\hobox{0}{3}{1} 
			\hobox{1}{3}{n+1}
			\end{tikzpicture}}=P(\lambda).
			\]
		So	$p((\lambda+\rho)^-)^{odd}=(\underbrace{1,1,1,\dots,1}_{n})$.
			\begin{align*}
			{\rm GKdim}\:L(\lambda)&=n^2-\sum\limits_{i\ge 1}(i-1)p((\lambda+\rho)^-)_i^{odd}\\&=n^2-(0\cdot 1+1\cdot 1+2\cdot 1+3\cdot 1+\dots +(n-1)\cdot 1)\\&=n^2-\frac{(n)(n-1)}{2}\\&=\frac{n(n+1)}{2}=\dim(\mathfrak{u}).
			\end{align*}
			By Lemma \ref{reducible},  $M_{I}(\lambda)$ is irreducible. 		
		\end{enumerate}	
			So $z=1-n$ is the first reducible point.
			
		\item If $z\notin\mathbb{Z}$, we will have similar arguments (i.e., when $n$ is even). Thus $M_{I}(\lambda)$ is irreducible when $z\notin\mathbb{Z}$.
	\end{enumerate}
	All in all, when $p=n$, $M_{I}(\lambda)$ is reducible if and only if
	\[	z\in\left\{
	\begin{array}{ll}
	2-n+\mathbb{Z}_{\ge 0} &\text{if $n$ is  even}\\	     	  	   
	1-n+\mathbb{Z}_{\ge 0} &\text{if $n$ is odd}.	
	\end{array}	
	\right.
	\]\\
	
	Now we take  $\Delta^+(\frl)=\{\alpha_2,\dots,\alpha_{n-1},\alpha_{n}\}$.
	
	By the definition of fundamental weight, we can compute that $\xi=(1,\underbrace{0,0,\dots,0}_{n-1})$.
	
	So we have 
	$$\lambda+\rho=(z+n-\frac{1}{2},n-\frac{3}{2},\dots,\frac{1}{2}),$$	
	$$(\lambda+\rho)^-=(z+n-\frac{1}{2},n-\frac{3}{2},\dots,\frac{1}{2},-\frac{1}{2},\dots,-z+\frac{1}{2}-n).$$
	\begin{enumerate}
		\item When $z\in\mathbb{Z}$, we will have the follows.
		
		\begin{enumerate}
			\item When $z>-1$, we will have
			$$p((\lambda+\rho)^-)_i^{odd}=(0,1,0,1,\dots,0,1).$$
		By Theorem \ref{integral}, we get
			\begin{align*}
			{\rm GKdim}\:L(\lambda)&=n^2-\sum\limits_{i\ge 1}(i-1)p((\lambda+\rho)^-)_i^{odd}\\&=n^2-(0\cdot 0+1\cdot 1+2\cdot 0+\dots+(2n-2)\cdot 0+(2n-1)\cdot 1)\\&=n^2-(1+3+\dots+2n-1)\\&=n^2-n^2=0<\dim(\mathfrak{u}).
			\end{align*}
			By Lemma \ref{reducible} we know that $M_{I}(\lambda)$ is reducible.
			
			\item When $z=-1$, we will have
			$$(\lambda+\rho)^-=(n-\frac{3}{2},n-\frac{3}{2},\dots,\frac{1}{2},-\frac{1}{2},\dots,\frac{3}{2}-n,\frac{3}{2}-n),$$
			\[
	\tiny{\begin{tikzpicture}[scale=\domscale+0.25,baseline=-45pt]
			\hobox{0}{0}{n}
			\hobox{1}{0}{1}
			\hobox{0}{1}{n-1}
			\hobox{0}{2}{\vdots}
			\hobox{0}{3}{2} 
			\end{tikzpicture}}\to 
			\tiny{\begin{tikzpicture}[scale=\domscale+0.25,baseline=-45pt]
			\hobox{0}{0}{n+1}
			\hobox{1}{0}{1}
			\hobox{0}{1}{n}
			\hobox{0}{2}{\vdots}
			\hobox{0}{3}{2} 
			\end{tikzpicture}}\dashrightarrow 
			\tiny{\begin{tikzpicture}[scale=\domscale+0.25,baseline=-45pt]
			\hobox{0}{0}{2n-1}
			\hobox{1}{0}{1}
			\hobox{0}{1}{2n-2}
			\hobox{0}{2}{\vdots}
			\hobox{0}{3}{2}
			\end{tikzpicture}}\to
			\tiny{\begin{tikzpicture}[scale=\domscale+0.25,baseline=-45pt]
			\hobox{0}{0}{2n-1}
			\hobox{1}{0}{2n}
			\hobox{0}{1}{2n-2}
			\hobox{1}{1}{1}
			\hobox{0}{2}{\vdots}
			\hobox{0}{3}{2}    
			\end{tikzpicture}}=P(\lambda).
			\]
			
		So	$p((\lambda+\rho)^-)=(2,2,\underbrace{1,\dots,1}_{2n-4})$ and $ p((\lambda+\rho)^-)^{odd}=(1,1,\underbrace{0,1,\dots,0,1}_{2n-4}).$
			\begin{align*}
			{\rm GKdim}\:L(\lambda)&=n^2-\sum\limits_{i\ge 1}(i-1)p((\lambda+\rho)^-)_i^{odd}\\&=n^2-(0\cdot 0+1\cdot 1+2\cdot 0+\dots+(2n-4)\cdot 0+(2n-3)\cdot 1)\\&=n^2-(1+3+\dots+2n-3)\\&=n^2-(n-1)^2=2n-1=\dim(\mathfrak{u}).
			\end{align*}
		
			By Lemma \ref{reducible},  $M_{I}(\lambda)$ is irreducible. 
		\end{enumerate}
Thus $z=0$ is the first reducible point.

		\item When $z\notin \frac{1}{2}\mathbb{Z}$, we will have
		$$(\lambda+\rho)=(z+n-\frac{1}{2},n-\frac{3}{2},\dots,\frac{1}{2}),$$
		$(\lambda+\rho)_{(0)}^-=(0)\:$ and $\:(\lambda+\rho)_{(\frac{1}{2})}^-=(n-\frac{3}{2},\dots,\frac{1}{2},-\frac{1}{2},\dots,\frac{3}{2}-n)$.
		\begin{align*}
		\sum _{x\in [\lambda]_3} F_A(\tilde{x})&=0\cdot 0+1\cdot 1+2\cdot 0+\dots+(2n-4)\cdot 0+(2n-3)\cdot 1\\&=1+3+\dots +2n-3\\&=\frac{(2n-2)(n-1)}{2}=(n-1)^2.
		\end{align*}		
%
%
		
	So	we can conclude that 
		\begin{align*}
		{\rm GKdim}\:L(\lambda)=n^2-0-(n-1)^2=2n-1=\dim(\mathfrak{u})
		\end{align*}
		By Lemma \ref{reducible}, $M_{I}(\lambda)$ is irreducible.

%
\item When $z\in \frac{1}{2}+\mathbb{Z}$, we will have the follows.
\begin{enumerate}	
		\item When $z=-\frac{1}{2}$, we will have
		$$(\lambda+\rho)^-=(n-1,n-\frac{3}{2},\dots,\frac{1}{2},-\frac{1}{2},\dots,\frac{3}{2}-n,1-n),$$
		$$(\lambda+\rho)_{(0)}^-=(n-1,1-n)\:\:{\rm and}\:\: (\lambda+\rho)_{(\frac{1}{2})}^-=(n-\frac{3}{2},\dots,\frac{1}{2},-\frac{1}{2},\dots,\frac{3}{2}-n),$$
		\begin{align*}
		\sum\limits_{i\ge 1}(i-1)p((\lambda+\rho)_{(\frac{1}{2})}^-)_i^{odd}=(n-1)^2.
		\end{align*}
		So ${\rm GKdim}\:L(\lambda)=n^2-(0\cdot 0+1\cdot 1)-(n-1)^2=2n-2<\dim(\mathfrak{u})$.
		
		By Lemma \ref{reducible}, $M_{I}(\lambda)$ is reducible.
		
		\item When $z=\frac{3}{2}-n$, we will have		
		$$(\lambda+\rho)^-=(1,n-\frac{3}{2},\dots,\frac{1}{2},-\frac{1}{2},\dots,\frac{3}{2}-n,-1)\:\:{\rm and}\:\: 
		(\lambda+\rho)_{(0)}^-=(1,-1).$$
		
		So ${\rm GKdim}\:L(\lambda)=2n-2<\dim(\mathfrak{u})$. By Lemma \ref{reducible},  $M_{I}(\lambda)$ is reducible.
		
		\item When $z=\frac{1}{2}-n$, we will have		
		$$(\lambda+\rho)^-=(0,n-\frac{3}{2},\dots,\frac{1}{2},-\frac{1}{2},\dots,\frac{3}{2}-n,0)\:\:{\rm and}\:\: 
		(\lambda+\rho)_{(0)}^-=(0,0).$$
		
		So ${\rm GKdim}\:L(\lambda)=2n-1=\dim(\mathfrak{u})$. By Lemma \ref{reducible}, $M_{I}(\lambda)$ is irreducible. 
	\end{enumerate}
So $z=\frac{3}{2}-n$ is the first reducible point.
	\end{enumerate}

	All in all, when $p=1$, $M_{I}(\lambda)$ is reducible if and only if $z\in (\mathbb{Z}_{\ge 0})\cup (\frac{3}{2}-n+\mathbb{Z}_{\ge 0})$.\\

	Finally, we consider the second case.
	
	Take  $\Delta^+(\frl)=\{\alpha_1,\dots,\alpha_{p-1},\alpha_{p+1},\dots,\alpha_{n}\}$. 

	By the definition of fundamental weight, we can compute that $\xi=(\underbrace{1,1,\dots,1}_{p},0,\cdots,0)$.
	
	So we have
	$$\lambda+\rho=(z+n-\frac{1}{2},\cdots,z+n-p+\frac{1}{2},n-p-\frac{1}{2},\cdots,\frac{1}{2})$$
	
	First we consider the case when $p$ is odd and $3p\ge 2n+1$.
	
	\begin{enumerate}
		\item When $z\in\mathbb{Z}$, we will have the follows.
		\begin{enumerate}
			\item When $z>-1$, we will have	
			$$p((\lambda+\rho)^-)^{odd}=(0,1,\dots,0,1).$$
			By Theorem \ref{integral}, we get 
			\begin{align*}
			{\rm GKdim}\:L(\lambda)&=n^2-(0\cdot 0+1\cdot 1+2\cdot 0+\dots+(2n-2)\cdot 0+(2n-1)\cdot 1)\\&=n^2-(1+3+\dots+2n-1)\\&=n^2-n^2=0<\dim(\mathfrak{u}).
			\end{align*}	
So	$M_{I}(\lambda)$ is reducible.		
			\item When $z=-1$, we will have
			$$\lambda+\rho=(n-\frac{3}{2},\dots,n-p-\frac{1}{2},n-p-\frac{1}{2},\cdots,\frac{1}{2}).$$
			\[
	\tiny{\begin{tikzpicture}[scale=\domscale+0.25,baseline=-45pt]
			\hobox{0}{0}{p}
			\hobox{1}{0}{p+1}
			\hobox{0}{1}{p-1}
			\hobox{0}{2}{\vdots}
			\hobox{0}{3}{1}
			\end{tikzpicture}}\to 
			\tiny{\begin{tikzpicture}[scale=\domscale+0.25,baseline=-45pt]
			\hobox{0}{0}{n}
			\hobox{1}{0}{p+1}
			\hobox{0}{1}{n-1}
			\hobox{0}{2}{\vdots}
			\hobox{0}{3}{1} 
			\end{tikzpicture}}\dashrightarrow 
			\tiny{\begin{tikzpicture}[scale=\domscale+0.25,baseline=-45pt]
			\hobox{0}{0}{k}
			\hobox{1}{0}{p+1}
			\hobox{0}{1}{k-1}
			\hobox{0}{2}{\vdots}
			\hobox{0}{3}{1}
			\end{tikzpicture}}\to
			\tiny{\begin{tikzpicture}[scale=\domscale+0.25,baseline=-45pt]
			\hobox{0}{0}{k}
			\hobox{1}{0}{k+1}
			\hobox{0}{1}{k-1}
			\hobox{1}{1}{p+1}
			\hobox{0}{2}{\vdots}
			\hobox{0}{3}{1}     
			\end{tikzpicture}}\dashrightarrow
			\tiny{\begin{tikzpicture}[scale=\domscale+0.25,baseline=-45pt]
			\hobox{0}{0}{2n}
			\hobox{1}{0}{k+1}
			\hobox{0}{1}{2n-1}
			\hobox{1}{1}{p+1}
			\hobox{0}{2}{\vdots}
			\hobox{0}{3}{1}     
			\end{tikzpicture}}=P(\lambda).
			\]
			
		In the above tableau, we denote	 $k=2n-p$.
		
		So	$p((\lambda+\rho)^-)^{odd}=(1,1,\underbrace{0,1,\dots,0,1}_{2n-4}).$
			\begin{align}\label{1}
			{\rm GKdim}\:L(\lambda)\notag&=n^2-\sum\limits_{i\ge 1}(i-1)p((\lambda+\rho)^-)_i^{odd}\notag\\&=n^2-(0\cdot 1+1\cdot 1+2\cdot 0+3\cdot 1+\dots +(2n-4)\cdot 0+(2n-3)\cdot 1)\notag\\&=n^2-n^2+2n-1=2n-1<\dim(\mathfrak{u}).
			\end{align}
			By Lemma \ref{reducible}, $M_{I}(\lambda)$ is reducible.
			
			\item When $z=\frac{1}{2}-n+\frac{p}{2}$, we will have
			$$(\lambda+\rho)^-=(\frac{p}{2},\dots,-\frac{p}{2}+1,n-p-\frac{1}{2},\dots,\frac{1}{2},-\frac{1}{2},\dots,-n+p+\frac{1}{2},\frac{p}{2}-1,\dots,-\frac{p}{2}).$$
			So $p((\lambda+\rho)^-)^{odd}=(\underbrace{1,2,\dots,1,2}_{2n-2p},\underbrace{1,\dots,1}_{3p-2n-1},0,1).$
			\begin{align*}
			{\rm GKdim}\:L(\lambda)&=n^2-\sum\limits_{i\ge 1}(i-1)p((\lambda+\rho)^-)_i^{odd}\\&=n^2-(2+2+3\cdot 2 +4+\dots+2n-2p-2+(2n-2p-1)\cdot 2)\\&-(2n-2p+\dots+p-2+p)\\&=n^2-2C^2_{n-p}-2(n-p)^2-2n+p-\frac{(2n-p-1)(3p-2n-2)}{2}\\&=2np-\frac{3}{2}p^2+\frac{p}{2}-1=\dim(\mathfrak{u})-1.
			\end{align*}
			By Lemma \ref{reducible}, $M_{I}(\lambda)$ is reducible.
			
			\item When $z=-\frac{1}{2}-n+\frac{p}{2}$, we will have
			$$(\lambda+\rho)^-=(\frac{p}{2}-1,\dots,-\frac{p}{2},n-p-\frac{1}{2},\dots,\frac{1}{2},-\frac{1}{2},\dots,-n+p+\frac{1}{2},\frac{p}{2},\dots,-\frac{p}{2}+1),$$
			 So $p((\lambda+\rho)^-)^{odd}=(\underbrace{1,2,\dots,1,2}_{2n-2p},\underbrace{1,1,\dots,1}_{3p-2n})$.
			\begin{align*}
			{\rm GKdim}\:L(\lambda)&=n^2-\sum\limits_{i\ge 1}(i-1)p((\lambda+\rho)^-)_i^{odd}\\&=n^2-(2+2+3\cdot 2 +4+\dots+2n-2p-2+(2n-2p-1)\cdot 2)\\&-(2n-2p+\dots+p-1)\\&=n^2-2C^2_{n-p}-2(n-p)^2-\frac{(2n-p-1)(3p-2n)}{2}\\&=2np-\frac{3}{2}p^2+\frac{p}{2}=\dim(\mathfrak{u}).
			\end{align*}
			By Lemma \ref{reducible}, $M_{I}(\lambda)$ is irreducible.
		\end{enumerate}	
So $z=\frac{1}{2}-n+\frac{p}{2}$ is the first reducible point.
		\item When $z\notin\frac{1}{2}\mathbb{Z}$, we have
		$$(\lambda+\rho)^-_{(\frac{1}{2})}=(n-p-\frac{1}{2},\dots,\frac{1}{2},-\frac{1}{2},\dots,-n+p+\frac{1}{2}),$$
		$$(\lambda+\rho)_{(z)}=(z+n-\frac{1}{2},\dots,z+n-p+\frac{1}{2})\in [\lambda]_3.$$
		\begin{align*}
		{\rm GKdim}\:L(\lambda)&=n^2-(n-p)^2-(1+2+\dots+p-1)\\&=2np-\frac{3}{2}p^2+\frac{p}{2}=\dim(\mathfrak{u}).
		\end{align*}
		So $M_{I}(\lambda)$ is irreducible.
		
		\item When $z\in\frac{1}{2}+\mathbb{Z}$, we will have the follows.
		\begin{enumerate}
			\item When $z\ge-\frac{1}{2}$, we will have
			${\rm GKdim}\:L(\lambda)=0$. 
			
			So $M_{I}(\lambda)$ is reducible.
			
			\item When $z=-\frac{3}{2}$, we will have
			\begin{align}\label{2}
			(\lambda+\rho)^-_{(\frac{1}{2})}=(n-p-\frac{1}{2},\dots,\frac{1}{2},-\frac{1}{2},\dots,-n+p+\frac{1}{2}),
			\end{align}
			\begin{align}\label{3}
			(\lambda+\rho)_{(0)}=(n-2,\dots,n-p-1,-n+p+1,\dots,-n+2).
			\end{align}
			We divide the discussion into two cases:
			\begin{enumerate}
				\item	 $p=n-1$. We will have 
				$$p((\lambda+\rho)^-_{(0)})^{odd}=(\underbrace{0,1,\dots,0,1}_{2p-2}).$$
				\begin{align*}
				{\rm GKdim}\:L(\lambda)&=n^2-(n-p)^2-(1+3+\dots+2p-3)\\&=n^2-(n-p)^2-(1+3+\dots+2p-3+2p-1)+2p-1\\&=2np-2p^2+2p-1\\&=2np-\frac{3}{2}p^2+\frac{p}{2}-\frac{1}{2}p^2+\frac{3}{2}p-1\\&<2np-\frac{3}{2}p^2+\frac{p}{2}=\dim(\mathfrak{u}).
				\end{align*}
				
		\item $p<n-1$. We will have
		$$p((\lambda+\rho)^-_{(0)})^{odd}=(\underbrace{0,1,\dots,0,1}_{2p}).$$
		\begin{align*}
		{\rm GKdim}\:L(\lambda)&=n^2-(n-p)^2-(1+3+\dots+2p-1)\\&=2np-2p^2\\&=2np-\frac{3}{2}p^2+\frac{p}{2}-\frac{1}{2}p^2-\frac{1}{2}p\\&<2np-\frac{3}{2}p^2+\frac{p}{2}=\dim(\mathfrak{u}).
		\end{align*}
			\end{enumerate}	
		
			All in all, when $z=-\frac{3}{2}$, $M_{I}(\lambda)$ is reducible.
			
			\item When $z=1-n+\frac{p}{2}$, we will have
			$$(\lambda+\rho)^-=(\frac{p}{2}+\frac{1}{2},\dots,-\frac{p}{2}+\frac{3}{2},n-p-\frac{1}{2},\dots,\frac{1}{2},-\frac{1}{2},\dots,-n+p+\frac{1}{2},\frac{p}{2}-\frac{3}{2},\dots,-\frac{p}{2}-\frac{1}{2}),$$
			$$(\lambda+\rho)^-_{(0)}=(\frac{p}{2}+\frac{1}{2},\dots,-\frac{p}{2}+\frac{3}{2},\frac{p}{2}-\frac{3}{2},\dots,-\frac{p}{2}-\frac{1}{2}),$$	
			$$(\lambda+\rho)^-_{(\frac{1}{2})}=(n-p-\frac{1}{2},\dots,\frac{1}{2},-\frac{1}{2},\dots,-n+p+\frac{1}{2}).$$
			\[
	\tiny{\begin{tikzpicture}[scale=\domscale+0.25,baseline=-45pt]
			\hobox{0}{0}{p}
			\hobox{0}{1}{p-1}
			\hobox{0}{2}{\vdots}
			\hobox{0}{3}{1}
			\end{tikzpicture}}\to 
			\tiny{\begin{tikzpicture}[scale=\domscale+0.25,baseline=-45pt]
			\hobox{0}{0}{p}
			\hobox{1}{0}{p+1}
			\hobox{0}{1}{p-1}
			\hobox{0}{2}{\vdots}
			\hobox{0}{3}{1} 
			\end{tikzpicture}}\dashrightarrow 
			\tiny{\begin{tikzpicture}[scale=\domscale+0.25,baseline=-45pt]
			\hobox{0}{0}{p}
			\hobox{1}{0}{2p-2}
			\hobox{0}{1}{p-1}
			\hobox{1}{1}{\vdots}
			\hobox{0}{2}{\vdots}
			\hobox{1}{2}{p+1}
			\hobox{0}{3}{1}
			\end{tikzpicture}}\dashrightarrow
			\tiny{\begin{tikzpicture}[scale=\domscale+0.25,baseline=-45pt]
			\hobox{0}{0}{2p}
			\hobox{1}{0}{2p-2}
			\hobox{0}{1}{2p-1}
			\hobox{1}{1}{\vdots}
			\hobox{0}{2}{\vdots}
			\hobox{1}{2}{p+1}
			\hobox{0}{3}{1}
			\end{tikzpicture}}=P(\lambda).
			\]
		So	$p((\lambda+\rho)^-_{(0)})^{odd}=(\underbrace{1,\dots,1}_{p-2},1,0,1).$
			\begin{align*}
			{\rm GKdim}\:L(\lambda)&=n^2-(n-p)^2-(1+2+\dots+p-2+p)\\&=n^2-(n-p)^2-p-\frac{(p-1)(p-2)}{2}\\&=2np-\frac{3}{2}p^2+\frac{p}{2}-1=\dim(\mathfrak{u})-1.
			\end{align*}
			By Lemma \ref{reducible}, $M_{I}(\lambda)$ is reducible.
			
			\item When $z=-n+\frac{p}{2}$, we will have
			$$(\lambda+\rho)^-_{(0)}=(\frac{p}{2}-\frac{1}{2},\dots,-\frac{p}{2}+\frac{1}{2},\frac{p}{2}-\frac{1}{2},\dots,-\frac{p}{2}+\frac{1}{2}).$$
			\begin{align*}
			{\rm GKdim}\:L(\lambda)&=n^2-(n-p)^2-(1+2+\dots+p-1)\\&=n^2-(n-p)^2-\frac{p(p-1)}{2}\\&=2np-\frac{3}{2}p^2+\frac{p}{2}=\dim(\mathfrak{u}).
			\end{align*}
			By Lemma \ref{reducible}, $M_{I}(\lambda)$ is irreducible.
		\end{enumerate}
		
		Hence $z=1-n+\frac{p}{2}$ is the first reducible point. 
	\end{enumerate}
	
So when $p$ is odd and $3p\ge 2n+1$, $M_{I}(\lambda)$ is reducible if and only if $z\in(\frac{1}{2}-n+\frac{p}{2}+\mathbb{Z}_{\ge 0})\cup(1-n+\frac{p}{2}+\mathbb{Z}_{\ge 0})=\frac{1}{2}-n+\frac{p}{2}+\frac{1}{2}\mathbb{Z}_{\ge 0}$.	
	
\hspace{1cm}	
	
	Next we consider the case when $p$ is even.
	\begin{enumerate}
		\item When $z\in\mathbb{Z}$, we have the follows.
		\begin{enumerate}
			\item When $z\ge 0$, we will have	
			
			${\rm GKdim}\:L(\lambda)=0<\dim(\mathfrak{u})\Rightarrow M_{I}(\lambda)$ is reducible.
			
			\item When $z=-1$, 			
			 $M_{I}(\lambda)$ is reducible since  (\ref{1}).
			
			\item When $z=1-n+\frac{p}{2}$, we divide the discussion into three cases:
		\begin{enumerate}	
			\item $3p\le 2n-2$. We have
			$$p((\lambda+\rho)^-)^{odd}=(\underbrace{1,2,\dots,1,2}_{p-2},1,1,1,1,\underbrace{0,1,\dots,0,1}_{2n-3p-2}).$$
			\begin{align*}
			{\rm GKdim}\:L(\lambda)&=n^2-(2+2+3\cdot 2 +4+\dots+p-4+(p-3)\cdot 2)\\
			&\quad-(p-2+p-1+p+p+1)-(p+3+\dots+2n-2p-1)\\&=n^2-\frac{(p-2)(p-4)}{4}-\frac{(p-2)^2}{2}-3p+3-\frac{(2n-p)(n-\frac{3p}{2})}{2}\\&=2np-\frac{3}{2}p^2+\frac{p}{2}-1=\dim(\mathfrak{u})-1.
			\end{align*}
			\item $n=p+1$ and $p>2$. We have 
				$$p((\lambda+\rho)^-)^{odd}=(1,2,\underbrace{1,1,\dots,1}_{2n-p-6},\underbrace{0,1,\dots,0,1}_{2p-2n+6}).$$
			\begin{align*}
			{\rm GKdim}\:L(\lambda)&=n^2-(2+2+3+\dots+2n-p-5)\\
		&\quad -(2n-p-3+2n-p-1+\dots+p+1)\\&=n^2-1-\frac{(2n-p-4)(2n-p-5)}{2}-\frac{(2n-2)(p-n+3)}{2}\\&=2np-\frac{1}{2}p^2+5n-\frac{7}{2}p-np-8\\&=2np-\frac{3}{2}p^2+\frac{1}{2}p-3=\dim(\mathfrak{u})-3.
			\end{align*}
			
			\item $3p\ge 2n$ and $n\neq p+1$. We have
			$$p((\lambda+\rho)^-)^{odd}=(\underbrace{1,2,\dots,1,2}_{p-2},\underbrace{1,1,\dots,1}_{2n-3p+2},\underbrace{0,1,\dots,0,1}_{3p-2n+2}).$$
			\begin{align*}
			{\rm GKdim}\:L(\lambda)&=n^2-(2+2+3\cdot 2 +4+\dots+p-4+(p-3)\cdot 2)\\
				&\quad-(p-2+p-1
		 +\dots+2n-2p-1)\\
		 	&\quad-(2n-2p+1+2n-2p+3\dots+p+2)\\&=n^2-\frac{(p-2)(p-4)}{4}-\frac{(p-2)^2}{2}-\frac{(2n-p-3)(2n-3p+2)}{2}\\
		 	&\quad-\frac{(2n-p+3)(\frac{3}{2}p-n+\frac{3}{2})}{2}\\
		 	&=2np-\frac{3}{2}p^2-\frac{3p}{2}+n-\frac{13}{4}<\dim(\mathfrak{u}).
			\end{align*}
		\end{enumerate}

			To sum up, $z=1-n+\frac{p}{2}$ is a reducible point.
			
			\item When $z=-n+\frac{p}{2}$, we will have
			$$(\lambda+\rho)^-=(\frac{p}{2}-\frac{1}{2},\dots,-\frac{p}{2}+\frac{1}{2},n-p-\frac{1}{2},\dots,\frac{1}{2},-\frac{1}{2},\dots,-n+p+\frac{1}{2},\frac{p}{2}-\frac{1}{2},\dots,-\frac{p}{2}+\frac{1}{2}).$$	
			Also we divide the discussion into two cases:
				\begin{enumerate}	
			\item $3p\ge 2n$. We have
				$$p((\lambda+\rho)^-)^{odd}=(\underbrace{1,2,\dots,1,2}_{2n-2p},\underbrace{1,1,\dots,1}_{3p-2n}).$$
			\begin{align*}
			{\rm GKdim}\:L(\lambda)&=n^2-(2+2+3\cdot 2 +4+\dots+2n-2p-2+(2n-2p-1)\cdot 2)\\&
			\quad-(2n-2p+2n-2p+1+\dots+p-1)\\&=n^2-2\cdot\frac{(n-p)(n-p-1)}{2}-2\cdot\frac{(2n-2p)(n-p)}{2}-\frac{(2n-p-1)(3p-2n)}{2}\\&=2np-\frac{3}{2}p^2+\frac{1}{2}p=\dim(\mathfrak{u}).
			\end{align*}
		\item $3p<2n$. We have
		$$p((\lambda+\rho)^-)^{odd}=(\underbrace{1,2,\dots,1,2}_{p},\underbrace{0,1,\dots,0,1}_{2n-3p}).$$
		\begin{align*}
		{\rm GKdim}\:L(\lambda)&=n^2-(2+2+3\cdot 2 +4+\dots+p-2+(p-1)\cdot 2)\\
		&\quad-(p+1+p+3+\dots+2n-2p-1)\\&=n^2-\frac{p(p-2)}{4}-\frac{p^2}{2}-\frac{(2n-p)(n-\frac{3p}{2})}{2}\\&=2np-\frac{3}{2}p^2+\frac{p}{2}=\dim(\mathfrak{u}).
		\end{align*}	
	\end{enumerate}	
		
			To sum up, $z=-n+\frac{p}{2}$ is an irreducible point. 		
		\end{enumerate}
	
	So $z=1-n+\frac{p}{2}$ is the first reducible point.
		\item When  $z\notin\frac{1}{2}\mathbb{Z}$,
		we can easily get that $M_{I}(\lambda)$ is irreducible.
		
		\item When  $z\in\frac{1}{2}+\mathbb{Z}$, we have the follows.
		\begin{enumerate}
			\item When $z\ge -\frac{1}{2}$, we will have
			\begin{align}\label{17}
			{\rm GKdim}\:L(\lambda)\notag&=n^2-(1+3+\dots+2p-1)-(n-p)^2\notag\\&=n^2-p^2-n^2+2np-p^2\notag\\&=2np-2p^2<\dim(\mathfrak{u}).
			\end{align}		
			By Lemma \ref{reducible}, $M_{I}(\lambda)$ is reducible.	
			
			\item When $z=-\frac{3}{2}$, by (\ref{2}) and (\ref{3}), $M_{I}(\lambda)$ is reducible.	
			
			\item When $z=\frac{3}{2}-n+\frac{p}{2}$, we will have	
			$$\lambda+\rho=(\frac{p}{2}+1,\dots,-\frac{p}{2}+2,n-p-\frac{1}{2},\dots,\frac{1}{2}),$$
			$$(\lambda+\rho)^-_{(0)}=(\frac{p}{2}+1,\dots,-\frac{p}{2}+2,\frac{p}{2}-2,\dots,-\frac{p}{2}-1),$$
			$$p((\lambda+\rho)^-)^{odd}=(\underbrace{1,1,\dots,1}_{p-3},1,0,1,0,1,0).$$
			\begin{align*}
			{\rm GKdim}\:L(\lambda)&=n^2-(1+2+3+\dots+p-3+p-1+p+1)-(n-p)^2\\&=n^2-\frac{(p-2)(p-3)}{2}-2p-(n-p)^2\\&=2np-\frac{3}{2}p^2+\frac{p}{2}-3=\dim(\mathfrak{u})-3.
			\end{align*}	
			So $z=\frac{3}{2}-n+\frac{p}{2}$ is a reducible point.	
			
			\item When $z=\frac{1}{2}-n+\frac{p}{2}$, we will have
				
			$$\lambda+\rho=(\frac{p}{2},\dots,-\frac{p}{2}+1,n-p-\frac{1}{2},\dots,\frac{1}{2}),$$
			$$(\lambda+\rho)^-_{(0)}=(\frac{p}{2},\dots,-\frac{p}{2}+1,\frac{p}{2}-1,\dots,-\frac{p}{2}),$$
			$$p((\lambda+\rho)^-)^{odd}=(\underbrace{1,1,\dots,1}_{p},1,0).$$
			\begin{align}\label{4}
			{\rm GKdim}\:L(\lambda)\notag&=n^2-(1+2+3+\dots+p-2+p-1)-(n-p)^2\notag\\&=n^2-\frac{p(p-1)}{2}-(n-p)^2\notag\\&=2np-\frac{3}{2}p^2+\frac{p}{2}=\dim(\mathfrak{u}).
			\end{align}
		\end{enumerate}
		By Lemma \ref{reducible}, $M_{I}(\lambda)$ is irreducible.		
	
	\end{enumerate}

	So	$z=\frac{3}{2}-n+\frac{p}{2}$ is the first reducible point. 

All in all, when $p$ is even, $M_{I}(\lambda)$ is reducible if and only if $z\in(1-n+\frac{p}{2}+\mathbb{Z}_{\ge 0})\cup(\frac{3}{2}-n+\frac{p}{2}+\mathbb{Z}_{\ge 0})=1-n+\frac{p}{2}+\frac{1}{2}\mathbb{Z}_{\ge 0}$.\\

	Finally we consider the case when $p$ is odd and $3p<2n+1$.
	\begin{enumerate}
		\item When $z\in\mathbb{Z}_{\ge 0}$, we will have
		
		${\rm GKdim}\:L(\lambda)=0$.	
		
			\item When $z=-1$, we will have
		
		${\rm GKdim}\:L(\lambda)=2n-1$.
		
			\item When $z=\frac{3}{2}-n+\frac{p}{2}$, we will have
		$$\lambda+\rho=(\frac{p}{2}+1,\dots,-\frac{p}{2}+2,n-p-\frac{1}{2},\dots,\frac{1}{2}).$$
		
		
		Since $p$ is odd, we have the follows.
		\begin{enumerate}
			\item When $3p=2n-1$, we will have
			$$p((\lambda+\rho)^-_{(0)})^{odd}=(\underbrace{1,2,\dots,1,2}_{p-3},1,1,1,1,0,1).$$	
			\begin{align*}
			{\rm GKdim}\:L(\lambda)&=n^2-(2+2+3\cdot 2+\dots+p-5+(p-4)\cdot 2)-(p-3+\dots+p+p+2)\\&=n^2-\frac{(p-3)(p-5)}{4}-\frac{(p-3)^2}{2}-5p+4\\&=n^2-\frac{3}{4}p^2-\frac{17}{4}\\&=2np-\frac{3}{2}p^2+\frac{p}{2}-4=\dim(\mathfrak{u})-4.
			\end{align*}		
			
			\item When $3p<2n-1$, we will have
			$$p((\lambda+\rho)^-_{(0)})^{odd}=(\underbrace{1,2,\dots,1,2}_{p-3},1,1,1,1,1,1,\underbrace{0,1,\dots,0,1}_{2n-3p-3}).$$	
			\begin{align*}
			{\rm GKdim}\:L(\lambda)&=n^2-(2+2+3\cdot 2+\dots+p-5+(p-4)\cdot 2)-(p-3+\dots+p+1+p+2)\\&-(p+4+p+6+\dots+2n-2p-1)\\&=n^2-\frac{(p-3)(p-5)}{4}-\frac{(p-3)^2}{2}-5p+5-\frac{(2n-p+1)(n-\frac{3}{2}p-\frac{1}{2})}{2}\\&=2np-\frac{3}{2}p^2+\frac{p}{2}-3=\dim(\mathfrak{u})-3.
			\end{align*}
		\end{enumerate}

		Hence when $z=\frac{3}{2}-n+\frac{p}{2}$, $M_{I}(\lambda)$ will be reducible.

			\item When $z=\frac{1}{2}-n+\frac{p}{2}$, we will have
		$$\lambda+\rho=(\frac{p}{2},\dots,-\frac{p}{2}+1,n-p-\frac{1}{2},\dots,\frac{1}{2}).$$
		Since $p$ is odd, we have
			$$p((\lambda+\rho)^-_{(0)})^{odd}=(\underbrace{1,2,\dots,1,2}_{p-1},1,1,\underbrace{0,1,\dots,0,1}_{2n-3p-1}).$$	
			\begin{align*}
			{\rm GKdim}\:L(\lambda)&=n^2-(2+2+3\cdot 2+\dots+p-3+(p-2)\cdot 2+p-1)\\&-(p+p+2+p+4+\dots+2n-2p-1)\\&=n^2-\frac{(p-1)(p+1)}{4}-\frac{(p-1)^2}{2}-\frac{(2n-p-1)(n-\frac{3}{2}p+\frac{1}{2})}{2}\\&=2np-\frac{3}{2}p^2+\frac{p}{2}=\dim(\mathfrak{u}).
			\end{align*}	
			So when $z=\frac{1}{2}-n+\frac{p}{2}$, $M_{I}(\lambda)$ is irreducible.
	
		\item When $z\in\frac{1}{2}+\mathbb{Z}$ and $z\ge -\frac{1}{2}$, by(\ref{17}) we have
		
		${\rm GKdim}\:L(\lambda)=2np-2p^2$.

		\item When $z=1-n+\frac{p}{2}$, we will have
				$$(\lambda+\rho)^-_{(0)}=(\frac{p}{2}+\frac{1}{2},\dots,-\frac{p}{2}+\frac{3}{2},\frac{p}{2}-\frac{3}{2},\dots,-\frac{p}{2}-\frac{1}{2}),$$
		$$p((\lambda+\rho)^-_{(0)})^{odd}=(\underbrace{1,1,\dots,1}_{p-2},1,0,1,0).$$	
		\begin{align*}
		{\rm GKdim}\:L(\lambda)&=n^2-(1+2+3+\dots+p-3+p-2+p)-(n-p)^2\\&=n^2-\frac{(p-1)(p-2)}{2}-(n-p)^2-p\\&=2np-\frac{3}{2}p^2+\frac{p}{2}-1=\dim(\mathfrak{u})-1.
		\end{align*}

		Hence when $z=1-n+\frac{p}{2}$, $M_{I}(\lambda)$ is reducible.

		\item When $z=-n+\frac{p}{2}$, we will have
		$$(\lambda+\rho)^-_{(0)}=(\frac{p}{2}-\frac{1}{2},\dots,-\frac{p}{2}+\frac{1}{2},\frac{p}{2}-\frac{1}{2},\dots,-\frac{p}{2}+\frac{1}{2}).$$	
		\begin{align*}
		{\rm GKdim}\:L(\lambda)&=n^2-(1+2+\dots+p-1)-(n-p)^2\\&=n^2-\frac{p(p-1)}{2}-(n-p)^2\\&=2np-\frac{3}{2}p^2+\frac{p}{2}=\dim(\mathfrak{u}).
		\end{align*}	
			
		So when $z=-n+\frac{p}{2}$, $M_{I}(\lambda)$ is irreducible.
	\end{enumerate}

	In other words, $z=1-n+\frac{p}{2}$ and $z=\frac{3}{2}-n+\frac{p}{2}$ are the first two reducible points, and in this case, $M_{I}(\lambda)$ is reducible if and only if $z\in(1-n+\frac{p}{2}+\mathbb{Z}_{\ge 0})\cup(\frac{3}{2}-n+\frac{p}{2}+\mathbb{Z}_{\ge 0})=1-n+\frac{p}{2}+\frac{1}{2}\mathbb{Z}_{\ge 0}$ by Lemma \ref{GKdown}.
	
	So far, we have completed the proof of all cases of the second case of type $B_n$.
\end{proof}
\begin{example}
	Let $\frg=\frso(7, \mathbb{C})$. We consider the parabolic subalgebra $\mathfrak{q}$ corresponding to the subset $I=\Pi\setminus\{\alpha_3\}$. 	In this case, we have $p=3$ and $n=3$. So the scalar generalized Verma module $M_{I}(z\xi_3)$ is reducible if and only if 
	$z\in 1-n+\mathbb{Z}_{\ge 0}=-2+\mathbb{Z}_{\ge 0}$. This result is the same with \cite{He} and \cite{KT}.
	
\end{example}

The arguments of $\mathfrak{sp}(2n,\mathbb{C})$ are similar to that of $\mathfrak{so}(2n+1,\mathbb{C})$. Next, we will discuss the reducibility of the scalar generalized Verma modules of $ \mathfrak{g}=\mathfrak{sp}(2n,\mathbb{C}) $.
\begin{prop}\label{C}
	Let $ \mathfrak{g}=\mathfrak{sp}(2n,\mathbb{C}) $. The scalar generalized Verma module $M_{I}(\lambda)$ is reducible if and only if
\end{prop}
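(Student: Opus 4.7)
The plan is to follow the same strategy as in Proposition \ref{B} for type $B_n$, adapted to the symplectic setting. By Lemma \ref{scalar} and Lemma \ref{GKdown}, it suffices to identify the first reducible point of the scalar family $M_I(z\xi_p)$ for each choice of maximal parabolic $\mathfrak{q}=\mathfrak{q}_{\Pi\setminus\{\alpha_p\}}$, and then use Lemma \ref{reducible} to characterize reducibility via the equation $\gkd L(z\xi_p)=\dim(\mathfrak{u})=2np-\tfrac{3}{2}p^2+\tfrac{1}{2}p$ (respectively $\dim(\mathfrak{u})=\tfrac{n(n+1)}{2}$ for $p=n$, and $2n-1$ for $p=1$). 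I would split into the three cases $p=1$, $2\le p\le n-1$, and $p=n$, just as in the type $B_n$ proof.

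In each case I would compute $\lambda+\rho$ explicitly using $\rho=(n,n-1,\dots,1)$ (the crucial difference with $B_n$, where $\rho$ is shifted by $\tfrac12$) and the fundamental weight $\xi_p=e_1+\cdots+e_p$ (again distinct from type $B_n$ for $p=n$, where $\xi_n=\tfrac12(1,\dots,1)$). The parameter $z$ then partitions into three regimes: $z\in\mathbb{Z}$, $z\in\tfrac12+\mathbb{Z}$, and $z\notin\tfrac12\mathbb{Z}$. I would then apply part (2) of Theorem \ref{GKdim},
\[
\gkd L(\lambda)=n^2-F_B((\lambda+\rho)_{(0)}^-)-F_D((\lambda+\rho)_{(\frac12)}^-)-\sum_{x\in[\lambda]_3}F_A(\tilde{x}),
\]
noting the essential difference from type $B_n$: the half-integer block is measured by $F_D$ rather than $F_B$. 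Because $\rho$ itself is integral, the roles of the integral and half-integral blocks are exchanged relative to $B_n$, which should shift the critical values of $z$ accordingly.

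For each regime, I would carry out the Robinson--Schensted insertion on $(\lambda+\rho)^-$ exactly as in the proof of Proposition \ref{B}, track the shape $p((\lambda+\rho)^-)$ row by row, and read off the appropriate $p_i^{ev}$ or $p_i^{odd}$ counts via \eqref{eq:ev-od}. The first reducible point in each regime is then pinned down by computing $\gkd L(z\xi_p)$ at successive candidate values of $z$ and comparing to $\dim(\mathfrak{u})$. Passing from the first reducible point to the entire reducibility locus is immediate from Lemma \ref{GKdown}.

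The main obstacle, as in Proposition \ref{B}, will be the case $2\le p\le n-1$ where the answer depends on the parity of $p$ and on the inequality between $3p$ and $2n$. Here the Young tableau $P(\lambda)$ changes shape qualitatively across these thresholds, so one must again subdivide into subcases (e.g.\ $3p\le 2n-2$, $3p=2n-1$, $3p\ge 2n$, together with the exceptional $n=p+1$) and verify each GK-dimension identity by a direct but delicate count. I expect the integer and non-integer regimes to yield the usual arithmetic progressions starting at $z=1-n+\tfrac{p}{2}$ or $z=\tfrac{3}{2}-n+\tfrac{p}{2}$ (with the $F_B\leftrightarrow F_D$ swap shifting the half-integer threshold relative to type $B_n$), and these will combine via Lemma \ref{GKdown} to give the full reducibility set stated in the proposition.
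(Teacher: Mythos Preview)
Your proposal is correct and follows essentially the same approach as the paper: the paper also splits into the cases $p=1$, $p=n$, and $2\le p\le n-1$, computes $\lambda+\rho$ with $\rho=(n,n-1,\dots,1)$ and $\xi_p=(1^p,0^{n-p})$, applies the $\mathfrak{sp}$-version of Theorem \ref{GKdim} (with the $F_D$ on the half-integer block), and locates the first reducible point in each $z$-regime by explicit Robinson--Schensted calculations exactly parallel to Proposition \ref{B}. For the middle range $2\le p\le n-1$ the paper in fact omits all details, simply declaring the argument ``almost the same'' as for $B_n$, so your outline is if anything more explicit than the paper's own treatment.
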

\begin{enumerate}
	\item if $p=\mathrm{1}$, then
		$ z\in \mathbb{Z}_{\ge 0};$
	
	\item if $\mathrm{2}\le p\le n-1$, then
	\[	
	z\in\left\{
	\begin{array}{ll}
	\frac{p}{2}-n+\frac{1}{2}+\frac{1}{2}\mathbb{Z}_{\ge 0} &\text{if $p$ is odd or $3p>2n$}\\	     	  	   
	\frac{p}{2}-n+\frac{1}{2}\mathbb{Z}_{\ge 0} &\textnormal{if $p$ is even and\:\:$3p\le 2n$};\\	
	\end{array}	
	\right.
	\]	
	
	\item if $p=n$, then
	$z\in \frac{1}{2}-\frac{n}{2}+\frac{1}{2}\mathbb{Z}_{\ge 0}$.	
\end{enumerate}

\begin{proof}
	First, let's consider the  case $p=1$.
	
	Take  $\Delta^+(\frl)=\{\alpha_2,\dots,\alpha_{n}\}$, where	
	$\alpha_i=e_i-e_{i+1}\:\:1\le i\le n-1$ and $\alpha_n=2e_n$. When $M_I(\lambda)$ is of scalar type, $\lambda=z\xi $ for some $z\in\mathbb{R}$ by Lemma \ref{scalar}.
	
	By the definition of fundamental weight, we can compute
	that $\xi=(1,0,\dots,0)$,
	so
	$$\lambda+\rho=(z+n,n-1,n-1,\dots,1),$$	
	$$(\lambda+\rho)^-=(z+n,\dots,1,-1,\dots,1-n,-z-n).$$ 
	Then we have the follows.
	\begin{enumerate}
		\item When $z\in \mathbb{Z}$ and $z>-1$, 
			$(\lambda+\rho)^-$ is an integral weight.
				$$p((\lambda+\rho)^-)=(\underbrace{1,1,\dots,1}_{2n})\:\:{\rm and}\:\: p((\lambda+\rho)^-)_i^{odd}=(\underbrace{0,1,0,1,\dots,0,1}_{2n}).$$
			
			By Lemma \ref{integral}, we obtain that
				\begin{align*}
		{\rm GKdim}\:L(\lambda)&=n^2-\sum\limits_{i\ge 1}(i-1)p((\lambda+\rho)^-)_i^{odd}\\&=n^2-(0\cdot 0+1\cdot 1+2\cdot 0+3\cdot 1+\dots +(2n-2)\cdot 0+(2n-1)\cdot 1)\\&=n^2-n^2=0<\dim(\mathfrak{u}).
		\end{align*}
		By Lemma \ref{reducible}, $M_{I}(\lambda)$ is reducible.
		\begin{enumerate}	
						\item When $z=-1$, we will have
			$$(\lambda+\rho)^-=(n-1,n-1,\dots,1,-1,\dots,1-n,1-n).$$
			\[
	\tiny{\begin{tikzpicture}[scale=\domscale+0.25,baseline=-45pt]
			\hobox{0}{0}{1}
			\hobox{1}{0}{2} 
			\end{tikzpicture}}\to 
			\tiny{\begin{tikzpicture}[scale=\domscale+0.25,baseline=-45pt]
			\hobox{0}{0}{3}
			\hobox{1}{0}{2}
			\hobox{0}{1}{1} 
			\end{tikzpicture}}\dashrightarrow 
			\tiny{\begin{tikzpicture}[scale=\domscale+0.25,baseline=-45pt]
			\hobox{0}{0}{n}
			\hobox{1}{0}{2}
			\hobox{0}{1}{\vdots}
			\hobox{0}{2}{3}
			\hobox{0}{3}{1}
			\end{tikzpicture}}\dashrightarrow 
			\tiny{\begin{tikzpicture}[scale=\domscale+0.25,baseline=-45pt]
			\hobox{0}{0}{2n-1}
			\hobox{1}{0}{2n}
			\hobox{0}{1}{2n-2}
			\hobox{1}{1}{2}
			\hobox{0}{2}{\vdots}
			\hobox{0}{3}{3}
			\hobox{0}{4}{1}     
			\end{tikzpicture}}=P(\lambda).
			\]
			$${\rm So}\:\:p((\lambda+\rho)^-)=(2,2,\underbrace{1,1,\dots,1}_{2n-4})\:\:{\rm and}\:\: p((\lambda+\rho)^-)^{odd}=(1,1,0,1,\dots,0,1).$$
			\begin{align*}
			{\rm GKdim}\:L(\lambda)&=n^2-\sum\limits_{i\ge 1}(i-1)p((\lambda+\rho)^-)_i^{odd}\\&=n^2-(0\cdot 0+1\cdot 1+2\cdot 0+3\cdot 1+\dots +(2n-4)\cdot 0+(2n-3)\cdot 1)\\&=n^2-n^2+2n-1=\dim(\mathfrak{u}).
			\end{align*}
			By Lemma \ref{reducible}, $M_{I}(\lambda)$ is irreducible. 
		\end{enumerate}
	
		\item When $z\in \frac{1}{2}+\mathbb{Z}$, we will have
		$$(\lambda+\rho)^-=(z+n,\dots,1,-1,\dots,1-n,-z-n),$$
		$$(\lambda+\rho)_{(0)}^-=(n-1,\dots,1,-1,\dots,1-n)\:\:{\rm and}\:\:(\lambda+\rho)_{(\frac{1}{2})}^-=(z+n,-z-n).$$
		
		By Theorem \ref{GKdim}, we get
		\begin{align*}
		{\rm GKdim}\:L(\lambda)&=n^2-(1\cdot 1+2\cdot 0+3\cdot 1+\dots +(2n-4)\cdot 0+(2n-3)\cdot 1)\\&=n^2-n^2+2n-1=\dim(\mathfrak{u}).
		\end{align*}
		By Lemma \ref{reducible},  $M_{I}(\lambda)$ is irreducible.
		
		\item When $z\notin \frac{1}{2}\mathbb{Z}$, we have
		$$(\lambda+\rho)_{(0)}^-=(n-1,\dots,1,-1,\dots,1-n),$$
		$$((\lambda+\rho)_{(0)}^-)^{odd}=(\underbrace{1,1,1,\dots,1}_{2n-2})\:\:{\rm and}\:\: (\lambda+\rho)_{(z)}=(z+n)\in[\lambda]_3.$$
		By Theorem \ref{GKdim}, we get
		\begin{align*}
		{\rm GKdim}\:L(\lambda)&=n^2-(1\cdot 1+2\cdot 0+3\cdot 1+\dots +(2n-4)\cdot 0+(2n-3)\cdot 1)\\&=n^2-n^2+2n-1\\&=2n-1=\dim(\mathfrak{u}).
		\end{align*}
			By Lemma \ref{reducible},  $M_{I}(\lambda)$ is irreducible.
	\end{enumerate}

	In summary, when $p=1$, $M_{I}(\lambda)$ is reducible if and only if $z\in\mathbb{Z}_{\ge 0}$.\\
	
	Now we  consider the case  $p=n$.
	
	Take  $\Delta^+(\frl)=\{\alpha_1,\dots,\alpha_{n-1}\}$, 
where	
	$\alpha_i=e_i-e_{i+1}\:\:1\le i\le n-1$ and $\alpha_n=2e_n$. When $M_I(\lambda)$ is of scalar type, $\lambda=z\xi $ for some $z\in\mathbb{R}$ by Lemma \ref{scalar}.
	
	By the definition of fundamental weight, we can compute
	that $\xi=(1,1,\dots,1)$, then
	$$(\lambda+\rho)^-=(z+n,z+n-1,\dots,z+1,-z-1,\dots,-z+1-n,-z-n).$$
	
First we consider the case when $p=n$ is even.
	\begin{enumerate}
		\item If $z\in\mathbb{Z}$ and $z>-1$,
	 $(\lambda+\rho)^-$ is an integral weight, so
			$$p((\lambda+\rho)^-)=(\underbrace{1,1,\dots,1}_{2n}),$$
	$$
		p((\lambda+\rho)^-)^{odd}=(0,1,0,1,\dots,0,1).
	$$
		By Lemma \ref{integral}, we get
		\begin{align*}
		{\rm GKdim}\:L(\lambda)&=n^2-\sum\limits_{i\ge 1}(i-1)p((\lambda+\rho)^-)_i^{odd}\\&=n^2-(0\cdot 0+1\cdot 1+2\cdot 0+3\cdot 1+\dots +(2n-2)\cdot 0+(2n-1)\cdot 1)\\&=n^2-n^2=0<\dim(\mathfrak{u}).
		\end{align*}
		By Lemma \ref{reducible},  $M_{I}(\lambda)$ is reducible.
		
		\begin{enumerate}
			\item When $z=-1$, we will have
			$$(\lambda+\rho)^-=(n-1,n-2,\dots,0,0,\dots,2-n,1-n).$$
			\[
	\tiny{\begin{tikzpicture}[scale=\domscale+0.25,baseline=-45pt]
			\hobox{0}{0}{n}
			\hobox{0}{1}{n-1}
			\hobox{0}{2}{\vdots}
			\hobox{0}{3}{1} 
			\end{tikzpicture}}\to 
			\tiny{\begin{tikzpicture}[scale=\domscale+0.25,baseline=-45pt]
			\hobox{0}{0}{n}
			\hobox{1}{0}{n+1}
			\hobox{0}{1}{n-1}
			\hobox{0}{2}{\vdots}
			\hobox{0}{3}{1} 
			\end{tikzpicture}}\dashrightarrow 
			\tiny{\begin{tikzpicture}[scale=\domscale+0.25,baseline=-45pt]
			\hobox{0}{0}{2n}
			\hobox{1}{0}{n+1}
			\hobox{0}{1}{2n-1}
			\hobox{0}{2}{\vdots}
			\hobox{0}{3}{1} 
			\end{tikzpicture}}=P(\lambda).
			\]
		So	$p((\lambda+\rho)^-)=(2,\underbrace{1,1,\dots,1)}_{2n-2}\:\:{\rm and}\:\:
			p((\lambda+\rho)^-)^{odd}=(\underbrace{1,1,0,\dots,1,0)}_{2n-1}.$
			\begin{align*}
			{\rm GKdim}\:L(\lambda)&=n^2-\sum\limits_{i\ge 1}(i-1)p((\lambda+\rho)^-)_i^{odd}\\&=n^2-(0\cdot 1+1\cdot 1+2\cdot 0+3\cdot 1+\dots +(2n-3)\cdot 1+(2n-2)\cdot 0)\\&=n^2-(1+3+\dots+2n-3)\\&=n^2-n^2+2n-1=2n-1<\dim(\mathfrak{u}).
			\end{align*}	
			By Lemma \ref{reducible}, $M_{I}(\lambda)$ is reducible.
			
			\item When $z=1-\frac{n}{2}$, we will have
			$$(\lambda+\rho)^-=(\frac{n}{2}+1,\frac{n}{2},\dots,-\frac{n}{2}+2,\frac{n}{2}-2,\dots,-\frac{n}{2},-\frac{n}{2}-1).$$
			\[
	\tiny{\begin{tikzpicture}[scale=\domscale+0.25,baseline=-45pt]
			\hobox{0}{0}{n}
			\hobox{0}{1}{n-1}
			\hobox{0}{2}{\vdots}
			\hobox{0}{3}{1} 
			\end{tikzpicture}}\to 
			\tiny{\begin{tikzpicture}[scale=\domscale+0.25,baseline=-45pt]
			\hobox{0}{0}{n}
			\hobox{1}{0}{n+1}
			\hobox{0}{1}{n-1}
			\hobox{0}{2}{\vdots}
			\hobox{0}{3}{1} 
			\end{tikzpicture}}\to
			\tiny{\begin{tikzpicture}[scale=\domscale+0.25,baseline=-45pt]
			\hobox{0}{0}{n}
			\hobox{1}{0}{n+2}
			\hobox{0}{1}{n-1}
			\hobox{1}{1}{n+1}
			\hobox{0}{2}{\vdots}
			\hobox{0}{3}{1} 
			\end{tikzpicture}}\dashrightarrow 
			\tiny{\begin{tikzpicture}[scale=\domscale+0.25,baseline=-45pt]
			\hobox{0}{0}{n}
			\hobox{1}{0}{2n-3}
			\hobox{0}{1}{n-1}
			\hobox{1}{1}{\vdots}
			\hobox{0}{2}{\vdots}
			\hobox{1}{2}{n+1}
			\hobox{0}{3}{1} 
			\end{tikzpicture}}\dashrightarrow 
			\tiny{\begin{tikzpicture}[scale=\domscale+0.25,baseline=-45pt]
			\hobox{0}{0}{2n}
			\hobox{1}{0}{2n-3}
			\hobox{0}{1}{2n-1}
			\hobox{1}{1}{\vdots}
			\hobox{0}{2}{\vdots}
			\hobox{1}{2}{n+1}
			\hobox{0}{3}{1} 
			\end{tikzpicture}}=P(\lambda).
			\]	
		So	$p((\lambda+\rho)^-)^{odd}=(\underbrace{1,1,\dots,1}_{n-3},1,0,1,0,1,0).$	
			\begin{align}\label{33}
			{\rm GKdim}\:L(\lambda)&=n^2-\sum\limits_{i\ge 1}(i-1)p((\lambda+\rho)^-)_i^{odd}\notag
			\\&=n^2-(1\cdot 1+2\cdot 1+3\cdot 1+\dots +(n-3)\cdot 1+(n-1)\cdot 1+(n+1)\cdot 1)\\
			&=n^2-(1+2+\dots+n-3)-2n\notag\\
			&=n^2-\frac{(n-2)(n-3)}{2}-2n\notag\\
			&=\frac{1}{2}n^2+\frac{1}{2}n-3<\dim(\mathfrak{u})\notag.
			\end{align}
			By Lemma \ref{reducible}, $M_{I}(\lambda)$ is reducible. 
			
			\item When $z=-\frac{n}{2}$, we will have
			$$(\lambda+\rho)^-=(\frac{n}{2},\frac{n}{2}-1,\dots,-\frac{n}{2}+1,\frac{n}{2}-1,\dots,-\frac{n}{2}+1,-\frac{n}{2}).$$
			\[
	\tiny{\begin{tikzpicture}[scale=\domscale+0.25,baseline=-45pt]
			\hobox{0}{0}{n}
			\hobox{0}{1}{n-1}
			\hobox{0}{2}{\vdots}
			\hobox{0}{3}{1} 
			\end{tikzpicture}}\to 
			\tiny{\begin{tikzpicture}[scale=\domscale+0.25,baseline=-45pt]
			\hobox{0}{0}{n}
			\hobox{1}{0}{n+1}
			\hobox{0}{1}{n-1}
			\hobox{0}{2}{\vdots}
			\hobox{0}{3}{1} 
			\end{tikzpicture}}\to 
			\tiny{\begin{tikzpicture}[scale=\domscale+0.25,baseline=-45pt]
			\hobox{0}{0}{n}
			\hobox{1}{0}{n+2}
			\hobox{0}{1}{n-1}
			\hobox{1}{1}{n+1}
			\hobox{0}{2}{\vdots}
			\hobox{0}{3}{1}
			\end{tikzpicture}}\dashrightarrow
			\tiny{\begin{tikzpicture}[scale=\domscale+0.25,baseline=-45pt]
			\hobox{0}{0}{n}
			\hobox{1}{0}{2n-1}
			\hobox{0}{1}{n-1}
			\hobox{1}{1}{2n-2}
			\hobox{0}{2}{\vdots}
			\hobox{1}{2}{\vdots} 
			\hobox{0}{3}{2}
			\hobox{1}{3}{n+1}
			\hobox{0}{4}{1}
			\end{tikzpicture}}\to 
			\tiny{\begin{tikzpicture}[scale=\domscale+0.25,baseline=-45pt]
			\hobox{0}{0}{2n}
			\hobox{1}{0}{2n-1}
			\hobox{0}{1}{n}
			\hobox{1}{1}{2n-2}
			\hobox{0}{2}{\vdots}
			\hobox{1}{2}{\vdots} 
			\hobox{0}{3}{3}
			\hobox{1}{3}{n+1}
			\hobox{0}{4}{2}	
			\hobox{0}{5}{1}
			\end{tikzpicture}}=P(\lambda).
			\]	
		So	$p((\lambda+\rho)^-)^{odd}=(\underbrace{1,1,1,\dots,1}_{n}).$
			\begin{align*}
			{\rm GKdim}\:L(\lambda)&=n^2-\sum\limits_{i\ge 1}(i-1)p((\lambda+\rho)^-)_i^{odd}\\&=n^2-(0\cdot 1+1\cdot 1+2\cdot 1+3\cdot 1+\dots +(n-1)\cdot 1)\\&=n^2-\frac{(n)(n-1)}{2}\\&=\frac{n(n+1)}{2}=\dim(\mathfrak{u}).
			\end{align*}
				By Lemma \ref{reducible},  $M_{I}(\lambda)$ is irreducible.
		\end{enumerate}	
	 So $z=1-\frac{n}{2}$ is the first reducible point.	
		
		\item If $z\notin \frac{1}{2}\mathbb{Z}$, we will have		
		$(\lambda+\rho)_{(0)}^-=(0)$ and $(\lambda+\rho)_{(\frac{1}{2})}^-=(0)$, then
		$$\lambda+\rho=(z+n,z+n-1,\dots,z+1)\in [\lambda]_3.$$	
		
		By Theorem \ref{GKdim}, we get
		\begin{align*}
		{\rm GKdim}\:L(\lambda)&=n^2-\sum\limits _{x\in [\lambda]_3} F_A(\tilde{x})\\&=n^2-(0\cdot 1+1\cdot 1+2\cdot 1+3\cdot 1+\dots +(n-1)\cdot 1)\\&=n^2-\frac{n(n-1)}{2}\\&=\frac{n(n+1)}{2}=\dim(\mathfrak{u}).
		\end{align*}	
		By Lemma \ref{reducible},  $M_{I}(\lambda)$ is irreducible.
		
		\item If  $z\in \frac{1}{2}+\mathbb{Z}$, we have
		$$(\lambda+\rho)^-=(z+n,z+n-1,\dots,z+1,-z-1,\dots,-z+1-n,-z-n),$$	
		$$(\lambda+\rho)_{(0)}^-=(0)\:\:{\rm and}\:\:  (\lambda+\rho)_{(\frac{1}{2})}^-=(\lambda+\rho)^-.$$
		\begin{enumerate}
			
			\item When $z\in \frac{1}{2}+\mathbb{Z}_{\ge -1}$, we will have
			$$p((\lambda+\rho)_{(\frac{1}{2})}^-)^{ev}=(\underbrace{1,0,\dots,1,0}_{2n}).$$
			By Theorem \ref{GKdim}, we get 
			\begin{align*}
			{\rm GKdim}\:L(\lambda)&=n^2-\sum\limits_{i\ge 1}(i-1)p((\lambda+\rho)_{(\frac{1}{2})}^-)^{ev}\\&=n^2-(0\cdot 1+1\cdot 0+2\cdot 1+3\cdot 0+\dots +2(n-1)\cdot 1+(2n-1)\cdot 0)\\&=n^2-2\cdot\frac{n(n-1)}{2}=n<\dim(\mathfrak{u}).
			\end{align*}	
			By Lemma \ref{reducible},  $M_{I}(\lambda)$ is reducible.
			
			\item When $z=-\frac{3}{2}$, we will have
        
			\[
	\tiny{\begin{tikzpicture}[scale=\domscale+0.25,baseline=-45pt]
			\hobox{0}{0}{n}
			\hobox{0}{1}{n-1}
			\hobox{0}{2}{\vdots}
			\hobox{0}{3}{1} 
			\end{tikzpicture}}\to 
			\tiny{\begin{tikzpicture}[scale=\domscale+0.25,baseline=-45pt]
			\hobox{0}{0}{n}
			\hobox{1}{0}{n+1}
			\hobox{0}{1}{n-1}
			\hobox{0}{2}{\vdots}
			\hobox{0}{3}{1} 
			\end{tikzpicture}}\to 
			\tiny{\begin{tikzpicture}[scale=\domscale+0.25,baseline=-45pt]
			\hobox{0}{0}{n}
			\hobox{1}{0}{n+2}
			\hobox{0}{1}{n-1}
			\hobox{1}{1}{n+1}
			\hobox{0}{2}{\vdots}
			\hobox{0}{3}{1}
			\end{tikzpicture}}\to 
			\tiny{\begin{tikzpicture}[scale=\domscale+0.25,baseline=-45pt]
			\hobox{0}{0}{n+3}
			\hobox{1}{0}{n+2}
			\hobox{0}{1}{n}
			\hobox{1}{1}{n+1}
			\hobox{0}{2}{\vdots}
			\hobox{0}{3}{1}    
			\end{tikzpicture}}\dashrightarrow 
			\tiny{\begin{tikzpicture}[scale=\domscale+0.25,baseline=-45pt]
			\hobox{0}{0}{2n}
			\hobox{1}{0}{n+2}
			\hobox{0}{1}{2n-1}
			\hobox{1}{1}{n+1}
			\hobox{0}{2}{\vdots}
			\hobox{0}{3}{1} 
			\end{tikzpicture}}=P(\lambda).
			\]
		So	$p((\lambda+\rho)^-)^{ev}=(\underbrace{1,1,1,0,1,0\dots,1,0}_{2n-2}).$
			\begin{align*}
			{\rm GKdim}\:L(\lambda)&=n^2-\sum\limits_{i\ge 1}(i-1)p((\lambda+\rho)^-)_i^{ev}\\&=n^2-(0\cdot 1+1\cdot 1+2\cdot 1+3\cdot 0+\dots +(2n-4)\cdot 1+(2n-3)\cdot 0)\\&=n^2-(1+2+4+\dots+2n-4)\\&=n^2-1-2\cdot\frac{(n-1)(n-2)}{2}=3n-3<\dim(\mathfrak{u}).
			\end{align*}
			By Lemma \ref{reducible},  $M_{I}(\lambda)$ is reducible.
			
			\item When $z=\frac{1}{2}-\frac{n}{2}$, we will have
			$$(\lambda+\rho)^-=(\frac{n}{2}+\frac{1}{2},\frac{n}{2}-\frac{1}{2},\dots,\frac{3}{2}-\frac{n}{2},\frac{n}{2}-\frac{3}{2},\dots,\frac{1}{2}-\frac{n}{2},-\frac{1}{2}-\frac{n}{2}).$$
			\[
		\tiny{\tiny{\begin{tikzpicture}[scale=\domscale+0.25,baseline=-45pt]
			\hobox{0}{0}{n}
			\hobox{0}{1}{n-1}
			\hobox{0}{2}{\vdots}
			\hobox{0}{3}{1} 
			\end{tikzpicture}}}\to 
			\tiny{\tiny{\begin{tikzpicture}[scale=\domscale+0.25,baseline=-45pt]
			\hobox{0}{0}{n}
			\hobox{1}{0}{n+1}
			\hobox{0}{1}{n-1}
			\hobox{0}{2}{\vdots}
			\hobox{0}{3}{1} 
			\end{tikzpicture}}}\to 
			\tiny{\tiny{\begin{tikzpicture}[scale=\domscale+0.25,baseline=-45pt]
			\hobox{0}{0}{n}
			\hobox{1}{0}{n+2}
			\hobox{0}{1}{n-1}
			\hobox{1}{1}{n+1}
			\hobox{0}{2}{\vdots}
			\hobox{0}{3}{1}
			\end{tikzpicture}}}\to 
		\tiny{	\tiny{\begin{tikzpicture}[scale=\domscale+0.25,baseline=-45pt]
			\hobox{0}{0}{n}
			\hobox{1}{0}{2n-3}
			\hobox{0}{1}{n-1}
			\hobox{1}{1}{2n-4}
			\hobox{0}{2}{\vdots}
			\hobox{1}{2}{\vdots}
			\hobox{0}{3}{4}
			\hobox{1}{3}{n+1}
			\hobox{0}{4}{3}
			\hobox{0}{5}{2}
			\hobox{0}{6}{1}    
			\end{tikzpicture}}}\dashrightarrow 
			\tiny{\tiny{\begin{tikzpicture}[scale=\domscale+0.25,baseline=-45pt]
			\hobox{0}{0}{2n}
			\hobox{1}{0}{2n-2}
			\hobox{0}{1}{2n-1}
			\hobox{1}{1}{2n-3}
			\hobox{0}{2}{n}
			\hobox{1}{2}{2n-4}
			\hobox{0}{3}{\vdots}
			\hobox{1}{3}{\vdots} 
			\hobox{0}{4}{5}
			\hobox{1}{4}{n+1}
			\hobox{0}{5}{4}
			\hobox{0}{6}{3}
			\hobox{0}{7}{2}
			\hobox{0}{8}{1}
			\end{tikzpicture}}}=P(\lambda).
			\]
		So	$p((\lambda+\rho)^-)^{ev}=(\underbrace{1,1,1,\dots,1}_{n-2},1,0,1,0).$
			\begin{align*}
			{\rm GKdim}\:L(\lambda)&=n^2-\sum\limits_{i\ge 1}(i-1)p((\lambda+\rho)^-)_i^{ev}\\&=n^2-(1\cdot 1+2\cdot 1+\dots +(n-3)\cdot 1+(n-2)\cdot 1+(n-1)\cdot 0+n\cdot 1)\\&=n^2-(1+2+\dots+n-2)-n\\&=n^2-\frac{(n-1)(n-2)}{2}-n\\&=\frac{1}{2}n^2+\frac{1}{2}n-1<\dim(\mathfrak{u}).
			\end{align*}
			By Lemma \ref{reducible}, $M_{I}(\lambda)$ is reducible.
			
			\item When $z=-\frac{1}{2}-\frac{n}{2}$, we will have
			$$(\lambda+\rho)^-=(\frac{n}{2}-\frac{1}{2},\frac{n}{2}-\frac{3}{2},\dots,\frac{1}{2}-\frac{n}{2},\frac{n}{2}-\frac{1}{2},\dots,\frac{3}{2}-\frac{n}{2},\frac{1}{2}-\frac{n}{2}).$$
			\[
	\tiny{\begin{tikzpicture}[scale=\domscale+0.25,baseline=-45pt]
			\hobox{0}{0}{n}
			\hobox{0}{1}{n-1}
			\hobox{0}{2}{\vdots}
			\hobox{0}{3}{1} 
			\end{tikzpicture}}\to 
			\tiny{\begin{tikzpicture}[scale=\domscale+0.25,baseline=-45pt]
			\hobox{0}{0}{n}
			\hobox{1}{0}{n+1}
			\hobox{0}{1}{n-1}
			\hobox{0}{2}{\vdots}
			\hobox{0}{3}{1} 
			\end{tikzpicture}}\to 
			\tiny{\begin{tikzpicture}[scale=\domscale+0.25,baseline=-45pt]
			\hobox{0}{0}{n}
			\hobox{1}{0}{n+2}
			\hobox{0}{1}{n-1}
			\hobox{1}{1}{n+1}
			\hobox{0}{2}{\vdots}
			\hobox{0}{3}{1}
			\end{tikzpicture}}\dashrightarrow 
			\tiny{\begin{tikzpicture}[scale=\domscale+0.25,baseline=-45pt]
			\hobox{0}{0}{n}
			\hobox{1}{0}{2n}
			\hobox{0}{1}{n-1}
			\hobox{1}{1}{2n-1}
			\hobox{0}{2}{\vdots}
			\hobox{1}{2}{\vdots}
			\hobox{0}{3}{1} 
			\hobox{1}{3}{n+1}
			\end{tikzpicture}}=P(\lambda).
			\]
		So	$p((\lambda+\rho)^-)^{ev}=(\underbrace{1,1,1,\dots,1}_{n}).$
			\begin{align*}
			{\rm GKdim}\:L(\lambda)&=n^2-\sum\limits_{i\ge 1}(i-1)p((\lambda+\rho)^-)_i^{ev}\\&=n^2-(0\cdot 1+1\cdot 1+2\cdot 1+\dots +(n-3)\cdot 1+(n-2)\cdot 1+(n-1)\cdot 1)\\&=n^2-(1+2+\dots+n-1)\\&=n^2-\frac{n(n-1)}{2}=\frac{n(n+1)}{2}=\dim(\mathfrak{u}).
			\end{align*}
			By Lemma \ref{reducible}, $M_{I}(\lambda)$ is irreducible. And $z=\frac{1}{2}-\frac{n}{2}$ is the first reducible point when $z\in \frac{1}{2}+\mathbb{Z}$.
		\end{enumerate}

		When $p=n$ is odd, the argument is similar. 
		
		In summary, for the case $p=n$, $M_{I}(\lambda)$ is reducible if and only if $z\in\{\frac{1-n}{2}+\mathbb{Z}_{\ge 0}\}\cup\{1-\frac{n}{2}+\mathbb{Z}_{\ge 0}\}=\frac{1}{2}-\frac{n}{2}+\frac{1}{2}\mathbb{Z}_{\ge 0}$.
	\end{enumerate}
	
	Finally, let's consider the  case $2\leq p\leq n-1$. 	Under the circumstances, the arguments of type $B_n$ and type $C_n$ are almost the same, so  we will omit the proof of this part.
\end{proof}

\section{Reducibility of scalar  genralized Verma modules for type $D_n$}
In this section, we consider the type $D_n$. Let $M_{I}(\lambda)$ be a scalar generalized Verma module with highest weight $\lambda=z\xi$, where $\xi=\xi_p$ is the fundamental weight corresponding to the simple root $\alpha_{p}$. We have the following proposition.

\begin{prop}\label{D}
	Let $ \mathfrak{g}=\mathfrak{so}(2n,\mathbb{C}) $. $M_{I}(\lambda)$ is reducible if and only if	  
	
	\begin{enumerate}
		\item if $p=\mathrm{1}$, then
		$\quad z\in 2-n+\mathbb{Z}_{\ge 0}$;
		
		\item if $\mathrm{2}\le p\le n-2$, then
		\[	z\in\left\{
		\begin{array}{ll}
		\frac{3}{2}+\frac{p}{2}-n+\frac{1}{2}\mathbb{Z}_{\ge 0} &\textnormal{if $p$ is odd or}\:\:3p<2n\\	     	  	   
		1+\frac{p}{2}-n+\frac{1}{2}\mathbb{Z}_{\ge 0} &\textnormal{if $p$ is even and}\:\:3p\ge 2n;	
		\end{array}	
		\right.
		\]	
		
		\item if $p=n-1$ or $n$, then
		\[	z\in\left\{
		\begin{array}{ll}
		2-n+\mathbb{Z}_{\ge 0} &\textnormal{if $n$ is even}\\	     	  	   
		3-n+\mathbb{Z}_{\ge 0} &\textnormal{if $n$ is odd.}	
		\end{array}	
		\right.
		\]
	\end{enumerate}		     
	
\end{prop}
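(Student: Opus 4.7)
We follow the template already established in Propositions~\ref{B} and~\ref{C}. By Lemma~\ref{scalar} the highest weight takes the form $\lambda=z\xi_p$, and by Lemma~\ref{GKdown} the quantity $\gkd L(z\xi_p)$ is non-increasing along each coset $z_0+\mathbb{Z}$. Together with Lemma~\ref{reducible}, this means it suffices, in each residue class for $z$ (integral, half-integral, or generic), to pinpoint the \emph{first} reducibility point, i.e.\ the smallest $z$ for which $\gkd L(\lambda)<\dim(\fru)$. Reducibility on the entire arithmetic progression $z_0+\mathbb{Z}_{\ge 0}$ then follows automatically.

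The first step is to record $\xi_p$ in the $\{e_i\}$-basis for each of the three sub-cases, then write down $\lambda+\rho$ and its symmetrization $(\lambda+\rho)^-$. In the generic coset $z\notin\tfrac{1}{2}\mathbb{Z}$, Theorem~\ref{GKdim}(4) expresses $\gkd L(\lambda)$ purely through $F_A(\tilde x)$ for a single generic block $x\in[\lambda]_3$, and a direct computation shows $\gkd L(\lambda)=\dim(\fru)$, so no reducibility occurs. In the two remaining cosets I apply the Robinson--Schensted algorithm to $(\lambda+\rho)^-_{(0)}$ and $(\lambda+\rho)^-_{(1/2)}$, reading off the shape $p=p(x)$ and the even-box sequence $p_i^{ev}$ used in the $D_n$ formula of Theorem~\ref{integral}.

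The three sub-cases require different bookkeeping. For $p=1$ one has $\xi_1=(1,0,\dots,0)$ and $\dim(\fru)=2(n-1)$; the RS shape remains stable until $z=2-n$, at which point a second column forms and $p^{ev}$ changes just enough to make $\gkd L(\lambda)<\dim(\fru)$. For $2\le p\le n-2$ the analysis parallels almost verbatim the $B_n$ calculation in Proposition~\ref{B}, with the main replacements being $p_i^{odd}\leftrightarrow p_i^{ev}$ and $\dim(\fru_B)=2np-\tfrac{3}{2}p^2+\tfrac{1}{2}p$ replaced by $\dim(\fru_D)=2np-\tfrac{3}{2}p^2-\tfrac{1}{2}p$; the parity of $p$ and the comparison between $3p$ and $2n$ determine whether the first drop occurs at $z=1+\tfrac{p}{2}-n$ or at $z=\tfrac{3}{2}+\tfrac{p}{2}-n$. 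For $p=n-1$ or $p=n$, the fundamental weight is $(\tfrac{1}{2},\dots,\tfrac{1}{2},\pm\tfrac{1}{2})$, so $\lambda+\rho$ is integral precisely when $n$ is odd, which yields the dichotomy between $z=2-n$ and $z=3-n$.

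The main obstacle is the RS bookkeeping at the candidate transition values: one must verify that the Young tableau of $(\lambda+\rho)^-$ at the alleged first reducible point has exactly one extra row (or shifted column) relative to its immediate predecessor, so that $\sum_i(i-1)p_i^{ev}$ increases by precisely the amount needed to push $\gkd L(\lambda)$ strictly below $\dim(\fru)$, while at the predecessor the two numbers agree. Once these boundary equalities and inequalities are checked case-by-case for the handful of candidate values listed above, Lemma~\ref{GKdown} propagates reducibility along each $\mathbb{Z}_{\ge 0}$- or $\tfrac{1}{2}\mathbb{Z}_{\ge 0}$-progression and the proposition follows.
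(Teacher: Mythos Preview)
Your plan is essentially the paper's own proof: the paper also treats each case $p=1$, $2\le p\le n-2$, $p\in\{n-1,n\}$ separately, writes down $\xi_p$ and $(\lambda+\rho)^-$, shows via Theorem~\ref{GKdim} that the generic coset $z\notin\tfrac12\mathbb{Z}$ always gives $\gkd L(\lambda)=\dim(\fru)$, and then in each of the remaining cosets locates the first drop by explicit Robinson--Schensted computations at the candidate boundary values, invoking Lemma~\ref{GKdown} to propagate reducibility forward.

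One point to correct: your explanation for the $p\in\{n-1,n\}$ dichotomy is off. With $\xi_p=(\tfrac12,\dots,\tfrac12,\pm\tfrac12)$ and $\rho=(n-1,\dots,1,0)$, the entries of $\lambda+\rho$ lie in $\tfrac{z}{2}+\mathbb{Z}$, so integrality depends on the parity of $z$, not of $n$; moreover the paper shows that for $z\notin\mathbb{Z}$ the module is always irreducible (using $[\lambda]_3$ in Theorem~\ref{GKdim}(4)), so only the coset $z\in\mathbb{Z}$ matters. The even/odd split on $n$ arises instead inside the RS computation of $p((\lambda+\rho)^-)^{ev}$: when $n$ is even the shape at $z=2-n$ already has $\sum(i-1)p_i^{ev}$ large enough to force $\gkd L(\lambda)<\dim(\fru)$, whereas when $n$ is odd the same tableau at $z=2-n$ still yields equality and one must go one step further to $z=3-n$. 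You should trace the tableau at $z=1-n,\,2-n,\,3-n$ for both parities of $n$ (as the paper does for $n$ even) rather than appeal to integrality.
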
	

\begin{proof}
For  $p=1$, we take  $\Delta^+(\frl)=\{\alpha_2,\dots,\alpha_{n}\}$, where	
	$\alpha_i=e_i-e_{i+1}\:\:1\le i\le n-1$ and $\alpha_n=e_{n-1}+e_{n+1}$. When $M_I(\lambda)$ is of scalar type, we know that $\lambda=z\xi $ for some $z\in\mathbb{R}$ by Lemma \ref{scalar}.
	
	By the definition of fundamental weight, we can compute
	that $\xi=(1,0,\dots,0)$, (see details in\cite{KN}),
so
	$$\lambda+\rho=(z+n-1,n-2,\dots,0),$$
	$$(\lambda+\rho)^-=(z+n-1,n-2,\dots,0,0,\dots,2-n,1-n-z).$$
	
	Then we have the follows.
	\begin{enumerate}
		\item When $z\in\mathbb{Z}$ and $z>-1$, $\lambda+\rho$ is  integral.
		$$p((\lambda+\rho)^-)=(1,\underbrace{0,1,0,1,\dots,0,1}_{2n-2}).$$		
		By Lemma \ref{integral}, we get 
		\begin{align*}
		 {\rm GKdim}\:L(\lambda)&= n^2-n-\sum\limits_{i\ge 1}(i-1)p((\lambda+\rho)^-)_i^{ev}\\&=n^2-n-(0\cdot 1+1\cdot 0+2\cdot 1+\dots+(2n-2)\cdot 1)\\&=n^2-n-2(1+2+\dots+n-1)\\&=n^2-n-2\cdot\frac{n(n-1)}{2}=0<\dim(\mathfrak{u}).		
		\end{align*}
		By Lemma \ref{reducible}, $M_{I}(\lambda)$ is reducible.
		\begin{enumerate}
			
			\item When $z=-1$, we will have
			$$(\lambda+\rho)^-=(n-2,n-2,\dots,0,0,\dots,2-n,2-n).$$		
			\[
	\tiny{\begin{tikzpicture}[scale=\domscale+0.25,baseline=-45pt]
			\hobox{0}{0}{n}
			\hobox{1}{0}{2}
			\hobox{0}{1}{n-1}
			\hobox{0}{2}{\vdots}
			\hobox{0}{3}{1} 
			\end{tikzpicture}}\to 
			\tiny{\begin{tikzpicture}[scale=\domscale+0.25,baseline=-45pt]
			\hobox{0}{0}{n}
			\hobox{1}{0}{n+1}
			\hobox{0}{1}{n-1}
			\hobox{1}{1}{2}
			\hobox{0}{2}{\vdots}
			\hobox{0}{3}{1} 
			\end{tikzpicture}}\dashrightarrow 
			\tiny{\begin{tikzpicture}[scale=\domscale+0.25,baseline=-45pt]
			\hobox{0}{0}{2n-1}
			\hobox{1}{0}{n+1}
			\hobox{0}{1}{2n-2}
			\hobox{1}{1}{2}
			\hobox{0}{2}{\vdots}
			\hobox{0}{3}{1}
			\end{tikzpicture}}\to 
			\tiny{\begin{tikzpicture}[scale=\domscale+0.25,baseline=-45pt]
			\hobox{0}{0}{2n-1}
			\hobox{1}{0}{2n}
			\hobox{0}{1}{2n-2}
			\hobox{1}{1}{n+1}
			\hobox{0}{2}{2n-3}
			\hobox{1}{2}{2}
			\hobox{0}{3}{\vdots}
			\hobox{0}{4}{1}    
			\end{tikzpicture}} =P(\lambda).
			\]
	So		$p((\lambda+\rho)^-)^{ev}=(1,1,1,\underbrace{0,1,0,1,\dots,0,1}_{2n-6}).$
			\begin{align*}
			{\rm GKdim}\:L(\lambda)&=n^2-n-\sum\limits_{i\ge 1}(i-1)p((\lambda+\rho)^-)_i^{ev}\\&=n^2-n-(0\cdot 1+1\cdot 1+2\cdot 1+\dots +(2n-5)\cdot 0+(2n-4)\cdot 1\\&=n^2-n-1-2(1+2+\dots+n-2)\\&=n^2-n-1-2\cdot\frac{(n-1)(n-2)}{2}=2n-3<\dim(\mathfrak{u}).
			\end{align*}
			By Lemma \ref{reducible}, $M_{I}(\lambda)$ is reducible.
			
			\item When $z=2-n$, we will have
			$$(\lambda+\rho)^-=(1,n-2,\dots,0,0,\dots,2-n,-1).$$	
			\[
	\tiny{\begin{tikzpicture}[scale=\domscale+0.25,baseline=-45pt]
			\hobox{0}{0}{1}
			\hobox{1}{0}{4}
			\hobox{0}{1}{3}
			\hobox{0}{2}{2} 
			\end{tikzpicture}}\dashrightarrow 
			\tiny{\begin{tikzpicture}[scale=\domscale+0.25,baseline=-45pt]
			\hobox{0}{0}{1}
			\hobox{1}{0}{n-2}
			\hobox{0}{1}{n-1}
			\hobox{0}{2}{\vdots}
			\hobox{0}{3}{2} 
			\end{tikzpicture}}\to
			\tiny{\begin{tikzpicture}[scale=\domscale+0.25,baseline=-45pt]
			\hobox{0}{0}{1}
			\hobox{1}{0}{n-1}
			\hobox{0}{1}{n-2}
			\hobox{0}{2}{\vdots}
			\hobox{0}{3}{2}
			\end{tikzpicture}}\dashrightarrow 
			\tiny{\begin{tikzpicture}[scale=\domscale+0.25,baseline=-45pt]
			\hobox{0}{0}{2n-1}
			\hobox{1}{0}{n+1}
			\hobox{0}{1}{2n-2}
			\hobox{1}{1}{n-1}
			\hobox{0}{2}{\vdots}
			\hobox{0}{3}{2}    
			\end{tikzpicture}}\to
			\tiny{\begin{tikzpicture}[scale=\domscale+0.25,baseline=-45pt]
			\hobox{0}{0}{2n-1}
			\hobox{1}{0}{2n}
			\hobox{0}{1}{2n-2}
			\hobox{1}{1}{n+1}
			\hobox{0}{2}{2n-3}
			\hobox{1}{2}{n-1}
			\hobox{0}{3}{\vdots}
			\hobox{0}{4}{2}  
			\end{tikzpicture}}=P(\lambda).	 
			\]
		So	$p((\lambda+\rho)^-)^{ev}=(1,1,1,\underbrace{0,1,0,1,\dots,0,1}_{2n-6}).$
			\begin{align*}
			{\rm GKdim}\:L(\lambda)&=n^2-n-\sum\limits_{i\ge 1}(i-1)p((\lambda+\rho)^-)_i^{ev}\\&=n^2-n-(0\cdot 1+1\cdot 1+2\cdot 1+\dots +(2n-5)\cdot 0+(2n-4)\cdot 1\\&=n^2-n-1-2(1+2+\dots+n-2)\\&=n^2-n-1-2\cdot\frac{(n-1)(n-2)}{2}=2n-3<\dim(\mathfrak{u}).
			\end{align*}
			By Lemma \ref{reducible}, $M_{I}(\lambda)$ is reducible.
			
			\item When $z=1-n$, we will have
$$(\lambda+\rho)^-=(0,n-2,\dots,0,0,\dots,2-n,0).$$
			\[
	\tiny{\begin{tikzpicture}[scale=\domscale+0.25,baseline=-45pt]
			\hobox{0}{0}{1}
			\hobox{1}{0}{3}
			\hobox{0}{1}{2}
			\end{tikzpicture}}\dashrightarrow 
			\tiny{\begin{tikzpicture}[scale=\domscale+0.25,baseline=-45pt]
			\hobox{0}{0}{1}
			\hobox{1}{0}{n}
			\hobox{0}{1}{n-1}
			\hobox{0}{2}{\vdots}
			\hobox{0}{3}{2} 
			\end{tikzpicture}}\to
			\tiny{\begin{tikzpicture}[scale=\domscale+0.25,baseline=-45pt]
			\hobox{0}{0}{1}
			\hobox{1}{0}{n}
			\hobox{2}{0}{n+1}
			\hobox{0}{1}{n-1}
			\hobox{0}{2}{\vdots}
			\hobox{0}{3}{2}
			\end{tikzpicture}}\dashrightarrow
			\tiny{\begin{tikzpicture}[scale=\domscale+0.25,baseline=-45pt]
			\hobox{0}{0}{2n-1}
			\hobox{1}{0}{n}
			\hobox{2}{0}{n+1}
			\hobox{3}{0}{2n}
			\hobox{0}{1}{2n-2}
			\hobox{0}{2}{\vdots}
			\hobox{0}{3}{2}  
			\end{tikzpicture}}=P(\lambda).	 
			\]
		So	$p((\lambda+\rho)^-)^{ev}=(2,\underbrace{0,1,0,1,\dots,0,1}_{2n-4}).$
			\begin{align*}
			{\rm GKdim}\:L(\lambda)&=n^2-n-\sum\limits_{i\ge 1}(i-1)p((\lambda+\rho)^-)_i^{ev}\\&=n^2-n-(0\cdot 2+1\cdot 0+2\cdot 1+\dots +(2n-5)\cdot 0+(2n-4)\cdot 1)\\&=n^2-n-2(1+2+\dots+n-2)\\&=n^2-n-2\cdot\frac{(n-1)(n-2)}{2}=2n-2=\dim(\mathfrak{u}).
			\end{align*}
			By Lemma \ref{reducible}, $M_{I}(\lambda)$ is irreducible.
		\end{enumerate}
		 So $z=2-n$ is the first reducible point when $z\in \mathbb{Z}$.
		
		\item When $z\in\frac{1}{2}+\mathbb{Z}$, we have
		$$p((\lambda+\rho)^-_{(0)})^{ev}=(1,\underbrace{0,1,\dots,0,1}_{2n-4})\:\:\text{and}\:\:	p((\lambda+\rho)^-_{(\frac{1}{2})})^{ev}=(1).$$
		By Theorem \ref{GKdim}, we get
		\begin{align*}
		{\rm GKdim}\:L(\lambda)&=n^2-n-(0\cdot 1+1\cdot 0+2\cdot 1+\dots +(2n-5)\cdot 0+(2n-4)\cdot 1)-(0\cdot 1)\\&=n^2-n-2(1+2+\dots+n-2)\\&=n^2-n-2\cdot\frac{(n-1)(n-2)}{2}=2n-2=\dim(\mathfrak{u}).
		\end{align*}
		By Lemma \ref{reducible}, $M_{I}(\lambda)$ is irreducible.
		
		\item When $z\notin\frac{1}{2}\mathbb{Z}$, we have
		$$p((\lambda+\rho)^-_{(0)})^{ev}=(1,\underbrace{0,1,\dots,0,1}_{2n-4})\:\:\text{and}\:\:	(\lambda+\rho)_{(z)}=(z+n-1)\in[\lambda]_3.$$
		
	By Theorem \ref{GKdim}, we get	
		\begin{align*}
		{\rm GKdim}\:L(\lambda)&=n^2-n-(0\cdot 1+1\cdot 0+2\cdot 1+\dots +(2n-5)\cdot 0+(2n-4)\cdot 1)-(0\cdot 1)\\&=n^2-n-2(1+2+\dots+n-2)\\&=n^2-n-2\cdot\frac{(n-1)(n-2)}{2}=2n-2=\dim(\mathfrak{u}).
		\end{align*}
	By Lemma \ref{reducible}, $M_{I}(\lambda)$ is irreducible.
\end{enumerate}
	
	 Combining (2) and (3), we can get a conclusion that if $z\notin\mathbb{Z}$, $M_{I}(\lambda)$ is irreducible.
	
	In summary, when $p=1$, $M_{I}(\lambda)$ is reducible if and only if $z\in 2-n+\mathbb{Z}_{\ge 0}$.\\
	
Now we consider the case $p=n-1$.
	
	Take  $\Delta^+(\frl)=\{\alpha_1,\dots,\alpha_{n-2},\alpha_{n}\}$, 
where $\alpha_i=e_i-e_{i+1}\:\:1\le i\le n-1$ and $\alpha_n=e_{n-1}+e_{n+1}$. When $M_I(\lambda)$ is of scalar type, $\lambda=z\xi $ for some $z\in\mathbb{R}$ by Lemma \ref{scalar}.
	
	By the definition of fundamental weight, we can compute
	that $\xi=(\frac{1}{2},\frac{1}{2},\dots,\frac{1}{2})$, (see details in\cite{KN}), then
	$$(\lambda+\rho)^-=(\frac{1}{2}z+n-1,\dots,\frac{1}{2}z,-\frac{1}{2}z,\dots,1-n-\frac{1}{2}z).$$
	First we consider the case when $n$ is even.
	\begin{enumerate}
		\item If $z\in \mathbb{Z}$ and $z>0$,		
		$\lambda+\rho$ is integral and $p((\lambda+\rho)^-)^{ev}=(1,0,\dots,1,0)$.
		
		By Lemma \ref{integral}, we get
		\begin{align*}
		{\rm GKdim}\:L(\lambda)&= n^2-n-\sum\limits_{i\ge 1}(i-1)p((\lambda+\rho)^-)_i^{ev}\\&=n^2-n-(0\cdot 1+1\cdot 0+2\cdot 1+\dots+(2n-2)\cdot 1)\\&=n^2-n-2(1+2+\dots+n-1)\\&=n^2-n-2\cdot\frac{n(n-1)}{2}=0<\dim(\mathfrak{u}).
		\end{align*}
		So $M_{I}(\lambda)$ is reducible.
				
		\begin{enumerate}
			\item When $z=0$, we will have
			\begin{align}\label{5}
			(\lambda+\rho)^-=(n-1,n-2,\dots,0,0,\dots,2-n,1-n),
			\end{align}
			$$p((\lambda+\rho)^-)^{ev}=(1,\underbrace{0,1,\dots,0,1}_{2n-2}).$$
		By Theorem \ref{GKdim}, we get
			\begin{align*}
			{\rm GKdim}\:L(\lambda)&= n^2-n-\sum\limits_{i\ge 1}(i-1)p((\lambda+\rho)^-)_i^{ev}\\&=n^2-n-(0\cdot 1+1\cdot 0+2\cdot 1+\dots+(2n-2)\cdot 1)\\&=n^2-n-2(1+2+\dots+n-1)\\&=n^2-n-2\cdot\frac{n(n-1)}{2}=0<\dim(\mathfrak{u}).	
			\end{align*}
			So $M_{I}(\lambda)$ is reducible.	
			
			\item When $z=2-n$, we will have
			$$(\lambda+\rho)^-=(\frac{1}{2}n,\frac{1}{2}n-1,\dots,1-\frac{1}{2}n,\frac{1}{2}n-1,\dots,1-\frac{1}{2}n,-\frac{1}{2}n).$$	
			\[
	\tiny{\begin{tikzpicture}[scale=\domscale+0.25,baseline=-45pt]
			\hobox{0}{0}{n}
			\hobox{0}{1}{n-1}
			\hobox{0}{2}{\vdots}
			\hobox{0}{3}{1} 
			\end{tikzpicture}}\to 
			\tiny{\begin{tikzpicture}[scale=\domscale+0.25,baseline=-45pt]
			\hobox{0}{0}{n}
			\hobox{1}{0}{n+1}
			\hobox{0}{1}{n-1}
			\hobox{0}{2}{\vdots}
			\hobox{0}{3}{1} 
			\end{tikzpicture}}\to 
			\tiny{\begin{tikzpicture}[scale=\domscale+0.25,baseline=-45pt]
			\hobox{0}{0}{n}
			\hobox{1}{0}{n+2}
			\hobox{0}{1}{n-1}
			\hobox{1}{1}{n+1}
			\hobox{0}{2}{\vdots}
			\hobox{0}{3}{1}
			\end{tikzpicture}}\dashrightarrow 
			\tiny{\begin{tikzpicture}[scale=\domscale+0.25,baseline=-45pt]
			\hobox{0}{0}{n}
			\hobox{1}{0}{2n-1}
			\hobox{0}{1}{n-1}
			\hobox{1}{1}{2n-2}
			\hobox{0}{2}{\vdots}
			\hobox{1}{2}{\vdots} 
			\hobox{0}{3}{2}
			\hobox{1}{3}{n+1}
			\hobox{0}{4}{1}
			\end{tikzpicture}}\to 
			\tiny{\begin{tikzpicture}[scale=\domscale+0.25,baseline=-45pt]
			\hobox{0}{0}{2n}
			\hobox{1}{0}{2n-1}
			\hobox{0}{1}{n}
			\hobox{1}{1}{2n-2}
			\hobox{0}{2}{\vdots}
			\hobox{1}{2}{\vdots} 
			\hobox{0}{3}{3}
			\hobox{1}{3}{n+1}
			\hobox{0}{4}{2}	
			\hobox{0}{5}{1}
			\end{tikzpicture}}=P(\lambda).
			\]
		So	$p(\lambda+\rho)^-=(\underbrace{2,2,\dots,2}_{n-1},1,1)\:\:{\rm and}\: \:p((\lambda+\rho)^-)^{ev}=(\underbrace{1,1,\dots,1}_{n-1},0,1).$
			\begin{align*}
			{\rm GKdim}\:L(\lambda)&= n^2-n-\sum\limits_{i\ge 1}(i-1)p((\lambda+\rho)^-)_i^{ev}\\&=n^2-n-(0\cdot 1+1\cdot 1+2\cdot 0+\dots+(n-2)\cdot 1+(n-1)\cdot 0+n\cdot 1)\\&=n^2-n-(1+2+\dots+n-2)-n\\&=n^2-n-\frac{(n-1)(n-2)}{2}-n\\&=\frac{1}{2}n^2-\frac{1}{2}n-1<\frac{1}{2}n^2-\frac{1}{2}n=\dim(\mathfrak{u}).	
			\end{align*}
		So $M_{I}(\lambda)$ is reducible.
			
			\item When $z=1-n$, we will have
			$$(\lambda+\rho)^-=(\frac{1}{2}n-\frac{1}{2},\dots,\frac{1}{2}-\frac{1}{2}n,\frac{1}{2}-\frac{1}{2}n,\dots,\frac{1}{2}-\frac{1}{2}n).$$
			\[
	\tiny{\begin{tikzpicture}[scale=\domscale+0.25,baseline=-45pt]
			\hobox{0}{0}{n}
			\hobox{0}{1}{n-1}
			\hobox{0}{2}{\vdots}
			\hobox{0}{3}{1} 
			\end{tikzpicture}}\to 
			\tiny{\begin{tikzpicture}[scale=\domscale+0.25,baseline=-45pt]
			\hobox{0}{0}{n}
			\hobox{1}{0}{n+1}
			\hobox{0}{1}{n-1}
			\hobox{0}{2}{\vdots}
			\hobox{0}{3}{1} 
			\end{tikzpicture}}\to 
			\tiny{\begin{tikzpicture}[scale=\domscale+0.25,baseline=-45pt]
			\hobox{0}{0}{n}
			\hobox{1}{0}{n+2}
			\hobox{0}{1}{n-1}
			\hobox{1}{1}{n+1}
			\hobox{0}{2}{\vdots}
			\hobox{0}{3}{1}
			\end{tikzpicture}}\dashrightarrow 
			\tiny{\begin{tikzpicture}[scale=\domscale+0.25,baseline=-45pt]
			\hobox{0}{0}{n}
			\hobox{1}{0}{2n}
			\hobox{0}{1}{n-1}
			\hobox{1}{1}{2n-1}
			\hobox{0}{2}{\vdots}
			\hobox{1}{2}{\vdots} 
			\hobox{0}{3}{1}
			\hobox{1}{3}{n+1}
			\end{tikzpicture}}=P(\lambda).
			\]
		So	$p((\lambda+\rho)^-)^{ev}=(1,1,\dots,1).$
			\begin{align*}
			{\rm GKdim}\:L(\lambda)&= n^2-n-\sum\limits_{i\ge 1}(i-1)p((\lambda+\rho)^-)_i^{ev}\\&=n^2-n-(0\cdot 1+1\cdot 1+2\cdot 1+\dots+(n-1)\cdot 1)\\&=n^2-n-(1+2+\dots+n-1)\\&=n^2-n-\frac{n(n-1)}{2}\\&=\frac{1}{2}n^2-\frac{1}{2}n=\dim(\mathfrak{u}).	
			\end{align*}
			So 	 $M_{I}(\lambda)$ is irreducible. 
		\end{enumerate}
	Thus $z=2-n$ is the first reducible point when $z\in\mathbb{Z}$.
		
		\item If $z\notin\mathbb{Z}$, we have
		$$(\lambda+\rho)^-=(\frac{1}{2}z+n-1,\dots,\frac{1}{2}z,-\frac{1}{2}z,\dots,1-n-\frac{1}{2}z),$$
		$$p((\lambda+\rho)^-_{(\frac{1}{2})})^{ev}=	p((\lambda+\rho)^-_{(0)})^{ev}=(0)\:\:\text{and}\:\:(\lambda+\rho)=(\frac{1}{2}z+n-1,\dots,\frac{1}{2}z)\in[\lambda]_3.$$
		By Theorem \ref{GKdim}, we get
		\begin{align*}
		{\rm GKdim}\:L(\lambda)&= n^2-n-(0\cdot 1+1\cdot 1+2\cdot 1+\dots+(n-1)\cdot 1)\\&=n^2-n-(1+2+\dots+n-1)\\&=n^2-n-\frac{n(n-1)}{2}\\&=\frac{1}{2}n^2-\frac{1}{2}n=\dim(\mathfrak{u}).	
		\end{align*}
		So $M_{I}(\lambda)$ is irreducible by Lemma \ref{reducible}.
	\end{enumerate}	
	
To sum up, when $n$ is even, $M_{I}(\lambda)$ is reducible if and only if $z\in2-n+\mathbb{Z}_{\ge 0}$.\\
	
	When $n$ is odd,  the arguments are similar to the case when $n$ is even. Finally, we can get the result that $M_{I}(\lambda)$ is reducible if and only if $z\in3-n+\mathbb{Z}_{\ge 0}$. We omit the details here.\\

	All in all, when $p=n-1$, $M_{I}(\lambda)$ is reducible if and only if	
	\[	z\in\left\{
	\begin{array}{ll}
	2-n+\mathbb{Z}_{\ge 0} &\text{if $n$ is  even}\\	     	  	   
	3-n+\mathbb{Z}_{\ge 0} &\text{if $n$ is odd}.	
	\end{array}	
	\right.
	\]
	
	The arguments of the case when $p=n$ are similar to that of $p=n-1$, we omit the details here.\\

	Finally we consider the case when $2\le p\le n-2$.
	
	First we consider the case when p is  even and $3p\ge 2n$. 
	\begin{enumerate}
		\item If $z\in\mathbb{Z}$ and $z > -1$, we have
			$$(\lambda+\rho)^-=(z+n-1,\dots,z+n-p,n-p-1,\dots,0,0,\dots,-n+p+1,\dots,-z-n+1),$$	
		$$p((\lambda+\rho)^-)^{ev}=(1,\underbrace{0,1,\dots,0,1}_{2n-2},1).$$	
		By 	Theorem \ref{integral}, we get 
		\begin{align}\label{7}
		{\rm GKdim}\:L(\lambda)\notag&= n^2-n-(0\cdot 1+1\cdot 0+2\cdot 1+\dots+(2n-2)\cdot 1)\notag\\&=n^2-n-2(1+2+\dots+n-1)\notag\\&=n^2-n-2\cdot\frac{n(n-1)}{2}=0<\dim(\mathfrak{u}).
		\end{align}
		By Lemma \ref{reducible}, $M_{I}(\lambda)$ is reducible.	
		\begin{enumerate}	
			\item When $z=-1$, we will have	
			$$(\lambda+\rho)^-=(n-2,\dots,n-p-1,n-p-1,\dots,0,0,\dots,-n+p+1,\dots,-n+2).$$	
			\[
		\tiny{\begin{tikzpicture}[scale=\domscale+0.25,baseline=-45pt]
			\hobox{0}{0}{p}
			\hobox{1}{0}{p+1}
			\hobox{0}{1}{p-1}
			\hobox{0}{2}{\vdots}
			\hobox{0}{3}{1} 
			\end{tikzpicture}}\dashrightarrow  
			\tiny{\begin{tikzpicture}[scale=\domscale+0.25,baseline=-45pt]
			\hobox{0}{0}{n}
			\hobox{1}{0}{p+1}
			\hobox{0}{1}{n-1}
			\hobox{0}{2}{\vdots}
			\hobox{0}{3}{1} 
			\end{tikzpicture}}\to 
			\tiny{\begin{tikzpicture}[scale=\domscale+0.25,baseline=-45pt]
			\hobox{0}{0}{n}
			\hobox{1}{0}{n+1}
			\hobox{0}{1}{n-1}
			\hobox{1}{1}{p+1}
			\hobox{0}{2}{n-2}
			\hobox{0}{3}{\vdots}
			\hobox{0}{4}{1} 
			\end{tikzpicture}}\dashrightarrow 
			\tiny{\begin{tikzpicture}[scale=\domscale+0.25,baseline=-45pt]
			\hobox{0}{0}{k}
			\hobox{1}{0}{k+1}
			\hobox{0}{1}{k-1}
			\hobox{1}{1}{n+1}
			\hobox{0}{2}{\vdots}
			\hobox{1}{2}{p+1} 
			\hobox{0}{3}{1}
			\end{tikzpicture}}\dashrightarrow 
			\tiny{\begin{tikzpicture}[scale=\domscale+0.25,baseline=-45pt]
			\hobox{0}{0}{2n}
			\hobox{1}{0}{k+1}
			\hobox{0}{1}{2n-1}
			\hobox{1}{1}{n+1}
			\hobox{0}{2}{\vdots}
			\hobox{1}{2}{p+1} 
			\hobox{0}{3}{1}
			\end{tikzpicture}}=P(\lambda).
			\]	
			In the above Young tableau, $k=2n-p$.	
		
		So	$p((\lambda+\rho)^-)^{ev}=(1,1,1,\underbrace{0,1,\dots,0,1}_{2n-6},1).$	
			
			By 	Theorem \ref{integral}, we get
			\begin{align*}
			{\rm GKdim}\:L(\lambda)&= n^2-n-(0\cdot 1+1\cdot 1+2\cdot 1+\dots+(2n-4)\cdot 1)\\&=n^2-n-1-2(1+2+\dots+n-2)\\&=n^2-n-1-2\cdot\frac{(n-1)(n-2)}{2}=2n-3<\dim(\mathfrak{u}).	
			\end{align*}
			By Lemma \ref{reducible}, $M_{I}(\lambda)$ is reducible.	
			
			\item When $z=\frac{p}{2}-n+1$, we will have
			$$(\lambda+\rho)^-=(\frac{p}{2},\dots,-\frac{p}{2}+1,n-p-1,\dots,0,0,\dots,-n+p+1,\frac{p}{2}-1,\dots,-\frac{p}{2}),$$
			$$p((\lambda+\rho)^-)^{ev}=(2,\underbrace{1,2,\dots,1,2}_{2n-2p-2},\underbrace{1,\dots,1}_{3p-2n},0,1).$$	
			By 	Theorem \ref{integral}, we get 
			\begin{align}\label{8}
			{\rm GKdim}\:L(\lambda)\notag&= n^2-n-(1\cdot 1+2\cdot 2+\dots+(2n-2p-2)\cdot 2)-(2n-2p-1+\dots+p-2)-p\notag\\&=n^2-n-(n-p-1)^2-4\cdot\frac{(n-p)(n-p-1)}{2}-\frac{(2n-p-3)(3p-2n)}{2}-p\notag\\&=2np-\frac{3}{2}p^2-\frac{1}{2}p-1=\dim(\mathfrak{u})-1<\dim(\mathfrak{u}).	
			\end{align}
			By Lemma \ref{reducible}, $M_{I}(\lambda)$ is reducible.
			
			\item When $z=\frac{p}{2}-n$, we will have
			$$(\lambda+\rho)^-=(\frac{p}{2}-1,\dots,-\frac{p}{2},n-p-1,\dots,0,0,\dots,-n+p+1,\frac{p}{2},\dots,-\frac{p}{2}+1),$$
			$$p((\lambda+\rho)^-)^{ev}=(2,\underbrace{1,2,\dots,1,2}_{2n-2p-2},\underbrace{1,\dots,1}_{3p-2n+1}).$$	
			By 	Theorem \ref{integral}  and (\ref{8}), we get
			\begin{align*}
			{\rm GKdim}\:L(\lambda) &=n^2-n-(n-p-1)^2-4\cdot C^2_{n-p}-(2n-2p-1+\dots+p-2)-p+1\\&=2np-\frac{3}{2}p^2-\frac{1}{2}p=\dim(\mathfrak{u}).	
			\end{align*}
			By Lemma \ref{reducible}, $M_{I}(\lambda)$ is irreducible.
		\end{enumerate}
		
		So $z=\frac{p}{2}-n+1$ is the first reducible point when $z\in \mathbb{Z}$.	
		
		\item If $z\notin\frac{1}{2}\mathbb{Z}$, we have	
		$$(\lambda+\rho)^-_{(0)}=(n-p-1,\dots,0,0,\dots,-n+p+1)\text{~and~} (\lambda+\rho)_{(z)}=(z+n-1,\dots,z+n-p)\in[\lambda]_3.$$	
		\begin{align}\label{9}
		{\rm GKdim}\:L(\lambda) &=n^2-n-(1+\dots+p-1)-(2+\dots+2n-2p-2)\notag\\&=n^2-n-\frac{p(p-1)}2-2\cdot\frac{(n-p)(n-p-1)}{2}\notag\\&=2pn-\frac{3}{2}p^2-\frac{1}{2}p=\dim(\mathfrak{u}).	
		\end{align}
			So $M_{I}(\lambda)$ is irreducible.

		\item If  $z\in\frac{1}{2}+\mathbb{Z}$ and $z\ge p-n$, we have	
		\begin{align}\label{10}
		p((\lambda+\rho)^-_{(0)})^{ev}=(1,\underbrace{0,1,\dots,0,1}_{2n-2p-2})\text{~and~}	p((\lambda+\rho)^-_{(\frac{1}{2})})^{ev}=(\underbrace{1,0,\dots,1,0}_{2p}).
		\end{align}	
		\begin{align*}
		{\rm GKdim}\:L(\lambda) &=n^2-n-(1+\dots+2p-2)-(2+\dots+2n-2p-2)\\&=n^2-n-2\cdot\frac{p(p-1)}2-2\cdot\frac{(n-p)(n-p-1)}{2}\\&=2pn-\frac{3}{2}p^2-\frac{1}{2}p-C^2_p<\dim(\mathfrak{u}).	
		\end{align*}
		By Lemma \ref{reducible}, $M_{I}(\lambda)$ is reducible.
		 
		\begin{enumerate}
			\item When $z=\frac{p}{2}-n+1+\frac{1}{2}$, we will have
			$$(\lambda+\rho)^-=(\frac{p}{2}+\frac{1}{2},\dots,-\frac{p}{2}+\frac{3}{2},n-p-1,\dots,0,0,\dots,-n+p+1,\frac{p}{2}-\frac{3}{2},\dots,-\frac{p}{2}-\frac{1}{2}).$$
			\[
		\tiny{\begin{tikzpicture}[scale=\domscale+0.25,baseline=-45pt]
			\hobox{0}{0}{p}
			\hobox{1}{0}{p+1}
			\hobox{0}{1}{p-1}
			\hobox{0}{2}{\vdots}
			\hobox{0}{3}{1} 
			\end{tikzpicture}}\dashrightarrow  
			\tiny{\begin{tikzpicture}[scale=\domscale+0.25,baseline=-45pt]
			\hobox{0}{0}{p}
			\hobox{1}{0}{2p-2}
			\hobox{0}{1}{p-1}
			\hobox{1}{1}{\vdots}
			\hobox{0}{2}{\vdots}
			\hobox{1}{2}{p+1}
			\hobox{0}{3}{1} 
			\end{tikzpicture}}\dashrightarrow  
			\tiny{\begin{tikzpicture}[scale=\domscale+0.25,baseline=-45pt]
			\hobox{0}{0}{2p}
			\hobox{1}{0}{2p-2}
			\hobox{0}{1}{2p-1}
			\hobox{1}{1}{\vdots}
			\hobox{0}{2}{\vdots}
			\hobox{1}{2}{p+1}
			\hobox{0}{3}{1} 
			\end{tikzpicture}}=P(\lambda).
			\]	
		So	$p((\lambda+\rho)^-_{(\frac{1}{2})})^{ev}=(\underbrace{1,1,\dots,1}_{p-2},1,0,1).$
			\begin{align*}
			{\rm GKdim}\:L(\lambda) &=n^2-n-(n-p)^2+n-p-(1+2+\dots+p-2+p)\\&=n^2-n-(n-p)^2+n-p-\frac{(p-1)(p-2)}{2}-p\\&=2pn-\frac{3}{2}p^2-\frac{1}{2}p-1<\dim(\mathfrak{u}).	
			\end{align*}
			By Lemma \ref{reducible}, $M_{I}(\lambda)$ is reducible. 
			
			\item When $z=\frac{p}{2}-n+\frac{1}{2}$, we will have	
			$$(\lambda+\rho)^-=(\frac{p}{2}-\frac{1}{2},\dots,-\frac{p}{2}+\frac{1}{2},n-p-1,\dots,0,0,\dots,-n+p+1,\frac{p}{2}-\frac{1}{2},\dots,-\frac{p}{2}+\frac{1}{2}).$$
			\[
		\tiny{\begin{tikzpicture}[scale=\domscale+0.25,baseline=-45pt]
			\hobox{0}{0}{p}
			\hobox{0}{1}{p-1}
			\hobox{0}{2}{\vdots}
			\hobox{0}{3}{1} 
			\end{tikzpicture}}\to 
			\tiny{\begin{tikzpicture}[scale=\domscale+0.25,baseline=-45pt]
			\hobox{0}{0}{p}
			\hobox{1}{0}{p+1}
			\hobox{0}{1}{p-1}
			\hobox{0}{2}{\vdots}
			\hobox{0}{3}{1} 
			\end{tikzpicture}}\dashrightarrow  
			\tiny{\begin{tikzpicture}[scale=\domscale+0.25,baseline=-45pt]
			\hobox{0}{0}{p}
			\hobox{1}{0}{2p}
			\hobox{0}{1}{p-1}
			\hobox{1}{1}{2p-1}
			\hobox{0}{2}{\vdots}
			\hobox{1}{2}{\vdots}
			\hobox{0}{3}{1} 
			\hobox{1}{3}{p+1}
			\end{tikzpicture}}=P(\lambda).
			\]	
		So	\begin{align}\label{11}
			p((\lambda+\rho)^-_{(\frac{1}{2})})^{ev}=(\underbrace{1,1,1,\dots,1}_{p}).
			\end{align}
			\begin{align*}
			{\rm GKdim}\:L(\lambda) &=n^2-n-(n-p)^2+n-p-(1+2+\dots+p-2+p-1)\\&=n^2-n-(n-p)^2+n-p-\frac{(p-1)(p-2)}{2}-p+1\\&=2pn-\frac{3}{2}p^2-\frac{1}{2}p=\dim(\mathfrak{u}).
			\end{align*}
				By Lemma \ref{reducible}, $M_{I}(\lambda)$ is irreducible.
		\end{enumerate}
	 Hence, $z=\frac{p}{2}-n+1+\frac{1}{2}$ is the first reducible point when  $z\in\frac{1}{2}+\mathbb{Z}$.
	\end{enumerate}
	
 All in all, when $p$ is even and $3p\geq 2n$, $M_{I}(\lambda)$ is reducible if and only if $z\in(\frac{p}{2}-n+1+\mathbb{Z}_{\ge 0})\cup(\frac{p}{2}-n+1+\frac{1}{2}+\mathbb{Z}_{\ge 0})=\frac{p}{2}-n+1+\frac{1}{2}\mathbb{Z}_{\ge 0}$.	\\
	
	Next, let's look at the case when $p$ is odd.	
	\begin{enumerate}
		\item When $z\in\mathbb{Z}$ and $z>-1$,
		 $M_{I}(\lambda)$ is reducible by  (\ref{7}).
		
		\begin{enumerate}
			\item  When $z=-1$,  $M_{I}(\lambda)$ is reducible by (\ref{8}).
			
			\item When $z=\frac{p}{2}-n+\frac{3}{2}$, we will have
			$$(\lambda+\rho)^-=(\frac{p}{2}+\frac{1}{2},\dots,-\frac{p}{2}+\frac{1}{2},n-p-1,\dots,0,0,\dots,-n+p+1,\frac{p}{2}-\frac{1}{2},\dots,-\frac{p}{2}-\frac{1}{2}).$$		
			We divide the discussion into three cases:
				\begin{enumerate}
				\item  $p$ is odd and $3p>2n$. We have
				$$ p((\lambda+\rho)^-)^{ev}=(2,\underbrace{1,2,\dots,1,2}_{2n-2p-2},\underbrace{1,\dots,1}_{3p-2n-1},1,0,1).$$
				
				By (\ref{8}), $M_{I}(\lambda)$ is reducible.
		\item $p$ is odd and $2n-5<3p<2n$. We have 
		$$ p((\lambda+\rho)^-)^{ev}=(2,\underbrace{1,2,\dots,1,2}_{p-3},\underbrace{1,\dots,1}_{2n-3p+1},\underbrace{0,1,\dots,0,1}_{3p-2n+3}).$$
		\begin{align*}
		{\rm GKdim}\:L(\lambda) &=n^2-n-(1+4+\dots+p-4+(p-3)\cdot 2)-(p-2+\dots+2n-2p-2)\\
		&\quad-(2n-2p+2n-2p+2\dots+p+1)\\&=n^2-n-\frac{(p-3)\frac{p-3}{2}}{2}-4\cdot \frac{\frac{p-1}{2}\cdot\frac{p-3}{2}}{2}-\frac{(2n-p-4)(2n-3p+1)}{2}\\
		&\quad-(n-\frac{p}{2}+\frac{1}{2})(\frac{3p}{2}-n+\frac{3}{2})\\&=2pn-\frac{3}{2}p^2-2p+n-\frac{5}{2}<\dim(\mathfrak{u}).
		\end{align*}	
		By Lemma \ref{reducible}, $M_{I}(\lambda)$ is reducible.
	\item $p$ is odd and $3p\le 2n-5$. We have 
	$$ p((\lambda+\rho)^-)^{ev}=(2,\underbrace{1,2,\dots,1,2}_{p-3},1,1,1,1,\underbrace{0,1,\dots,0,1}_{2n-3p-3}).$$	
	\begin{align*}
	{\rm GKdim}\:L(\lambda) &=n^2-n-(1+4+\dots+p-4+(p-3)\cdot 2)-(p-2+p-1+p)\\&\quad-(p+1+p+3+\dots+2n-2p-2)\\&=n^2-n-\frac{(p-3)\frac{p-3}{2}}{2}-4\cdot \frac{\frac{p-1}{2}\cdot\frac{p-3}{2}}{2}-\frac{(2n-p-1)(n-\frac{3}{2}p-\frac{1}{2})}{2}\\&=2pn-\frac{3}{2}p^2-\frac{1}{2}p-1<\dim(\mathfrak{u}).	
	\end{align*}	
	
	By Lemma \ref{reducible}, $M_{I}(\lambda)$ is reducible. 
				\end{enumerate}
				
		 Combining with (i),(ii) and (iii), $z=\frac{p}{2}-n+\frac{3}{2}$	is a reducible point.
			
			\item When $z=\frac{p}{2}-n+\frac{1}{2}$, we will have	
			$$(\lambda+\rho)^-=(\frac{p}{2}-\frac{1}{2},\dots,-\frac{p}{2}+\frac{1}{2},n-p-1,\dots,0,0,\dots,-n+p+1,\frac{p}{2}-\frac{1}{2},\dots,-\frac{p}{2}+\frac{1}{2}).$$
			We divide the discussion into two cases:
				\begin{enumerate}		
			\item $p$ is odd and $3p+1\ge 2n$. We have
			$$ p((\lambda+\rho)^-)^{ev}=(2,\underbrace{1,2,\dots,1,2}_{2n-2p-2},\underbrace{1,\dots,1}_{3p-2n+1}).$$	
			\begin{align*}
			{\rm GKdim}\:L(\lambda) &=n^2-n-(1+4+\dots+(2n-2p-2)\cdot 2)-(2n-2p-1+\dots+p-2)-p+1\\&=2pn-\frac{3}{2}p^2-\frac{1}{2}p=\dim(\mathfrak{u}).	
			\end{align*}	
			By Lemma \ref{reducible}, $M_{I}(\lambda)$ is irreducible.		
				
			\item $p$ is odd and $3p+1< 2n$. We have $$ p((\lambda+\rho)^-)^{ev}=(2,\underbrace{1,2,\dots,1,2}_{p-1},\underbrace{0,1,\dots,0,1}_{2n-3p-1}).$$	
			\begin{align*}
			{\rm GKdim}\:L(\lambda) &=n^2-n-(1+4+\dots+p-2+(p-1)\cdot 2)-(p+1+p+3+\dots+2n-2p-2)\\&=n^2-n-\frac{(p-1)\frac{p-1}{2}}{2}-4\cdot \frac{\frac{p+1}{2}\cdot\frac{p-1}{2}}{2}-\frac{(2n-p-1)(n-\frac{3}{2}p-\frac{1}{2})}{2}\\&=2pn-\frac{3}{2}p^2-\frac{1}{2}p=\dim(\mathfrak{u}).	
			\end{align*}
			By Lemma \ref{reducible}, $M_{I}(\lambda)$ is irreducible.
		\end{enumerate}
				 From  (i) and (ii), we can see that $z=\frac{p}{2}-n+\frac{1}{2}$ is an irreducible point.
				\end{enumerate}
			So $z=\frac{p}{2}-n+\frac{3}{2}$ is the first reducible point when $z\in \mathbb{Z}$.
		
		\item If $z\notin \frac{1}{2}\mathbb{Z}$, 
		$M_{I}(\lambda)$ is irreducible by (\ref{9}).
		
		\item If $z\in\frac{1}{2}+\mathbb{Z}$ and $z\ge p-n$,
		$M_{I}(\lambda)$ is reducible by (\ref{10}).
		\begin{enumerate}
			\item When $z=\frac{p}{2}-n+2$, we will have
			$$(\lambda+\rho)^-_{(\frac{1}{2})}=(\frac{p}{2}+1,\dots,-\frac{p}{2}+2,\frac{p}{2}-2,\dots,-\frac{p}{2}-1).$$
			\[
		\tiny{\begin{tikzpicture}[scale=\domscale+0.25,baseline=-45pt]
			\hobox{0}{0}{p}
			\hobox{0}{1}{p-1}
			\hobox{0}{2}{\vdots}
			\hobox{0}{3}{1} 
			\end{tikzpicture}}\to 
			\tiny{\begin{tikzpicture}[scale=\domscale+0.25,baseline=-45pt]
			\hobox{0}{0}{p}
			\hobox{1}{0}{p+1}
			\hobox{0}{1}{p-1}
			\hobox{0}{2}{\vdots}
			\hobox{0}{3}{1} 
			\end{tikzpicture}}\dashrightarrow  
			\tiny{\begin{tikzpicture}[scale=\domscale+0.25,baseline=-45pt]
			\hobox{0}{0}{p}
			\hobox{1}{0}{2p-3}
			\hobox{0}{1}{p-1}
			\hobox{1}{1}{\vdots}
			\hobox{0}{2}{\vdots}
			\hobox{1}{2}{p+1}
			\hobox{0}{3}{1} 
			\end{tikzpicture}}\to  
			\tiny{\begin{tikzpicture}[scale=\domscale+0.25,baseline=-45pt]
			\hobox{0}{0}{2p}
			\hobox{1}{0}{2p-3}
			\hobox{0}{1}{2p-1}
			\hobox{1}{1}{\vdots}
			\hobox{0}{2}{\vdots}
			\hobox{1}{2}{p+1}
			\hobox{0}{3}{1} 
			\end{tikzpicture}}=P(\lambda).
			\]	
		So	$p((\lambda+\rho)^-_{(\frac{1}{2})})^{ev}=(\underbrace{1,1,1,\dots,1}_{p-3},1,0,1,0,1,0).$
			\begin{align*}
			{\rm GKdim}\:L(\lambda) &=n^2-n-(n-p)^2+n-p-(1+2+\dots+p-2+p)-2\\&=n^2-n-(n-p)^2+n-p-\frac{(p-1)(p-2)}{2}-p-2\\&=2pn-\frac{3}{2}p^2-\frac{1}{2}p-3<\dim(\mathfrak{u}).	
			\end{align*}
			By Lemma \ref{reducible}, $M_{I}(\lambda)$ is reducible.
			
			\item When $z=\frac{p}{2}-n+1$, we will have
			$$(\lambda+\rho)^-_{(\frac{1}{2})}=(\frac{p}{2},\dots,-\frac{p}{2}+1,\frac{p}{2}-1,\dots,-\frac{p}{2}).$$
			\[
		\tiny{\begin{tikzpicture}[scale=\domscale+0.25,baseline=-45pt]
			\hobox{0}{0}{p}
			\hobox{0}{1}{p-1}
			\hobox{0}{2}{\vdots}
			\hobox{0}{3}{1} 
			\end{tikzpicture}}\to 
			\tiny{\begin{tikzpicture}[scale=\domscale+0.25,baseline=-45pt]
			\hobox{0}{0}{p}
			\hobox{1}{0}{p+1}
			\hobox{0}{1}{p-1}
			\hobox{0}{2}{\vdots}
			\hobox{0}{3}{1} 
			\end{tikzpicture}}\dashrightarrow  
			\tiny{\begin{tikzpicture}[scale=\domscale+0.25,baseline=-45pt]
			\hobox{0}{0}{p}
			\hobox{1}{0}{2p-1}
			\hobox{0}{1}{p-1}
			\hobox{1}{1}{\vdots}
			\hobox{0}{2}{\vdots}
			\hobox{1}{2}{p+1}
			\hobox{0}{3}{1} 
			\end{tikzpicture}}\to  
			\tiny{\begin{tikzpicture}[scale=\domscale+0.25,baseline=-45pt]
			\hobox{0}{0}{2p}
			\hobox{1}{0}{2p-1}
			\hobox{0}{1}{p}
			\hobox{1}{1}{\vdots}
			\hobox{0}{2}{\vdots}
			\hobox{1}{2}{p+1}
			\hobox{0}{3}{1} 
			\end{tikzpicture}}=P(\lambda).
			\]	
		So $p((\lambda+\rho)^-_{(\frac{1}{2})})^{ev}=(\underbrace{1,1,1,\dots,1}_{p-1},1).$
			
			By (\ref{11}), $M_{I}(\lambda)$ is irreducible.
		\end{enumerate}
So $z=\frac{p}{2}-n+2$ is the first reducible point when $z\in\frac{1}{2}+\mathbb{Z}$.

	\end{enumerate}

	To sum up, when $p$ is odd, $M_{I}(\lambda)$ is reducible if and only if $z\in (\frac{p}{2}-n+\frac{3}{2}+\mathbb{Z}_{\ge 0})\cup(\frac{p}{2}-n+2+\mathbb{Z}_{\ge 0})=\frac{p}{2}-n+\frac{3}{2}+\frac{1}{2}\mathbb{Z}_{\ge 0}$.\\
	
	Ultimately, we discuss the case when $p$ is even and $3p < 2n$.

	\begin{enumerate}
		\item If $z\in \mathbb{Z}$ and $z>-1$,	
		 $M_{I}(\lambda)$ is reducible by (\ref{7}).
		\begin{enumerate}	
			\item When $z=-1$, we will have
			$$
			p((\lambda+\rho)^-)^{ev}=(1,1,1,\underbrace{0,1,\dots,0,1}_{2p-6}).$$
			\begin{align*}
			{\rm GKdim}\:L(\lambda) &=n^2-n-(1\cdot 1+2\cdot 1+\dots+(2n-4)\cdot 1)\\&=2n-3<\dim(\mathfrak{u}).	
			\end{align*}
			By Lemma \ref{reducible}, $M_{I}(\lambda)$ is reducible.
			
			\item When $z=\frac{p}{2}-n+2$, we will have
			$$(\lambda+\rho)^-=(\frac{p}{2}+1,\dots,-\frac{p}{2}+2,n-p-1,\dots,0,0,\dots,-n+p+1,\frac{p}{2}-2,\dots,-\frac{p}{2}-1).$$	
			
			Since $p$ is even, we have the follows.
		\begin{enumerate}	
			\item $p=2$ and $3p\ge 2n-4$. The  the corresponding Young tableau has only two columns. So it is easy to calculate that ${\rm GKdim}\:L(\lambda)<\dim(\mathfrak{u})$.	
			\item $p=2$ and $3p< 2n-4$. Similar to the above case.
			\item $p>2$ and $3p\ge 2n-4$. We have 
				\begin{align}
			\label{12}&p((\lambda+\rho)^-)^{ev}=(2,\underbrace{1,2,\dots,1,2}_{p-4},1,1,1,1,0,1)\\{\rm or~~} &p((\lambda+\rho)^-)^{ev}=\label{13}(2,\underbrace{1,2,\dots,1,2}_{p-4},1,1,1,1,1,1).
			\end{align}	
			For (\ref{12}), we get
			\begin{align*}
			{\rm GKdim}\:L(\lambda) &=n^2-n-(1\cdot 1+2\cdot 2+\dots+(p-4)\cdot 2)-(p-3+\dots+p+p+2)\\&=n^2-n-\frac{(p-4)\cdot\frac{p-4}{2}}{2}-4\cdot \frac{\frac{p-2}{2}\cdot\frac{p-4}{2}}{2}-5p+4\\&=n^2-n-\frac{3}{4}p^2-4<\dim(\mathfrak{u})\quad({\rm since}\:\:3p=2n-2 \text{~or~}  2n-4).
			\end{align*}
			By Lemma \ref{reducible}, $M_{I}(\lambda)$ is reducible.
			
				For (\ref{13}), we get	
			$${\rm GKdim}\:L(\lambda) =n^2-n-\frac{3}{4}p^2-p-3<\dim(\mathfrak{u})\quad({\rm since}\:\:3p=2n-2 \text{~or~}  2n-4).$$
			By Lemma \ref{reducible}, $M_{I}(\lambda)$ is reducible.
			\item	$p>2$ and $3p< 2n-4$. We have
			$$((\lambda+\rho)^-)^{ev}=(2,\underbrace{1,2,\dots,1,2}_{p-4},1,1,1,1,1,1,\underbrace{0,1,\dots,0,1}_{2n-3p-4}).$$	
			\begin{align*}
			{\rm GKdim}\:L(\lambda) &=n^2-n-(1\cdot 1+2\cdot 2+\dots+(p-4)\cdot 2)-(p-3+\dots+p+p+1)\\&\quad-(p+2+p+4+\dots+2n-2p-2)\\&=n^2-n-\frac{(p-4)\cdot\frac{p-4}{2}}{2}-4\cdot \frac{\frac{p-2}{2}\cdot\frac{p-4}{2}}{2}-5p+5-\frac{(2n-p)(n-\frac{3}{2}p-1)}{2}\\&=2pn-\frac{3}{2}p^2-\frac{1}{2}p-3<\dim(\mathfrak{u}).
			\end{align*}
			By Lemma \ref{reducible}, $M_{I}(\lambda)$ is reducible.
			\end{enumerate}	
		
	Hence when $z=\frac{p}{2}-n+2$, $M_I(\lambda)$ will be reducible.	
			\item When $z=\frac{p}{2}-n+1$, we will have	
			$$(\lambda+\rho)^-=(\frac{p}{2},\dots,-\frac{p}{2}+1,n-p-1,\dots,0,0,\dots,-n+p+1,\frac{p}{2}-1,\dots,-\frac{p}{2}),$$
			$$p((\lambda+\rho)^-)^{ev}=(2,\underbrace{1,2,\dots,1,2}_{p-2},1,1,\underbrace{0,1,\dots,0,1}_{2n-3p-2}).$$
			\begin{align*}
			{\rm GKdim}\:L(\lambda) &=n^2-n-(1\cdot 1+2\cdot 2+\dots+(p-4)\cdot 2)-(p-1+p)\\&\quad-(p+2+p+4+\dots+2n-2p-2)\\&=n^2-n-\frac{(p-2)\cdot\frac{p-2}{2}}{2}-4\cdot \frac{\frac{p}{2}\cdot\frac{p-2}{2}}{2}-2p+1-\frac{(2n-p)(n-\frac{3}{2}p-1)}{2}\\&=2pn-\frac{3}{2}p^2-\frac{1}{2}p=\dim(\mathfrak{u}).
			\end{align*}
			By Lemma \ref{reducible}, $M_{I}(\lambda)$ is irreducible.
		\end{enumerate}
		
		 So far we have proved that $z=\frac{p}{2}-n+2$ is the first reducible point when $z\in\mathbb{Z}$ and $3p<2n$.
		
		\item If $z\notin\frac{1}{2}\mathbb{Z}$,
 $M_{I}(\lambda)$ is irreducible by (\ref{9}).
		
		\item If  $z\in\frac{1}{2}+\mathbb{Z}$ and $z\ge p-n$,
		 $M_{I}(\lambda)$ is reducible by (\ref{10}).
		\begin{enumerate}
			\item When $z=\frac{p}{2}-n+\frac{3}{2}$, we will have
			$$p(\lambda+\rho)^-_{(\frac{1}{2})}=(\frac{p}{2}+\frac{1}{2},\dots,-\frac{p}{2}+\frac{1}{2},\frac{p}{2}-\frac{1}{2},\dots,-\frac{p}{2}-\frac{1}{2}).$$
			\[
		\tiny{\begin{tikzpicture}[scale=\domscale+0.25,baseline=-45pt]
			\hobox{0}{0}{p}
			\hobox{0}{1}{p-1}
			\hobox{0}{2}{\vdots}
			\hobox{0}{3}{1} 
			\end{tikzpicture}}\to 
			\tiny{\begin{tikzpicture}[scale=\domscale+0.25,baseline=-45pt]
			\hobox{0}{0}{p}
			\hobox{1}{0}{p+1}
			\hobox{0}{1}{p-1}
			\hobox{0}{2}{\vdots}
			\hobox{0}{3}{1} 
			\end{tikzpicture}}\dashrightarrow  
			\tiny{\begin{tikzpicture}[scale=\domscale+0.25,baseline=-45pt]
			\hobox{0}{0}{p}
			\hobox{1}{0}{2p-1}
			\hobox{0}{1}{p-1}
			\hobox{1}{1}{\vdots}
			\hobox{0}{2}{\vdots}
			\hobox{1}{2}{p+1}
			\hobox{0}{3}{1} 
			\end{tikzpicture}}\to  
			\tiny{\begin{tikzpicture}[scale=\domscale+0.25,baseline=-45pt]
			\hobox{0}{0}{2p}
			\hobox{1}{0}{2p-1}
			\hobox{0}{1}{p}
			\hobox{1}{1}{\vdots}
			\hobox{0}{2}{\vdots}
			\hobox{1}{2}{p+1}
			\hobox{0}{3}{1} 
			\end{tikzpicture}}=P(\lambda).
			\]	
		So	$p((\lambda+\rho)^-_{(\frac{1}{2})})^{ev}=(\underbrace{1,1,1,\dots,1}_{p-1},0,1).$
			\begin{align*}
			{\rm GKdim}\:L(\lambda) &=n^2-n-(n-p)^2+n-p-(1+2+\dots+p-2+p)\\&=n^2-n-(n-p)^2+n-p-\frac{(p-1)(p-2)}{2}-p\\&=2pn-\frac{3}{2}p^2-\frac{1}{2}p-1<\dim(\mathfrak{u}).	
			\end{align*}
			By Lemma \ref{reducible}, $M_{I}(\lambda)$ is reducible.
			
			\item When $z=\frac{p}{2}-n+\frac{1}{2}$, we will have
			$$p(\lambda+\rho)^-_{(\frac{1}{2})}=(\frac{p}{2}-\frac{1}{2},\dots,-\frac{p}{2}-\frac{1}{2},\frac{p}{2}+\frac{1}{2},\dots,-\frac{p}{2}+\frac{1}{2}),$$
			$$p((\lambda+\rho)^-_{(\frac{1}{2})})^{ev}=(\underbrace{1,1,1,\dots,1}_{p}).$$
				By (\ref{11}), $M_{I}(\lambda)$ is irreducible.
		\end{enumerate}
	Hence, $z=\frac{p}{2}-n+\frac{3}{2}$ is the first reducible point when  $z\in\frac{1}{2}+\mathbb{Z}$ and $3p<2n$.
	\end{enumerate}

	All in all, when $3p<2n$, $M_{I}(\lambda)$ is reducible if and only if $z\in (\frac{p}{2}-n+\frac{3}{2}+\mathbb{Z}_{\ge 0})\cup(\frac{p}{2}-n+2+\mathbb{Z}_{\ge 0})=\frac{p}{2}-n+\frac{3}{2}+\frac{1}{2}\mathbb{Z}_{\ge 0}$.
	
	Finally we  completed  the proof of our proposition.

\end{proof}

\section{Gelfand-Kirillov dimensions of scalar  generalized verma modules }

Let $\mathfrak{g}$ be a finite-dimensional complex semisimple Lie
algebra. For a maximal parabolic subalgebra $\frq=\frl\oplus\fru$, if $\fru$ is abelian, the corresponding scalar generalized Verma module $M_I(\lambda)$ will be called \emph{Hermitian type}. These modules are very important in the calssification of unitary highest weight moudles, see for example
\cite{EJ,EHW}.

We use 	$L(z\xi_p)$ to denote the irreducible quotient of the scalar generalized Verma module  $M_I(z\xi_p)$.
In this section, we will give the specific Gelfand-Kirillov dimensions of highest weight moudles $L(z\xi_p)$.

\begin{prop}
	Let $ \mathfrak{g}=\mathfrak{sl}(n,\mathbb{C}) $. Denote $r=\min\{p,n-p\}$. Then 	\[		{\rm GKdim}\:L(z\xi_p)=\left\{
	\begin{array}{ll}
	p(n-p) &\textnormal{if}\:z<1 -r \emph{~or~} z\notin \mathbb{Z}\\	     	  	   
	k(n-k) &\textnormal{if}\:z=-k\in \mathbb{Z}, 1\le k\le r-1\\	
	0 &\textnormal{if}\:z\in \mathbb{Z}_{\geq 0}.\\
	\end{array}	
	\right.
	\]	
\end{prop}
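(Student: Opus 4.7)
The plan is to combine Lemma \ref{dominant} with the explicit RSK bumping analysis already begun in Section 4. Section 4 records that $\lambda+\rho$ is $(p,n-p)$-dominant for $\lambda=z\xi_p$, with inter-block difference $\lambda_p-\lambda_{p+1}=z+1$. Hence Lemma \ref{dominant}(2) immediately handles the non-integer case and gives $\gkd L(z\xi_p)=p(n-p)$ whenever $z\notin\mathbb{Z}$, matching the first branch of the stated formula.

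For integer $z$, Lemma \ref{dominant}(1) applies and reduces the problem to computing $m(z)$, the length of the second column of the RSK tableau $P(\lambda)$, via $\gkd L(z\xi_p)=m(z)(n-m(z))$. The whole proposition then comes down to the claim
\[
m(z)=\begin{cases}0 & \text{if }z\in\mathbb{Z}_{\ge 0},\\ |z| & \text{if }-r+1\le z\le -1,\\ r & \text{if }z\le -r,\end{cases}
\]
where $r=\min\{p,n-p\}$. I would assume $p\le n-p$ (so $r=p$); the opposite case is symmetric under the exchange $p\leftrightarrow n-p$.

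For $z\ge 0$ the entries of $\lambda+\rho$ strictly decrease, so RSK produces a single column of length $n$ and $m(z)=0$. For $z=-k$ with $1\le k\le p-1$, I would extend the pattern that Section 4 establishes for $k\in\{1,2,p-1,p\}$: after inserting the first block one has shape $(1^p)$, and then inserting the second-block entries one at a time, the first $k$ of them each create a new box in column two (producing the intermediate shape $(2^j,1^{p-j})$ after the $j$-th second-block insertion for $j\le k$), while every subsequent insertion simply cascades down column one and extends it by one row. Hence the final shape is $(2^k,1^{n-2k})$, so $m(-k)=k$ and $\gkd L(-k\xi_p)=k(n-k)$. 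Finally, for $z\le -p$, the trivial bound $m(z)\le r=p$ together with Lemma \ref{GKdown} applied at $z=-p$ (where $m(-p)=p$ is already known) forces $m(z)=p$, yielding $\gkd L(z\xi_p)=p(n-p)$.

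The main obstacle is making the RSK analysis uniform for all $1\le k\le p-1$: Section 4 writes out only the extreme cases, and the intermediate ones need an inductive treatment. I would prove by induction on the insertion step $j$ the shape formula above, tracking the precise entries in rows one and two and noting that the transition from ``entry enters column two'' to ``entry cascades down column one'' happens exactly at step $j=k+1$ for the weight $\lambda=-k\xi_p$. Once this inductive step is verified the rest is elementary counting of a type already carried out in Section 4.
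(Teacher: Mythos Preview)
Your proposal is correct and follows essentially the same approach as the paper, which records the proof as a one-line ``direct corollary of Lemma~\ref{dominant}'' and leaves the determination of $m(z)$ implicit from the RSK computations already displayed in Section~4. You spell out more of the bookkeeping (the inductive shape $(2^j,1^{p-j})$, the bound $m\le r$, and the appeal to Lemma~\ref{GKdown} for $z\le -r$), but the underlying argument is identical.
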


\begin{proof}
	This is a direct corollary of Lemma \ref{dominant}.
\end{proof}
From \cite{EH}, we know that the highest weight module $L(-k\xi_p)$ is the $k$-th Wallach representation of $SU(p,q)$ for $1
\leq k \leq r-1$, which is  unitary.

\begin{prop}
	Let $ \mathfrak{g}=\mathfrak{so}(2n+1,\mathbb{C})$.	Then we have	
		\begin{enumerate}		
		\item If $p=1$, then
		
		\[		{\rm GKdim}\:L(z\xi_1)=\left\{
		\begin{array}{ll}
		2n-1 &\textnormal{if}\:z<\frac{3}{2}-n \emph{~or~} z\in \mathbb{Z}_{< 0} \emph{~or~} z\notin \frac{1}{2}\mathbb{Z}\\	     	  	   
		2n-2 &\textnormal{if}\:z=\frac{3}{2}-n+k, k\in \mathbb{Z}_{\geq 0}\\	
		0 &\textnormal{if}\:z\in \mathbb{Z}_{\geq 0}.\\
		\end{array}	
		\right.
		\]

		\item If $2\le p\le n-1$, then
		\begin{enumerate}
			\item When $3p\ge 2n+1$ and $p$ is odd, we have
			\[		{\rm GKdim}\:L(z\xi_p)=\left\{
			\begin{array}{ll}
			0 &\textnormal{if}\:\:z\in\mathbb{Z}_{\geq 0}\\
			2np-\frac{3}{2}p^2+\frac{p}{2}& \textnormal{if~} z<\frac{1}{2}-n+\frac{p}{2} \textnormal{~or~}	z\notin\frac{1}{2}\mathbb{Z}\\

			k(2n-2k+1) &\textnormal{if}\:\:z=-k, 1\le k\le n-p\\
			2np-2p^2+2kp-2k^2+n  &\textnormal{if}\:\:z=p-n-k, 1\le k\le n-p\\
			2np-2p^2+2kp-2k^2+k  &\textnormal{if}\:\:z=p-n-k, n-p< k\le \frac{p-1}{2}\\
			2np-2p^2 &\textnormal{if}\:\:z=-k-\frac{1}{2},  k<n-p \\	     	  	   
			2np-2p^2+k(2p-2k-3)+2p-1 &\textnormal{if}\:\:z=p-n-k-\frac{1}{2}, 0\le k\le \frac{p-3}{2}.
			\end{array}	
			\right.
			\]	
			
			\item When $p$ is even, we have
			\[		{\rm GKdim}\:L(z\xi_p)=\left\{
			\begin{array}{ll}
			0 &\textnormal{if}\:\:z\in\mathbb{Z}_{\geq 0}\\
			2np-\frac{3}{2}p^2+\frac{1}{2}p &\textnormal{if}\:\:z<1-n+\frac{p}{2} \textnormal{~or~}	z\notin\frac{1}{2}\mathbb{Z}\\
			k(2n-2k+1) &\textnormal{if}\:\:z=-k, p\ge k\emph{~and~} 1\le k\le n-p\\
			p(2n-2p+1) &\textnormal{if}\:\:z=-k,  p< k\le n-p\\
			2np-2p^2+2kp-2k^2+n  &\textnormal{if}\:\:z=p-n-k,
			k<\frac{p}{2}\emph{~and~} 1\le k\le n-p\\
			2np-2p^2+2kp-2k^2+k  &\textnormal{if}\:\:z=p-n-k,  n-p< k\le \frac{p}{2}-1\\
			
			
			2np-2p^2 &\textnormal{if}\:\:z=-k-\frac{1}{2}, k<n-p \\	     	  	   
			2np-2p^2+k(2p-2k-3)+2p-1 &\textnormal{if}\:\:z=p-n-k-\frac{1}{2}, 0\le k\le \frac{p-4}{2}.

			
			\end{array}	
			\right.
			\]	
			
			\item When $3p<2n+1$ and $p$ is odd, we have
			\[		{\rm GKdim}\:L(z\xi_p)=\left\{
			\begin{array}{ll}
			0 &\textnormal{if}\:\:z\in \mathbb{Z}_{\geq 0}\\
			2np-\frac{3}{2}p^2+\frac{p}{2}& \textnormal{if~} z<1-n+\frac{p}{2} \textnormal{~or~}	z\notin\frac{1}{2}\mathbb{Z}\\
			k(2n-2k+1) &\textnormal{if}\:\:z=-k, p\ge k\emph{~and~}1\le k\le n-p\\
			p(2n-2p+1) &\textnormal{if}\:\:z=-k,  p< k\le n-p\\
			2np-2p^2+2kp-2k^2+n  &\textnormal{if}\:\:z=p-n-k, 1\le k\le 2p-n\\
			2np-2p^2+(2k+1)(p-k) &\textnormal{if}\:\:z=p-n-k, 2p-n< k\leq \frac{p-3}{2}\\
			2np-2p^2 &\textnormal{if}\:\:z=-k-\frac{1}{2}, k<n-p \\	     	  	   
			2np-2p^2+k(2p-2k-3)+2p-1 &\textnormal{if}\:\:z=p-n-k-\frac{1}{2}, 0\le k\le \frac{p-3}{2}.




			\end{array}	
			\right.
			\]
	\end{enumerate}

\item If $p=n$, then
\begin{enumerate}	
\item When $n$ is even, we have
\[		{\rm GKdim}\:L(z\xi_n)=\left\{
\begin{array}{ll}
\frac{1}{2}n^2+\frac{1}{2}n &\textnormal{if}\:\:z<2-n \emph{~or~} z\notin \mathbb{Z}\\	     	  	   
k(2n-2k+1) &\textnormal{if}\:\:z=-2k\:\:\textnormal{or}\:-2k+1, 1\le k\le \frac{n-2}{2}\\	
0 &\textnormal{if}\:\:z\in \mathbb{Z}_{\geq 0}.
\end{array}	
\right.
\]

\item When $n$ is odd, we have
\[		{\rm GKdim}\:L(z\xi_n)=\left\{
\begin{array}{ll}
\frac{1}{2}n^2+\frac{1}{2}n &\textnormal{if}\:\:z<1-n \emph{~or~} z\notin \mathbb{Z}\\	     	  	   
k(2n-2k+1) &\textnormal{if}\:\:z=-2k\:\:\textnormal{or}\:-2k+1, 1\le k\le \frac{n-1}{2}\\	
0 &\textnormal{if}\:\:z\in \mathbb{Z}_{\geq 0}.
\end{array}	
\right.
\]
\end{enumerate}	
\end{enumerate}	  	
%
\end{prop}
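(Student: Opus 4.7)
The plan is to prove this proposition by direct computation, essentially extending the arguments already used in Proposition 5.1. For each of the three outer cases ($p=1$, $2\le p\le n-1$, $p=n$), first write down $\lambda+\rho$ and $(\lambda+\rho)^-$ explicitly, using $\xi=\xi_p$ and $\rho=(n-\tfrac12,n-\tfrac32,\dots,\tfrac12)$. Then split according to whether $z\in\mathbb{Z}$, $z\in\tfrac12+\mathbb{Z}$, or $z\notin\tfrac12\mathbb{Z}$. In the first two cases invoke Theorem \ref{integral} (or the integral piece of Theorem \ref{GKdim}); in the last case invoke the non-integral piece of Theorem \ref{GKdim}, using the decomposition $[\lambda]=[\lambda]_1\cup[\lambda]_2\cup[\lambda]_3$.

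The computational engine is RSK. For each $z$ listed in the statement, apply the Robinson–Schensted insertion to $(\lambda+\rho)^-$ (or to $(\lambda+\rho)_{(0)}^-$, $(\lambda+\rho)_{(\frac12)}^-$, $\widetilde{x}$ as appropriate) to obtain the shape $p((\lambda+\rho)^-)$; read off $p^{odd}$ using \eqref{eq:ev-od}; then substitute into $\mathrm{GKdim}\,L(\lambda)=n^2-\sum_{i\ge 1}(i-1)p_i^{odd}$ and simplify. Many of the required tableaux are already drawn in the proof of Proposition \ref{B}: for instance, the computations at $z=-1$, $z=\tfrac12-n+\tfrac{p}{2}$, $z=1-n+\tfrac{p}{2}$ in case 2 and at $z=2-n$, $z=1-n$ in case 3 are exactly those done there. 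What is new here is that we need the values at \emph{all} critical $z$, not just the first reducible one, and we need the values on the intermediate arithmetic progressions (e.g.\ $z=-k$ for $1\le k\le n-p$, $z=p-n-k$ for $k$ in various ranges, etc.).

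The key structural tool is Lemma \ref{GKdown}: $\mathrm{GKdim}\,L((z+1)\xi_p)\le \mathrm{GKdim}\,L(z\xi_p)$. Combined with the fact that each claimed formula is constant on an interval of integer or half-integer steps, this reduces the proof to verifying the value of $\mathrm{GKdim}\,L(z\xi_p)$ at a finite set of distinguished $z$: namely the boundary points of each interval where the formula changes. Between two consecutive boundary points, one then checks that the GK dimension is genuinely constant by computing at both endpoints and noting that \emph{strict} decrease can only occur at boundary points (since crossing a boundary is exactly what alters the shape of the inserted tableau). The monotonicity of Lemma \ref{GKdown} then pins down the value on the whole interval.

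The main obstacle is combinatorial bookkeeping, not conceptual difficulty. Depending on the parity of $p$ and the sign of $3p-(2n+1)$, the interaction between the two blocks of $(\lambda+\rho)^-$ (the $z$-shifted block $(z+n-\tfrac12,\dots,z+n-p+\tfrac12)$ and its reflection under $(-)^-$) produces different insertion patterns, and the number of ``mixed'' columns in $P(\lambda)$ depends piecewise-linearly on $z$. This gives rise to the three subcases in (2) and to the split cases in (3). The delicate part is case (2)(c) when $3p<2n+1$: as $z$ decreases past $p-n$, the tableau acquires additional rectangular blocks, and the expression for $p^{odd}$ switches form once $k$ crosses $2p-n$, which is precisely where the expression $2np-2p^2+2kp-2k^2+n$ is replaced by $2np-2p^2+(2k+1)(p-k)$. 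Verifying that the new formula matches the RSK output at the transition points is the only place where a careful parity check is unavoidable; everywhere else the calculation is a direct extension of the ones carried out in Sections 5 and 6.
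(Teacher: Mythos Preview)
Your overall strategy is correct and matches the paper's approach: compute $(\lambda+\rho)^-$, split according to whether $z\in\mathbb{Z}$, $z\in\tfrac12+\mathbb{Z}$, or $z\notin\tfrac12\mathbb{Z}$, apply RSK to extract the shape, and plug into the formula of Theorem~\ref{GKdim}. You are also right that many of the needed tableaux already appear in the proof of Proposition~\ref{B}.

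However, your proposed use of Lemma~\ref{GKdown} as the ``key structural tool'' is overstated and does not work as you describe. You write that ``each claimed formula is constant on an interval of integer or half-integer steps,'' which would let you verify only at boundary points and interpolate by monotonicity. But this is false for most of the formulas in part~(2): for instance $\mathrm{GKdim}\,L(-k\xi_p)=k(2n-2k+1)$ changes at every integer step of $k$, and likewise for the expressions $2np-2p^2+2kp-2k^2+n$, $2np-2p^2+k(2p-2k-3)+2p-1$, etc. There is no interval on which these are constant, so monotonicity gives you nothing beyond a one-sided bound. What the paper actually does (and what you must do) is compute the partition shape $p((\lambda+\rho)^-)$ for \emph{general} $k$ in each range---e.g.\ show directly that for $z=-k$ with $1\le k\le n-p$ one has $p((\lambda+\rho)^-)=(2^{2k},1^{2n-4k})$, and then evaluate $n^2-\sum_i(i-1)p_i^{odd}$ symbolically in $k$. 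This is still routine RSK, but it is a genuine computation at every $k$, not a two-endpoint check. Lemma~\ref{GKdown} is only useful in the few places where the value really is constant (e.g.\ the $2n-2$ plateau in part~(1), or the $2np-2p^2$ plateau for $z=-k-\tfrac12$ with $k<n-p$).
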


\begin{proof}
	Some details of the arguments can be found in the proof of Proposition \ref{B}.
First we consider the case $p=1$.
\begin{enumerate}
	\item If $z\in\mathbb{Z}$, we know that $z=0$ is the first reducible point by Proposition \ref{B}. And 
	$ {\rm GKdim}\:L(z\xi_1)=0$ when $z\in \mathbb{Z}_{\geq 0}$. So 	$ {\rm GKdim}\:L(z\xi_1)=\dim (\mathfrak{u})=2n-1$ when  $z\in \mathbb{Z}_{< 0}$.
	
	\item If $z\in\frac{3}{2}-n+\mathbb{Z}_{\ge 0}$, we have the follows.
	\begin{enumerate}
		\item When $z=\frac{3}{2}-n$, we will have
		${\rm GKdim}\:L(\lambda)=2n-2$ from the proof the Proposition \ref{B}.
		
		\item When $z=\frac{3}{2}-n+k\:\:(k\ge 1)$, we will have
		$$(\lambda+\rho)^-=(z+n-\frac{1}{2},n-\frac{3}{2},\dots,\frac{1}{2},-\frac{1}{2},\dots,\frac{3}{2}-n,-z-n+\frac{1}{2}),$$	
		$$(\lambda+\rho)^-_{(0)}=(z+n-\frac{1}{2},-z-n+\frac{1}{2}).$$
		Since $z+n-\frac{1}{2}>-z-n+\frac{1}{2}$, we get
		$$p((\lambda+\rho)^-_{(0)})^{odd}=(0,1)\:\:{\rm and}\:\: \sum_{i\ge 1}(i-1)p((\lambda+\rho)^-_{(\frac{1}{2})})^{odd}=(n-1)^2.$$
		So \begin{align*}
		{\rm GKdim}\:L(\lambda)&=n^2-(n-1)^2-(0\cdot 0+1\cdot 1)\\&=2n-2.
		\end{align*}
	\end{enumerate}
\end{enumerate}

Thus, we completed the proof of this case.\\

Now we consider the case  $p=n$.

We can get from the previous calculation that
$\xi=(\frac{1}{2},\frac{1}{2},\dots,\frac{1}{2})$.
\begin{enumerate}
	\item $n$ is even. From Proposition \ref{B}, we know $z=2-n$ is the first reducible point for $M_I(z\xi_n)$.
	\begin{enumerate}
		\item When $z=2-n$, we will have
		$$(\lambda+\rho)^-=(\frac{1}{2}n+\frac{1}{2},\dots,\frac{3}{2}-\frac{1}{2}n,\frac{1}{2}n-\frac{3}{2},\dots,-\frac{1}{2}-\frac{1}{2}n),$$
		$$p((\lambda+\rho)^-)^{odd}=(\underbrace{1,1,\dots,1}_{n-2},0,1,0,1).$$
		\begin{align}\label{14}
			{\rm GKdim}\:L(\lambda)\notag&=n^2-(1\cdot 1+2\cdot 1+\dots+(n-3)\cdot 1+(n-1)\cdot 1+(n+1)\cdot 1)\notag\\&=n^2-(1+2+\dots+n-3)-2n\notag\\&=n^2-\frac{(n-2)(n-3)}{2}-2n=\frac{1}{2}n^2+\frac{1}{2}n-3.
		\end{align}
		
		\item When $z=3-n$, we will have
		$$(\lambda+\rho)^-=(\frac{1}{2}n+1,\dots,2-\frac{1}{2}n,\frac{1}{2}n-2,\dots,-1-\frac{1}{2}n),$$
		\begin{align*}
			p((\lambda+\rho)^-)^{odd}&=(\underbrace{1,1,\dots,1}_{n-3},1,0,1,0,1,0)\\&=(\underbrace{1,1,\dots,1}_{n-2},0,1,0,1).
		\end{align*}
		By (\ref{14}), $	{\rm GKdim}\:L(\lambda)=\frac{1}{2}n^2+\frac{1}{2}n-3$. 
		
		\item When $z=4-n$, we will have
		$$(\lambda+\rho)^-=(\frac{1}{2}n+\frac{3}{2},\dots,\frac{5}{2}-\frac{1}{2}n,\frac{1}{2}n-\frac{5}{2},\dots,-\frac{3}{2}-\frac{1}{2}n),$$
		$$p((\lambda+\rho)^-)^{odd}=(\underbrace{1,1,\dots,1}_{n-4},\underbrace{0,1,\dots,0,1}_{8}).$$
		\begin{align}\label{15}
			{\rm GKdim}\:L(\lambda)\notag&=n^2-(1\cdot 1+2\cdot 1+\dots+(n-5)\cdot 1)-4n\notag\\&=n^2-(1+2+\dots+n-5)-4n\notag\\&=n^2-\frac{(n-4)(n-5)}{2}-4n=\frac{1}{2}n^2+\frac{1}{2}n-10.
		\end{align}
		
		\item When $z=5-n$, we will have
		\begin{align*}
			p((\lambda+\rho)^-)^{odd}&=(\underbrace{1,1,\dots,1}_{n-5},\underbrace{1,0,\dots,1,0}_{10})\\&=(\underbrace{1,1,\dots,1}_{n-4},\underbrace{0,1,\dots,0,1}_{8}).
		\end{align*}
		By (\ref{15}), $	{\rm GKdim}\:L(\lambda)=\frac{1}{2}n^2+\frac{1}{2}n-10$.
		
		\item  when $z=-2$ or $-1$
		
		${\rm GKdim}\:L(\lambda)=2n-1$.
		
		So far, we have calculated all the ${\rm GKdim}\:L(\lambda)$, it is not difficult to find the law.
	\end{enumerate}
	
	\item $n$ is odd. From Proposition \ref{B}, we know $z=1-n$ is the first reducible point for $M_I(z\xi_n)$.
	
	The process is similar to the case when $n$ is even, so we can get that 
	\[		{\rm GKdim}\:L(\lambda)=\left\{
	\begin{array}{ll}
	\frac{1}{2}n^2+\frac{1}{2}n &\textnormal{if}\:\:z<1-n \textnormal{~or~} z\notin \mathbb{Z}\\	     	  	   
	k(2n-2k+1) &\textnormal{if}\:\:z=-2k\:\:\textnormal{or}\:-2k+1, 1\le k\le \frac{n-1}{2}\\	
	0 &\textnormal{if}\:\:z\in \mathbb{Z}_{\geq 0}.\\
	\end{array}	
	\right.\\
	\]
\end{enumerate}		

Now we consider the case when $p$ is odd and $3p\ge 2n+1$.

\begin{enumerate}
	\item When $z\in\mathbb{Z}$, $z=\frac{1}{2}-n+\frac{p}{2}$ is the first reducible point of $M_I(z\xi_p)$ by Proposition \ref{B}. We will have the follows.
	\begin{enumerate}
		\item When $z\in \mathbb{Z}_{}\geq 0$, we will have	
		$	{\rm GKdim}\:L(\lambda)=0$ by Proposition \ref{B}.
		
		\item When $z=-1$, we will have
		$$\lambda+\rho=(n-\frac{3}{2},\dots,n-p-\frac{1}{2},n-p-\frac{1}{2},\frac{1}{2}).$$
%
		
		So	$p((\lambda+\rho)^-)^{odd}=(1,1,\underbrace{0,1,\dots,0,1}_{2n-4}).$
		\begin{align}\label{1}
			{\rm GKdim}\:L(\lambda)\notag&=n^2-\sum\limits_{i\ge 1}(i-1)p((\lambda+\rho)^-)_i^{odd}\notag\\&=n^2-(0\cdot 1+1\cdot 1+2\cdot 0+3\cdot 1+\dots +(2n-4)\cdot 0+(2n-3)\cdot 1)\notag\\&=n^2-n^2+2n-1=2n-1<\dim(\mathfrak{u}).
		\end{align}
		
		\item When $z=-2$, we have $p((\lambda+\rho)^-)=(2^4,1^{2n-8})$. 
		
		So 
		$p((\lambda+\rho)^-)^{odd}=(1,1,1,1,\underbrace{0,1,\dots,0,1}_{2n-8})$.
		\begin{align}
			{\rm GKdim}\:L(\lambda)\notag&=n^2-\sum\limits_{i\ge 1}(i-1)p((\lambda+\rho)^-)_i^{odd}\notag\\&=n^2-(1\cdot 1+2\cdot 1+3\cdot 1+5\cdot 1+\dots +(2n-4-1)\cdot 1)\notag\\&=n^2-2-(n-2)^2=4n-6<\dim(\mathfrak{u}).
		\end{align}
		\item When $z=-k$, $1\leq k\leq n-p$, we have	$$p((\lambda+\rho)^-)=(2^{2k},1^{2n-4k}),$$ 
		and	$	{\rm GKdim}\:L(\lambda)=2k(n-k+\frac{1}{2})$.
		
		\item When $z=-(n-p)-k$, $1\leq k\leq n-p$, we have $${p((\lambda+\rho)^-)=(3^{2k},2^{2n-2p-2k},1^{4p-2k-2n})}.$$
		So $	{\rm GKdim}\:L(\lambda)=2np-2p^2+2kp-2k^2+n$.
		
		\item When $z=-(n-p)-k$, $1+n-p\leq k\leq \frac{p-1}{2}$, we have $$p((\lambda+\rho)^-)=(3^{2n-2p},2^{2k-2n+2p},1^{2p-4k}).$$
		So $	{\rm GKdim}\:L(\lambda)=2np-2p^2+2kp-2k^2+k$.	
		
		
		\item When $z=\frac{1}{2}-n+\frac{p}{2}$, we will have
		$$(\lambda+\rho)^-=(\frac{p}{2},\dots,-\frac{p}{2}+1,n-p-\frac{1}{2},\dots,\frac{1}{2},-\frac{1}{2},\dots,-n+p+\frac{1}{2},\frac{p}{2}-1,\dots,-\frac{p}{2}).$$
		So $p((\lambda+\rho)^-)^{odd}=(\underbrace{1,2,\dots,1,2}_{2n-2p},\underbrace{1,\dots,1}_{3p-2n-1},0,1).$
		
	$
			{\rm GKdim}\:L(\lambda)=2np-\frac{3}{2}p^2+\frac{p}{2}-1.
		$

		
	\end{enumerate}	
	\item When $z\notin\frac{1}{2}\mathbb{Z}$, $M_{I}(\lambda)$ is irreducible.
	
	\item When $z\in\frac{1}{2}+\mathbb{Z}$,  $z=1-n+\frac{p}{2}$ is the first reducible point of $M_{I}(z\xi_p)$  by Proposition \ref{B}. We will have the follows.
	\begin{enumerate}
		\item When $z\ge-\frac{1}{2}$, we will have
		${\rm GKdim}\:L(\lambda)=n^2-p^2-(n-p)^2=2np-2p^2$.

		\item When $z=-\frac{3}{2}$, we will have
		\begin{align}\label{2}
			(\lambda+\rho)^-_{(\frac{1}{2})}=(n-p-\frac{1}{2},\dots,\frac{1}{2},-\frac{1}{2},\dots,-n+p+\frac{1}{2}),
		\end{align}
		\begin{align}\label{3}
			(\lambda+\rho)_{(0)}=(n-2,\dots,n-p-1,-n+p+1,\dots,-n+2).
		\end{align}
		We divide the discussion into two cases:
		\begin{enumerate}
			\item	 $p=n-1$. We will have $$p((\lambda+\rho)^-_{(0)})=(2, 1^{2p-2}),$$
			$$p((\lambda+\rho)^-_{(0)})^{odd}=(\underbrace{0,1,\dots,0,1}_{2p-2}).$$
			$
				{\rm GKdim}\:L(\lambda)=2np-2p^2+2p-1.
			$
			
			\item $p<n-1$. We will have
			$$p((\lambda+\rho)^-_{(0)})^{odd}=(\underbrace{0,1,\dots,0,1}_{2p}).$$
		$
				{\rm GKdim}\:L(\lambda)=2np-2p^2.
		$
		\end{enumerate}	
		In general, when $z=-k-\frac{1}{2}$, we have 
		$$	{\rm GKdim}\:L(\lambda)=2np-2p^2 \text{~if~} k<n-p.$$ And 	$$p((\lambda+\rho)^-_{(0)})=(2^{2p+2k+1-2n},1^{4n-2p-4k-2}) \text{~if } n-p\leq k\leq n-\frac{p+3}{2}.$$
		So $	{\rm GKdim}\:L(\lambda)=2np-2p^2+(k-n+p)(2n-2k-3)+2p-1$.
		In other words, when $z=-(n-p)-k-\frac{1}{2}$ with $0\leq k\leq\frac{p-3}{2}$, we have $$	{\rm GKdim}\:L(\lambda)=2np-2p^2+k(2p-2k-3)+2p-1.$$

	\end{enumerate}
	
\end{enumerate}

\hspace{1cm}	

Next we consider the case when $p$ is even.
\begin{enumerate}
	\item When $z\in\mathbb{Z}$, $z=1-n+\frac{p}{2}$ is the first recucible point of $M_I(z\xi_p)$ by Proposition \ref{B}. We have the follows.
	\begin{enumerate}
		\item When $z\ge 0$, we will have	
		${\rm GKdim}\:L(\lambda)=0$.
		
		\item When $z=-1$, 			
		we have ${\rm GKdim}\:L(\lambda)=2n-1$ since  (\ref{1}).
		
		\item When $z=1-n+\frac{p}{2}$, we divide the discussion into three cases:
		\begin{enumerate}	
			\item $3p\le 2n-2$. We have
			$$p((\lambda+\rho)^-)^{odd}=(\underbrace{1,2,\dots,1,2}_{p-2},1,1,1,1,\underbrace{0,1,\dots,0,1}_{2n-3p-2}).$$
		$
				{\rm GKdim}\:L(\lambda)=2np-\frac{3}{2}p^2+\frac{p}{2}-1.
		$
			\item $n=p+1$ and $p>2$. We have 
			$$p((\lambda+\rho)^-)^{odd}=(1,2,\underbrace{1,1,\dots,1}_{2n-p-6},\underbrace{0,1,\dots,0,1}_{2p-2n+6}).$$
		$
				{\rm GKdim}\:L(\lambda)=2np-\frac{3}{2}p^2+\frac{1}{2}p-3.
		$
			
			\item $3p\ge 2n$ and $n\neq p+1$. We have
			$$p((\lambda+\rho)^-)^{odd}=(\underbrace{1,2,\dots,1,2}_{p-2},\underbrace{1,1,\dots,1}_{2n-3p+2},\underbrace{0,1,\dots,0,1}_{3p-2n+2}).$$
		$
				{\rm GKdim}\:L(\lambda)=2np-\frac{3}{2}p^2-\frac{3p}{2}+n-\frac{13}{4}.
		$
		\end{enumerate}

		
		\item When $z=-n+\frac{p}{2}$, we will have
		$$(\lambda+\rho)^-=(\frac{p}{2}-\frac{1}{2},\dots,-\frac{p}{2}+\frac{1}{2},n-p-\frac{1}{2},\dots,\frac{1}{2},-\frac{1}{2},\dots,-n+p+\frac{1}{2},\frac{p}{2}-\frac{1}{2},\dots,-\frac{p}{2}+\frac{1}{2}).$$	
		
		\begin{align*}
			{\rm GKdim}\:L(\lambda)=2np-\frac{3}{2}p^2+\frac{p}{2}=\dim(\mathfrak{u}).
		\end{align*}

		%
		%
		
		\item When $p\ge k$, 
		$z=-k$, $1\leq k\leq n-p$, we have	$$p((\lambda+\rho)^-)=(2^{2k},1^{2n-4k}),$$ 
		and	$	{\rm GKdim}\:L(\lambda)=2k(n-k+\frac{1}{2})$.
		
		\item	When $z=-(n-p)-k$, $1\leq k\leq n-p$, we have $$p((\lambda+\rho)^-)=(3^{2k},2^{2n-2p-2k},1^{4p-2k-2n}),$$
		and $	{\rm GKdim}\:L(\lambda)=2np-2p^2+2kp-2k^2+n$.
		
		\item	When $z=-(n-p)-k$, $1+n-p\leq k\leq \frac{p}{2}-1$, we have $$p((\lambda+\rho)^-)=(3^{2n-2p},2^{2k-2n+2p},1^{2p-4k}),$$
		and $	{\rm GKdim}\:L(\lambda)=2np-2p^2+2kp-2k^2+k$.	
		
		\item When $p<k$,	 $z=-k$, $p< k\leq n-p$, we have	$$p((\lambda+\rho)^-)=(2^{2p},1^{2n-4p}),$$ 
		and	$	{\rm GKdim}\:L(\lambda)=p(2n-2p+1)$.
		
		%
		%
		\item When $z<1-n+\frac{p}{2}$, we have  $$	{\rm GKdim}\:L(\lambda)=2np-\frac{3}{2}p^2+\frac{1}{2}p.$$
	\end{enumerate}		
	\item When  $z\notin\frac{1}{2}\mathbb{Z}$,
	$M_{I}(\lambda)$ is irreducible, so we have$$	{\rm GKdim}\:L(\lambda)=2np-\frac{3}{2}p^2+\frac{1}{2}p.$$
	
	\item When  $z\in\frac{1}{2}+\mathbb{Z}$, $z=\frac{3}{2}-n+\frac{p}{2}$ is the first recucible point of $M_I(z\xi_p)$ by Proposition \ref{B}. We have the follows.
	\begin{enumerate}
		\item When $z\ge -\frac{1}{2}$, we will have
		\begin{align}\label{17}
			{\rm GKdim}\:L(\lambda)\notag&=n^2-(1+3+\dots+2p-1)-(n-p)^2\notag\\&=n^2-p^2-n^2+2np-p^2\notag\\&=2np-2p^2.
		\end{align}		
		
		%
		%
		%

		\item	In general, when $z=-k-\frac{1}{2}$, we have 
		$$	{\rm GKdim}\:L(\lambda)=2np-2p^2 \text{~if~} k<n-p.$$   
		\item
		When $z=-(n-p)-k-\frac{1}{2}$, $0\le k\le \frac{p-4}{2}$, we have	$$p((\lambda+\rho)^-_{(0)})=(2^{2p+2k+1-2n},1^{4n-2p-4k-2}).$$
		So $	{\rm GKdim}\:L(\lambda)=2np-2p^2+k(2p-2k-3)+2p-1$.\\
	\end{enumerate}
\end{enumerate}

Finally we consider the case when $p$ is odd and $3p<2n+1$.
\begin{enumerate}
	\item When $z\in\mathbb{Z}$, $z=\frac{3}{2}-n+\frac{p}{2}$ is the first recucible point of $M_I(z\xi_p)$ by Proposition \ref{B}. We have the follows.
	\begin{enumerate}	
		\item When $z\in\mathbb{Z}_{\ge 0}$,	we will have
		
		${\rm GKdim}\:L(\lambda)=0$.	
		
		\item When $z=-1$, we will have
		
		${\rm GKdim}\:L(\lambda)=2n-1$.
		
		\item When $z=\frac{3}{2}-n+\frac{p}{2}$, we will have
		$$\lambda+\rho=(\frac{p}{2}+1,\dots,-\frac{p}{2}+2,n-p-\frac{1}{2},\dots,\frac{1}{2}).$$
		
		
		Since $p$ is odd, we have the follows.
		\begin{enumerate}
			\item When $3p=2n-1$, we will have
			$$p((\lambda+\rho)^-_{(0)})^{odd}=(\underbrace{1,2,\dots,1,2}_{p-3},1,1,1,1,0,1).$$	
			$
				{\rm GKdim}\:L(\lambda)=2np-\frac{3}{2}p^2+\frac{p}{2}-4.
		$
			
			\item When $3p<2n-1$, we will have
			$$p((\lambda+\rho)^-_{(0)})^{odd}=(\underbrace{1,2,\dots,1,2}_{p-3},1,1,1,1,1,1,\underbrace{0,1,\dots,0,1}_{2n-3p-3}).$$	
		$
				{\rm GKdim}\:L(\lambda)=2np-\frac{3}{2}p^2+\frac{p}{2}-3.
		$
		\end{enumerate}

		%
		\item In general, when $p\ge k$, 
		$z=-k$, $1\le k\le n-p$, we have 
		$$p(\lambda+\rho)^-=(2^{2k},1^{2n-2k}),$$
		and ${\rm GKdim}\:L(\lambda)=k(2n-2p+1).$
		
		\item When $z=-k$, $p< k\le n-p$, we have 
		$$p(\lambda+\rho)^-=(2^{2p},1^{2n-2p}),$$
		and ${\rm GKdim}\:L(\lambda)=p(2n-2p+1).$
		
		\item When $p\ge n-p+k$,
		$z=-(n-p)-k$, $1\le k\le 2p-n$, we have 
		$$p(\lambda+\rho)^-=(3^{2k},2^{2n-2p-2k},1^{4p-2n-2k}),$$
		and ${\rm GKdim}\:L(\lambda)=2np-2p^2-2k^2+2kp+n.$
		
		\item When $p<n-p+k$,
		$z=-(n-p)-k$, $2p-n<k\leq \frac{p-3}{2}$, we have 
		$$p(\lambda+\rho)^-=(3^{2k},2^{2p-4k},1^{-4p+2n+2k}),$$
		and ${\rm GKdim}\:L(\lambda)=2np-2p^2-2k^2+2kp+p-k.$
		
		\item When $z=-(n-p)-k$, $p-3< 2k$, equivalently $z<\frac{p+3}{2}-n$, we have 
		$$p(\lambda+\rho)^-=(3^p,1^{2n-3p}),$$
		and ${\rm GKdim}\:L(\lambda)=2np-\frac{3}{2}p^2+\frac{1}{2}p.$
		
	\end{enumerate}	
	\item When $z\in\frac{1}{2}+\mathbb{Z}$,  $z=1-n+\frac{p}{2}$ is the first recucible point of $M_I(z\xi_p)$ by Proposition \ref{B}. We have the follows. 
	
	\begin{enumerate}
		\item When $z\ge -\frac{1}{2}$, by (\ref{17}) we have
		${\rm GKdim}\:L(\lambda)=2np-2p^2$.

		\item When $z=1-n+\frac{p}{2}$, we will have
		$$(\lambda+\rho)^-_{(0)}=(\frac{p}{2}+\frac{1}{2},\dots,-\frac{p}{2}+\frac{3}{2},\frac{p}{2}-\frac{3}{2},\dots,-\frac{p}{2}-\frac{1}{2}),$$
		$$p((\lambda+\rho)^-_{(0)})^{odd}=(\underbrace{1,1,\dots,1}_{p-2},1,0,1,0).$$	
	$
			{\rm GKdim}\:L(\lambda)=2np-\frac{3}{2}p^2+\frac{p}{2}-1=\dim(\mathfrak{u})-1.
	$
		\item	In general, when $z=-k-\frac{1}{2}$, we have 
		$$	{\rm GKdim}\:L(\lambda)=2np-2p^2 \text{~if~} k<n-p.$$   
		\item
		When $z=-(n-p)-k-\frac{1}{2}$, $0\le k\le \frac{p-3}{2}$, we have	$$p((\lambda+\rho)^-_{(0)})=(2^{2p+2k+1-2n},1^{4n-2p-4k-2}).$$
		So $	{\rm GKdim}\:L(\lambda)=2np-2p^2+k(2p-2k-3)+2p-1$.

		%
		%

	\end{enumerate}	
	
\end{enumerate}	

\end{proof}

From \cite{EH}, we know that the highest weight module $L(-(n-\frac{3}{2})\xi_1)$ is the unique Wallach representation of $SO(2,2n-1)$, which is  unitary.

For type  $B_n$, we have given the detailed arguments for  the calculations in the case when $2\le p\le n-1$. Now we omit some similar arguments for types $C_n$ and $D_n$.

\begin{prop}
	Let $ \mathfrak{g}=\mathfrak{so}(2n,\mathbb{C}) $ and $L(z\xi_p)$ be  a scalar highest weight module.	Then we have the follows.
\begin{enumerate}
	\item If $p=1$, then
	
	\[	{\rm GKdim}\:L(z\xi_1)=\left\{
	\begin{array}{ll}
	2n-2 &\textnormal{if}\:\:z<2-n \emph{~or~}z\notin \mathbb{Z}\\	 
	
	2n-3 &\textnormal{if}\:\:2-n\le z\le -1\\
	
	0 &\textnormal{if}\:\:z\in \mathbb{Z}_{\geq 0}.\\
	\end{array}	
	\right.
	\]
	
	\item If $2\le p\le n-2$, then
	\begin{enumerate}
		\item When $3p\ge 2n$ and $p$ is even, we have
		\[	{\rm GKdim}\:L(z\xi_p)=\left\{
		\begin{array}{ll}
		0 &\textnormal{if}\:\:z\in \mathbb{Z}_{\geq 0}\\
		2np-\frac{3}{2}p^2-\frac{1}{2}p &\textnormal{if}\:\:z<1+\frac{p}{2}-n \emph{~or~} z\notin \frac{1}{2}\mathbb{Z}\\
		2nk-2k^2-k &\textnormal{if}\:\:z=-k, 1\leq k< n-p\\

		
		2np-2p^2+k(2p-2k-2)+n-p-1 &\textnormal{if}\:\:z=p-n-k, 0\le k<n-p\\

		2np-2p^2+k(2p-2k-1) &\textnormal{if}\:\:z=p-n-k, n-p\leq k<\frac{p}{2}\\

		2np-2p^2 &\textnormal{if}\:\:z=-k+\frac{1}{2}, k\leq  n-p\\	 
		
		
		2np-2p^2+k(2p-2k-5)+2p-3 &\textnormal{if}\:\:z=p-n-k-\frac{1}{2}, 0\le k\le \frac{p-4}{2}.

		\end{array}	
		\right.
		\]
		
		\item When $p$ is odd, we have
		\[\small{	{\rm GKdim}\:L(z\xi_p)=\left\{
			\begin{array}{ll}
			0 &\textnormal{if}\:\:z\in \mathbb{Z}_{\geq 0}\\
			2np-\frac{3}{2}p^2-\frac{1}{2}p &\textnormal{if}\:\:z<\frac{3+p}{2}-n \emph{~or~} z\notin \frac{1}{2}\mathbb{Z}\\
			2nk-2k^2-k &\textnormal{if}\:\:z=-k, k<n-p\emph{~and~}1\le k\le p\\
			2np-2p^2-p &\textnormal{if}\:\:z=-k, p<k<n-p\\
			2np-2p^2+k(2p-2k-3)+p-1 &\textnormal{if}\:\:z=p-n-k, \max\{0,2p-n\}<k\le\frac{p-3}{2}\\
			2np-2p^2+k(2p-2k-2)+n-p-1 &\textnormal{if}\:\:z=p-n-k, k\leq 2p-n\emph{~and~}0\le k<n-p\\
			2np-2p^2+k(2p-2k-1) &\textnormal{if}\:\:z=p-n-k,  n-p\leq k\leq \frac{p-3}{2}\\
			2np-2p^2 &\textnormal{if}\:\:z=-k+\frac{1}{2}, k\leq  n-p\\	 			
			2np-2p^2+k(2p-2k-5)+2p-3 &\textnormal{if}\:\:z=p-n-k-\frac{1}{2}, 0\le k\le \frac{p-5}{2}.\\
				\end{array}	
			\right.}
		\]

		\item When $3p<2n$ and $p$ is even, we have
		\[	{\rm GKdim}\:L(z\xi_p)=\left\{
		\begin{array}{ll}
		0 &\textnormal{if}\:\:z\in \mathbb{Z}_{\geq 0}\\
		2np-\frac{3}{2}p^2-\frac{1}{2}p &\textnormal{if}\:\:z<\frac{3+p}{2}-n \emph{~or~} z\notin \frac{1}{2}\mathbb{Z}\\

		
		2nk-2k^2-k &\textnormal{if}\:\:z=-k, 1\leq k< n-p\emph{~and~}k<p\\
		2np-2p^2-p &\textnormal{if}\:\:z=-k, p\le k< n-p\\		
		2np-2p^2+k(2p-2k-2)+n-p-1 &\textnormal{if}\:\:z=p-n-k,   0\leq k \leq 2p-n\\	
		2np-2p^2+k(2p-2k-3)+p-1 &\textnormal{if}\:\:z=p-n-k, 2p-n<0\le k\le\frac{p-3}{2}\\
		
		2np-2p^2 &\textnormal{if}\:\:z=-k+\frac{1}{2}, k\leq  n-p\\
		2np-2p^2+k(2p-2k-5)+2p-3 &\textnormal{if}\:\:z=p-n-k-\frac{1}{2}, 0\le k\le \frac{p-4}{2}.\\
		
		\end{array}	
		\right.
		\]


	\end{enumerate}

	\item If $p=n-1\:\:or\:\:p=n$, then
	\begin{enumerate}
		\item When $n$ is even, we have				
		\[		{\rm GKdim}\:L(z\xi_p)=\left\{
		\begin{array}{ll}
		\frac{1}{2}n^2-\frac{1}{2}n &\textnormal{if}\:\:z<2-n \emph{~or~}z\notin \mathbb{Z}\\	     	  	   
		k(2n-2k-1) &\textnormal{if}\:\:z=-2k\emph{~or~}-2k+1, 1\le k\le \frac{n-2}{2}\\
		
		0 &\textnormal{if}\:\:z\in \mathbb{Z}_{\geq 0}.\\
		\end{array}	
		\right.
		\]	
		
		\item When $n$ is odd, we have
		\[		{\rm GKdim}\:L(z\xi_p)=\left\{
		\begin{array}{ll}
		\frac{1}{2}n^2-\frac{1}{2}n &\textnormal{if}\:\:z<3-n \emph{~or~}z\notin \mathbb{Z}\\	     	  	   
		k(2n-2k-1) &\textnormal{if}\:\:z=-2k\emph{~or~}-2k+1, 1\le k\le \frac{n-3}{2}\\
		
		0 &\textnormal{if}\:\:z\in \mathbb{Z}_{\geq 0}.\\
		\end{array}	
		\right.
		\]	
	\end{enumerate}		
\end{enumerate}		     
\end{prop}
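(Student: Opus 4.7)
The plan is to extend the calculations from the proof of Proposition \ref{D} so as to obtain the precise value of $\gkd L(z\xi_p)$ for every $z \in \mathbb{R}$, not just to detect when it drops below $\dim(\mathfrak{u})$. Throughout I would apply Theorem \ref{GKdim}(4), namely
\[
\gkd L(\lambda) = n^2 - n - F_D((\lambda+\rho)_{(0)}^-) - F_D((\lambda+\rho)_{(\frac{1}{2})}^-) - \sum_{x \in [\lambda]_3} F_A(\tilde{x}),
\]
together with Lemma \ref{scalar} and the explicit forms of $\xi_p$ and $(\lambda+\rho)^-$ already recorded in the proof of Proposition \ref{D}.

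I would organize the proof by the value of $p$, following the same split as Proposition \ref{D}: $p=1$; $2\le p\le n-2$ (further divided by the parity of $p$ and by whether $3p\ge 2n$); and $p\in\{n-1,n\}$. For each fixed $p$, I further split according to whether $z\in\mathbb{Z}$, $z\in\tfrac12+\mathbb{Z}$, or $z\notin\tfrac12\mathbb{Z}$, corresponding to the decomposition of $(\lambda+\rho)^-$ into its integer, half-integer, and generic pieces. In the third case all the $F_D$ contributions come from the generic staircase $(n-p-1,\dots,0,0,\dots,1-n+p)$ and the sum over $[\lambda]_3$ contributes the full $A$-type staircase from $(z+n-1,\dots,z+n-p)$, immediately yielding the generic value $\gkd L(\lambda)=2np-\tfrac32 p^2-\tfrac12 p$ (respectively $2n-2$ for $p=1$ and $\tfrac12 n^2-\tfrac12 n$ for $p\in\{n-1,n\}$).

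The main computational step is the integer and half-integer cases, where one must determine the shape $p((\lambda+\rho)^-)^{ev}$ through the Robinson--Schensted insertion. This is a controlled iteration of the tableau manipulations already performed at the first reducible point in Proposition \ref{D}: starting from the staircase coming from the decreasing piece, one inserts the $p$ translated entries $z+n-1,\dots,z+n-p$ and tracks the bumping. For $z=-k$ with $1\le k\le n-p$ the insertion attaches $2k$ rows of length $2$, giving shape $(2^{2k},1^{2n-4k})$ and hence $\gkd L(\lambda)=2nk-2k^2-k$; once $k$ exceeds $n-p$ a rectangle of $3$'s appears and the shape becomes $(3^{\cdot},2^{\cdot},1^{\cdot})$; analogous but parallel patterns govern the half-integer case via $(\lambda+\rho)^-_{(\frac12)}$. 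The boundary cases $p=n-1,n$ collapse because $\xi_p=(\tfrac12,\dots,\tfrac12)$ forces $\lambda+\rho$ into a single integrality class determined by the parity of $n$, reducing to the same staircase insertion seen for type $B$.

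The principal obstacle is bookkeeping: in part (2) the shape changes every time $k=-z$ (or $-z-\tfrac12$) crosses one of the thresholds $n-p$, $p$, $2p-n$, and $\lfloor(p-3)/2\rfloor$, and each parity of $p$ together with the sign of $3p-2n$ switches which thresholds are active and in what order. I would address this by first establishing, for each of the three subdivisions of part (2), a uniform description of the tableau shape as a piecewise function of the integer parameter $k$, then applying $F_D$ term by term to read off $\gkd L$. The half-integer intervals are handled by the same insertion argument applied to the half-integer staircase $(\lambda+\rho)^-_{(\frac12)}$, which is shorter. Consistency with Proposition \ref{D} at every first reducible point $z$ (where $\gkd L$ must drop from $\dim\mathfrak{u}$) gives an effective sanity check at each boundary of the case analysis.
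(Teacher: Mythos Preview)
Your proposal is correct and follows essentially the same approach as the paper: case split on $p$ and then on the integrality class of $z$, compute the Robinson--Schensted shape of $(\lambda+\rho)^-$ (or of its integer/half-integer pieces) for a general parameter $k$, and read off $\gkd L$ from the $F_D$-formula, using Proposition \ref{D} to anchor the irreducible range. One small correction to watch for: in type $D_n$ the tail $(n-p-1,\dots,0,0,\dots,1-n+p)$ contains a repeated $0$, so for $2\le p\le n-2$ and $z=-k$ the shape is $(2^{2k+1},1^{2n-4k-2})$ rather than $(2^{2k},1^{2n-4k})$, and in the deeper ranges a leading row of length $4$ appears (e.g.\ $p((\lambda+\rho)^-)=(4,3^{\cdot},2^{\cdot},1^{\cdot})$); this shifts the intermediate bookkeeping but not the method.
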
	

\begin{proof}
The details of the arguments for the case $p=1$ can be found in the proof of Proposition \ref{C}. 

Now we consider the case $p=n$. From Proposition \ref{C}, $z=\frac{1}{2}-\frac{n}{2}$ is the first reducible point of $M_I(z\xi_n)$.
\begin{enumerate}	
	\item When $z=\frac{1}{2}-\frac{n}{2}$, we will have
	${\rm GKdim}\:L(\lambda)=\frac{1}{2}n^2+\frac{1}{2}n-1$.
	
	\item When $z=1-\frac{n}{2}$, we will have
	$$p((\lambda+\rho)^-)^{odd}=(\underbrace{1,\dots,1}_{n-3},1,0,1,0,1,0)=(\underbrace{1,\dots,1}_{n-2},0,1,0,1).$$	
	By (\ref{33}), 
	${\rm GKdim}\:L(\lambda)=\frac{1}{2}n^2+\frac{1}{2}n-3$.
	
	\item When $z=\frac{3}{2}-\frac{n}{2}$, we will have
	$$(\lambda+\rho)^-=(\frac{3}{2}+\frac{n}{2},\dots,-\frac{n}{2}+\frac{5}{2},\frac{n}{2}-\frac{5}{2},\dots,-\frac{n}{2}-\frac{3}{2}),$$
	$$p((\lambda+\rho)^-)^{ev}=(\underbrace{1,\dots,1}_{n-4},1,0,1,0,1,0,1,0)=(\underbrace{1,\dots,1}_{n-3},0,1,0,1,0,1).$$
	$
	{\rm GKdim}\:L(\lambda)=\frac{1}{2}n^2+\frac{1}{2}n-6.
$
	
	\item When $z=2-\frac{n}{2}$, we will have
	$$p((\lambda+\rho)^-)^{ev}=(\underbrace{1,\dots,1}_{n-5},\underbrace{1,0,\dots,1,0}_{10})=(\underbrace{1,\dots,1}_{n-4},0,1,0,1,0,1,0,1).$$
$
	{\rm GKdim}\:L(\lambda)=\frac{1}{2}n^2+\frac{1}{2}n-10.
$	
	\item When $z=-2$, we will have
	$$p((\lambda+\rho)^-)^{odd}=(1,1,1,\underbrace{1,0,\dots,1,0}_{2n-6}).$$
	$
	{\rm GKdim}\:L(\lambda)=4n-6.
$
\end{enumerate}	

In general, we  find that  ${\rm GKdim}\:L(\lambda)=	kn-\frac{k(k-1)}{2}$ if $z=-\frac{k}{2}$ for $ 1\le k\le n-1$.

When $z=-\frac{1}{2}+k$, $z=k$ $(k\ge 0)$ and $z\notin \frac{1}{2}\mathbb{Z}$, we have obtained ${\rm GKdim}\:L(\lambda)$ in Proposition \ref{C}.	\\

For the case $2\leq p\leq n-1$, we only consider the case when $3p\leq 2n$ and $p$ is even.
\begin{enumerate}
	\item When $z\in\mathbb{Z}$,  $z=\frac{p}{2}-n$ is the first reducible point of $M_I(z\xi_p)$ by Proposition \ref{C}. We have the follows.

	\begin{enumerate}
		\item When $z > -1$,  we have
		$${\rm GKdim}\:L(\lambda)=0.$$
		
		\item When $z=-1$, we have	
		$${\rm GKdim}\:L(\lambda)=2n-1.$$
		
		\item When $z=\frac{p}{2}-n+1$, we will have
		$$(\lambda+\rho)^-=(\frac{p}{2}+1,\dots,-\frac{p}{2}+2,n-p,\dots,1,-1,\dots,-n+p,\frac{p}{2}-2,\dots,-\frac{p}{2}-1),$$
		and				
		$$	{\rm GKdim}\:L(\lambda)=2np-\frac{3}{2}p^2+\frac{1}{2}p-1.$$	
		
		\item When $z=\frac{p}{2}-n$, we will have
		$$(\lambda+\rho)^-=(\frac{p}{2},\dots,-\frac{p}{2}+1,n-p,\dots,1,-1,\dots,-n+p,\frac{p}{2}-1,\dots,-\frac{p}{2}),$$
		and
		$$	{\rm GKdim}\:L(\lambda) =2np-\frac{3}{2}p^2+\frac{1}{2}p.$$		
	\end{enumerate}
	
	In general, we can have the follows.	
	\begin{enumerate}
		\item When $z=-k$, $n-p\ge k-1$ and $p\ge k>0$, we have
		$$p(\lambda+\rho)^-=(2^{2k-1},1^{2n-4k+2}),$$
		and ${\rm GKdim}\:L(\lambda)=k(2n-2k+1).$	
		
		\item When $z=-k$, $n-p\ge k-1$ and $p<k$, we have
		$$p(\lambda+\rho)^-=(2^{2p},1^{2n-4p}),$$
		and ${\rm GKdim}\:L(\lambda)=p(2n-2p+1).$
		
		\item When $z=-2k$, $n-p\le 2(k-1)$ and $p\ge 2k$, we have
		$$p(\lambda+\rho)^-=(3^{4k-2n+2p-2},2^{4n-4p-4k+3},1^{2p-4k}),$$
		and ${\rm GKdim}\:L(\lambda)=2n(4k+2p-n)-2(2k+p^2)+4k(p+1)-3(n-p)-1.$
		
		If we write $z=-(n-p)-k \in 2\mathbb{Z}$, $2\leq k\leq 2p-n	$, we will have $${\rm GKdim}\:L(\lambda)= 2np-2p^2+k(2p-2k-2)-5(n-p)-1.$$
		
		\item When $z=-2k-1$, $n-p+1\le 2k$ and $p\ge 2k+1$, we have
		$$p(\lambda+\rho)^-=(3^{4k-2n+2p},2^{4n-4p-4k+1},1^{2p-4k-2}),$$
		and ${\rm GKdim}\:L(\lambda)=(2k+p)(4n-4k-2p)+(p-1)(4k+1)-n(2n-1).$
		
		If we write $z=-(n-p)-k \in 1+2\mathbb{Z}$, $2\leq k\leq 2p-n	$, we will have $${\rm GKdim}\:L(\lambda)= 2np-2p^2+k(2p-2k+2)-(n-p)-1.$$
		
		\item When $z=-(n-p)-k$, $2p-n<k\leq \frac{1}{2}p$, we have
		$$p(\lambda+\rho)^-=(3^{2k-2},2^{2p-4k+4},1^{2n-4p+2k-2}),$$
		and ${\rm GKdim}\:L(\lambda)=2np-2p^2+(2k-1)(p-k)+2k-1=2np-2p^2+k(2p-2k+3)-p-1.$

		\item When $z=-(n-p)-k$, $k\ge \frac{1}{2}p+1$, we have
		$$p(\lambda+\rho)^-=(3^p,1^{2n-3p}),$$
		and ${\rm GKdim}\:L(\lambda)=2np-\frac{3}{2}p^2+\frac{1}{2}p.$

	\end{enumerate}

	\item When $z\notin\frac{1}{2}\mathbb{Z}$, we have	
	$$(\lambda+\rho)^-_{(0)}=(n-p-1,\dots,0,0,\dots,-n+p+1)\text{~and~} (\lambda+\rho)_{(z)}=(z+n-1,\dots,z+n-p)\in[\lambda]_3.$$	
	\begin{align*}
	{\rm GKdim}\:L(\lambda) &=n^2-n-(1+\dots+p-1)-(2+\dots+2n-2p-2)\notag\\&=n^2-n-\frac{p(p-1)}2-2\cdot\frac{(n-p)(n-p-1)}{2}\notag\\&=2np-\frac{3}{2}p^2+\frac{1}{2}p=\dim(\mathfrak{u}).	
	\end{align*}
	
	\item When  $z\in\frac{1}{2}+\mathbb{Z}$, $z=\frac{p}{2}-n+\frac{1}{2}$ is the first reducible point of $M_I(z\xi_p)$ by Proposition \ref{C}. We have the follows. 
	
	\begin{enumerate}
		\item	
		When $z\ge -\frac{3}{2}$, we have	
		$${\rm GKdim}\:L(\lambda)=p(2n-2p+1).$$

		\item When $z=\frac{p}{2}-n+\frac{1}{2}$, we will have
		$$(\lambda+\rho)^-=(\frac{p}{2}+\frac{1}{2},\dots,-\frac{p}{2}+\frac{3}{2},n-p,\dots,1,-1,\dots,-n+p,\frac{p}{2}-\frac{3}{2},\dots,-\frac{p}{2}-\frac{1}{2}).$$
		So
		$$	{\rm GKdim}\:L(\lambda) =2np-\frac{3}{2}p^2+\frac{1}{2}p-1.$$

		%
	\end{enumerate}		
	
	In general, we have the follows.
	\begin{enumerate}
		\item When $z=-k-\frac{1}{2}$, $k\leq n-p$, we have
		$${\rm GKdim}\:L(\lambda) =p(2n-2p+1).$$
		\item When $z=-k-\frac{1}{2}$, $k\ge n-p+1$ and $n\le\frac{1}{2}p+k $, we have
		$${\rm GKdim}\:L(\lambda) =2np-\frac{3}{2}p^2+\frac{1}{2}p.$$

		\item When $z=-(n-p)-k-\frac{1}{2}$, $1\leq k \le \frac{p-2}{2}$, we have
		$$p(\lambda+\rho)^-_{(\frac{1}{2})}=(2^{2k-2n+2p},1^{4n-4k-2p}),$$
		and ${\rm GKdim}\:L(\lambda) =2np-2p^2+(p-k)(2k+1)=2np-2p^2+k(2p-2k-1)+p.$

	\end{enumerate}	
\end{enumerate}

\end{proof}

From \cite{EH}, we know that the highest weight module $L(-\frac{k}{2}\xi_n)$ is the $k$-th Wallach representation of $Sp(n,\mathbb{R})$ for $1
\leq k \leq n-1$, which is  unitary.

\begin{prop}
		Let $ \mathfrak{g}=\mathfrak{so}(2n,\mathbb{C}) $ and $L(z\xi_p)$ be  a scalar highest weight module.	Then we have the follows.
	\begin{enumerate}
		\item If $p=1$, then
		
		\[	{\rm GKdim}\:L(z\xi_1)=\left\{
		\begin{array}{ll}
		2n-2 &\textnormal{if}\:\:z<2-n \emph{~or~}z\notin \mathbb{Z}\\	 
		
		2n-3 &\textnormal{if}\:\:2-n\le z\le -1\\
		
		0 &\textnormal{if}\:\:z\in \mathbb{Z}_{\geq 0}.\\
		\end{array}	
		\right.
		\]
		
		\item If $2\le p\le n-2$, then
		\begin{enumerate}
			\item When $3p\ge 2n$ and $p$ is even, we have
			\[	{\rm GKdim}\:L(z\xi_p)=\left\{
			\begin{array}{ll}
			0 &\textnormal{if}\:\:z\in \mathbb{Z}_{\geq 0}\\
			2np-\frac{3}{2}p^2-\frac{1}{2}p &\textnormal{if}\:\:z<1+\frac{p}{2}-n \emph{~or~} z\notin \frac{1}{2}\mathbb{Z}\\
			2nk-2k^2-k &\textnormal{if}\:\:z=-k, 1\leq k< n-p\\

			
			2np-2p^2+k(2p-2k-2)+n-p-1 &\textnormal{if}\:\:z=p-n-k, 0\le k<n-p\\

			2np-2p^2+k(2p-2k-1) &\textnormal{if}\:\:z=p-n-k, n-p\leq k<\frac{p}{2}\\

			2np-2p^2 &\textnormal{if}\:\:z=-k+\frac{1}{2}, k\leq  n-p\\	 
			
			
			2np-2p^2+k(2p-2k-5)+2p-3 &\textnormal{if}\:\:z=p-n-k-\frac{1}{2}, 0\le k\le \frac{p-4}{2}.

			\end{array}	
			\right.
			\]
			
			\item When $p$ is odd, we have
			\[\small{	{\rm GKdim}\:L(z\xi_p)=\left\{
				\begin{array}{ll}
				0 &\textnormal{if}\:\:z\in \mathbb{Z}_{\geq 0}\\
				2np-\frac{3}{2}p^2-\frac{1}{2}p &\textnormal{if}\:\:z<\frac{3+p}{2}-n \emph{~or~} z\notin \frac{1}{2}\mathbb{Z}\\

				2nk-2k^2-k &\textnormal{if}\:\:z=-k, k<n-p\emph{~and~}1\le k\le p\\
				2np-2p^2-p &\textnormal{if}\:\:z=-k, p<k<n-p\\
				2np-2p^2+k(2p-2k-3)+p-1 &\textnormal{if}\:\:z=p-n-k, \max\{0,2p-n\}<k\le\frac{p-3}{2}\\
				2np-2p^2+k(2p-2k-2)+n-p-1 &\textnormal{if}\:\:z=p-n-k, k\leq 2p-n\emph{~and~}0\le k<n-p\\
				2np-2p^2+k(2p-2k-1) &\textnormal{if}\:\:z=p-n-k,  n-p\leq k\leq \frac{p-3}{2}\\
				2np-2p^2 &\textnormal{if}\:\:z=-k+\frac{1}{2}, k\leq  n-p\\	 			
				2np-2p^2+k(2p-2k-5)+2p-3 &\textnormal{if}\:\:z=p-n-k-\frac{1}{2}, 0\le k\le \frac{p-5}{2}.\\
				
				\end{array}	
				\right.}
			\]

			\item When $3p<2n$ and $p$ is even, we have
			\[	{\rm GKdim}\:L(z\xi_p)=\left\{
			\begin{array}{ll}
			0 &\textnormal{if}\:\:z\in \mathbb{Z}_{\geq 0}\\
			2np-\frac{3}{2}p^2-\frac{1}{2}p &\textnormal{if}\:\:z<\frac{3+p}{2}-n \emph{~or~} z\notin \frac{1}{2}\mathbb{Z}\\

			
			2nk-2k^2-k &\textnormal{if}\:\:z=-k, 1\leq k< n-p\emph{~and~}k<p\\
			2np-2p^2-p &\textnormal{if}\:\:z=-k, p\le k< n-p\\		
			2np-2p^2+k(2p-2k-2)+n-p-1 &\textnormal{if}\:\:z=p-n-k,   0\leq k \leq 2p-n\\	
			2np-2p^2+k(2p-2k-3)+p-1 &\textnormal{if}\:\:z=p-n-k, 2p-n<0\le k\le\frac{p-3}{2}\\
			
			2np-2p^2 &\textnormal{if}\:\:z=-k+\frac{1}{2}, k\leq  n-p\\
			2np-2p^2+k(2p-2k-5)+2p-3 &\textnormal{if}\:\:z=p-n-k-\frac{1}{2}, 0\le k\le \frac{p-4}{2}.\\
			
			\end{array}	
			\right.
			\]


		\end{enumerate}

		\item If $p=n-1\:\:or\:\:p=n$, then
		\begin{enumerate}
			\item When $n$ is even, we have				
			\[		{\rm GKdim}\:L(z\xi_p)=\left\{
			\begin{array}{ll}
			\frac{1}{2}n^2-\frac{1}{2}n &\textnormal{if}\:\:z<2-n \emph{~or~}z\notin \mathbb{Z}\\	     	  	   
			k(2n-2k-1) &\textnormal{if}\:\:z=-2k\emph{~or~}-2k+1, 1\le k\le \frac{n-2}{2}\\
			
			0 &\textnormal{if}\:\:z\in \mathbb{Z}_{\geq 0}.\\
			\end{array}	
			\right.
			\]	
			
			\item When $n$ is odd, we have
			\[		{\rm GKdim}\:L(z\xi_p)=\left\{
			\begin{array}{ll}
			\frac{1}{2}n^2-\frac{1}{2}n &\textnormal{if}\:\:z<3-n \emph{~or~}z\notin \mathbb{Z}\\	     	  	   
			k(2n-2k-1) &\textnormal{if}\:\:z=-2k\emph{~or~}-2k+1, 1\le k\le \frac{n-3}{2}\\
			
			0 &\textnormal{if}\:\:z\in \mathbb{Z}_{\geq 0}.\\
			\end{array}	
			\right.
			\]	
		\end{enumerate}		
	\end{enumerate}		     
\end{prop}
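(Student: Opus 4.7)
The plan is to follow the same strategy used for the analogous propositions in Sections~4--6 and for the preceding $B_n$, $C_n$ GK-dimension propositions in this section: combine the reducibility information from Proposition~\ref{D} with the explicit algorithms in Theorem~\ref{integral} and Theorem~\ref{GKdim}, and exploit the monotonicity Lemma~\ref{GKdown} to propagate values once the first few shapes are computed. For irreducible points we simply have $\mathrm{GKdim}\,L(z\xi_p)=\dim(\fru)$ by Lemma~\ref{reducible}, and the dimensions of $\fru$ are already recorded in Section~2. Thus the real content is the computation at the reducible parameters.

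For $p=1$ and for $p=n-1,n$, the analysis is short: the fundamental weight $\xi$ is either $(1,0,\ldots,0)$ or $(\tfrac12,\ldots,\tfrac12)$, so $(\lambda+\rho)^-$ is completely explicit, and one reads off $p((\lambda+\rho)^-)^{odd}$ directly. I would handle these cases first, treating the integral, half-integral and generic (non--half-integral) $z$ separately, exactly as in the proof of the corresponding $B_n$ and $C_n$ propositions. The pattern $k(2n-2k-1)$ for $z=-2k$ or $z=-2k+1$ at $p=n-1,n$ follows from the two-column shapes produced by the RS insertion on the Young tableau when two equal entries appear in $(\lambda+\rho)^-$, and the irreducible value $\tfrac12 n^2-\tfrac12 n=\dim(\fru)$ comes from Proposition~\ref{D} via Lemma~\ref{reducible}.

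The main work is the middle case $2\le p\le n-2$, split into the three sub-cases according to the parity of $p$ and the sign of $3p-2n$, mirroring the split in Proposition~\ref{D}. In each sub-case I would:
\begin{enumerate}
\item Identify the first reducible point $z_0$ from Proposition~\ref{D}; for $z\in z_0+\tfrac12\bbZ_{\ge 0}$ use Theorem~\ref{GKdim} directly on $(\lambda+\rho)^-_{(0)}$, $(\lambda+\rho)^-_{(\tfrac12)}$ and the part in $[\lambda]_3$.
\item For $z\in\bbZ_{\le 0}$, observe that the entries of $(\lambda+\rho)^-$ form an almost-balanced block with either a doubled prefix/suffix of length $|z|$, producing a two-column shape with at most $\min(|z|,p,n-p)$ doubled rows; reading off $p^{ev}$ from \eqref{eq:ev-od} and summing $\sum(i-1)p_i^{ev}$ gives the quadratic expressions $2nk-2k^2-k$ or $p(2n-2p-1)$ etc.
\item For $z\in p-n-k+\bbZ$ (so that the ``inner'' and ``outer'' blocks overlap) I would track the three-column shape obtained after the RS insertion, exactly as in the tableaux drawn in the proofs of Propositions~\ref{B}, \ref{C}: one gets a partition of the form $(3^a,2^b,1^c)$ whose $p^{ev}$-sum evaluates to the stated values like $2np-2p^2+k(2p-2k-2)+n-p-1$.
\item For $z\in\tfrac12+\bbZ$ the contribution from $(\lambda+\rho)^-_{(\tfrac12)}$ is a separate two-column shape of size $2p$, and the contribution from $(\lambda+\rho)^-_{(0)}$ is the balanced half-integer block of size $2(n-p)$; after the first reducible half-integer point $z=p-n-k-\tfrac12$ the overlap again produces a three-column shape, giving the $2np-2p^2+k(2p-2k-5)+2p-3$ family.
\item Verify that the computed values are non-increasing in $z$ (Lemma~\ref{GKdown}), which both provides a consistency check and fills in the ranges not explicitly tabulated by interpolation between the listed breakpoints.
\end{enumerate}

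The main obstacle will be the bookkeeping of the three-column Young tableaux in step~(3), especially distinguishing the sub-sub-cases $k\le 2p-n$ versus $k>2p-n$ (where the overlap ``saturates'' and the second column reaches maximum length), and the parallel split in the half-integer case. These are combinatorial case distinctions of the same type already handled carefully in Propositions~\ref{B} and \ref{C}, but for type $D$ one must use $F_D$ rather than $F_B$ for the even-shifted part, which changes the role of $p^{ev}$ versus $p^{odd}$ throughout. Once the shape $P(\lambda)$ is identified in each range, every GK-dimension value in the statement is a direct summation against the sequence $(i-1)$ using \eqref{eq:ev-od}, and the claimed formulas follow by elementary arithmetic.
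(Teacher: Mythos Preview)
Your proposal is correct and follows essentially the same approach as the paper's own proof: both compute the Young tableau shapes of $(\lambda+\rho)^-$ (or its integral/half-integral pieces) case by case via the Robinson--Schensted insertion, read off $p^{ev}$ (and $p^{odd}$ where appropriate), and sum against $(i-1)$ using Theorems~\ref{integral} and~\ref{GKdim}, with Proposition~\ref{D} supplying the irreducible range and Lemma~\ref{GKdown} propagating monotonicity. The paper's proof is likewise a sketch that treats $p=1$, $p=n-1,n$ explicitly and then only the sub-case ``$p$ odd'' of $2\le p\le n-2$ in detail, so your outline is at least as complete as the paper's.
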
	
\begin{proof}
	First we consider the case when $p=1$. From Proposition \ref{D}, $M_I(\lambda)$ is irreducible when $z<2-n$ and $z\notin \mathbb{Z}$. So we only consider the case when $z\in \mathbb{Z}$.
\begin{enumerate}
	\item When $z=2-n$, we will have
	$$(\lambda+\rho)^-=(1,n-2,\dots,0,0,\dots,2-n,-1),$$
	$$p((\lambda+\rho)^-)^{odd}=(1,1,1,\underbrace{0,1,\dots,0,1}_{2n-6}).$$
	$
	{\rm GKdim}\:L(\lambda)=n^2-n-1-2\cdot\frac{(n-1)(n-2)}{2}=2n-3.
	$
	
	\item When $z=3-n$, we will have
	$$(\lambda+\rho)^-=(2,n-2,\dots,0,0,\dots,2-n,-2),$$
	$$p((\lambda+\rho)^-)^{odd}=(1,1,1,\underbrace{0,1,\dots,0,1}_{2n-6}).$$	
$
	{\rm GKdim}\:L(\lambda)=n^2-n-1-2\cdot\frac{(n-1)(n-2)}{2}=2n-3.
$	
	
	\item When $z=-1$, similarly we will have
	
	${\rm GKdim}\:L(\lambda)=2n-3$.
	
	\item When $z\ge 0$, We can easily get ${\rm GKdim}\:L(\lambda)=0$.
\end{enumerate}	

Finally, combining with Proposition \ref{D} we can get the formula of  ${\rm GKdim}\:L(\lambda)$ when $p=1$. \\

Now we consider the case when $p=n-1$ and $p=n$.	
\begin{enumerate}
	\item When $p=n-1$ and $n$ is even, we have the follows.
	\begin{enumerate}
		\item When $z=2-n$, we will have
		${\rm GKdim}\:L(\lambda)=\frac{1}{2}n^2-\frac{1}{2}n-1$ from the proof of Proposition \ref{D}.
		
		\item When $z=3-n$, we will have
		$$(\lambda+\rho)^-=(\frac{n}{2}+\frac{1}{2},\dots,\frac{3}{2}-\frac{1}{2}n,\frac{1}{2}n-\frac{3}{2},\dots,-\frac{1}{2}n-\frac{1}{2}),$$
		$$p((\lambda+\rho)^-)^{ev}=(\underbrace{1,\dots,1}_{n-2},1,0,1,0).$$
So	$
		{\rm GKdim}\:L(\lambda)=\frac{1}{2}n^2-\frac{1}{2}n-1.
	$
		\item When $z=4-n$, we will have
		$$(\lambda+\rho)^-=(\frac{n}{2}+1,\dots,2-\frac{1}{2}n,\frac{1}{2}n-2,\dots,-\frac{1}{2}n-1),$$
		$$p((\lambda+\rho)^-)^{ev}=(\underbrace{1,\dots,1}_{n-3},0,1,0,1,0,1).$$
So \begin{equation}\label{16}
{\rm GKdim}\:L(\lambda)=\frac{1}{2}n^2-\frac{1}{2}n-6.
\end{equation}

		\item When $z=5-n$, we will have
		$$(\lambda+\rho)^-=(\frac{n}{2}+\frac{3}{2},\dots,\frac{5}{2}-\frac{1}{2}n,\frac{1}{2}n-\frac{5}{2},\dots,-\frac{1}{2}n-\frac{3}{2}),$$
		\begin{align*}
		p((\lambda+\rho)^-)^{ev}&=(\underbrace{1,\dots,1}_{n-4},\underbrace{1,0,\dots,1,0}_{8})\\&=(\underbrace{1,\dots,1}_{n-3},0,1,0,1,0,1).
		\end{align*}
		By (\ref{16}), ${\rm GKdim}\:L(\lambda)=\frac{1}{2}n^2-\frac{1}{2}n-6$.
		
		\item When $z=-2$ and $z=-1$, we will have
		
		${\rm GKdim}\:L(\lambda)=2n-3$.
		
		\item When $z\ge 0$, we will have
		
		${\rm GKdim}\:L(\lambda)=0$.
	\end{enumerate}
	
	From the above discussion, we can find the formula of ${\rm GKdim}\:L(\lambda)$.
	
	\item When $p=n-1$ and $n$ is odd.
	
	The process is similar to the case when $n$ is even. We omit this part of the process.
	
	\item When $p=n$.
	
	The process is similar to the case when $p=n-1$, whether $n$ is odd or even. We omit this part of the process.
\end{enumerate}

If $2\le p\le n-2$, we only consider the case when $p$ is odd.


\begin{enumerate}
	\item  When $z\in\mathbb{Z}$,  $z=\frac{p}{2}-n+\frac{3}{2}$ is the first reducible point of $M_I(z\xi_p)$ by Proposition \ref{D}. We have the follows.
	\begin{enumerate}
		\item When $z\leq 0$,  we have
		${\rm GKdim}\:L(\lambda)=0.$
		
		\item When $z=-1$, we have			
		${\rm GKdim}\:L(\lambda)=2n-3.$
		
		\item When $z=\frac{p}{2}-n+\frac{3}{2}$, we will have
		$$(\lambda+\rho)^-=(\frac{p}{2}+\frac{1}{2},\dots,-\frac{p}{2}+\frac{1}{2},n-p-1,\dots,0,0,\dots,-n+p+1,\frac{p}{2}-\frac{1}{2},\dots,-\frac{p}{2}-\frac{1}{2}).$$		
		We divide the discussion into three cases:    
		\begin{enumerate}
			\item $3p>2n$. We have
			$$ p((\lambda+\rho)^-)^{ev}=(2,\underbrace{1,2,\dots,1,2}_{2n-2p-2},\underbrace{1,\dots,1}_{3p-2n-1},1,0,1).$$
			
			By (\ref{8}), 
			$
			{\rm GKdim}\:L(\lambda)\notag=2np-\frac{3}{2}p^2-\frac{1}{2}p-1.	
			$

			\item $2n-5<3p<2n$. We have 
			$$ p((\lambda+\rho)^-)^{ev}=(2,\underbrace{1,2,\dots,1,2}_{p-3},\underbrace{1,\dots,1}_{2n-3p+1},\underbrace{0,1,\dots,0,1}_{3p-2n+3}).$$
		So	$
			{\rm GKdim}\:L(\lambda) =2pn-\frac{3}{2}p^2-2p+n-\frac{5}{2}.
		$	
			
			\item $3p\le 2n-5$. We have 
			$$ p((\lambda+\rho)^-)^{ev}=(2,\underbrace{1,2,\dots,1,2}_{p-3},1,1,1,1,\underbrace{0,1,\dots,0,1}_{2n-3p-3}).$$	
		So
		$	{\rm GKdim}\:L(\lambda) =2pn-\frac{3}{2}p^2-\frac{1}{2}p-1.
		$	
		\end{enumerate}
	\end{enumerate}	
	In general, we can have the follows.
	\begin{enumerate}
		\item When $z=-k$, $n-p\ge k+1$ and $1\le k\le p$, we have
		$$p(\lambda+\rho)^-=(2^{2k+1},1^{2n-4k-2}),$$ 	
		and ${\rm GKdim}\:L(\lambda) =k(2n-2k-1).$	
		
		\item When $z=-k$, $n-p\ge k+1$ and $p<k$, we have
		$$p(\lambda+\rho)^-=(2^{2p+1},1^{2n-4k-2}),$$ 	
		and ${\rm GKdim}\:L(\lambda) =p(2n-2p-1).$		
		
		
		\item When $z=-(n-p)-k$, $0\le k<n-p$ and $p\ge n-p+k$, we have
		$$p(\lambda+\rho)^-=(4,3^{2k},2^{2n-2p-2k-2},1^{4p-2k-2n}),$$ 	
		and ${\rm GKdim}\:L(\lambda) =2np-2p^2-2k(k-p+1)+n-p-1.$	
		
		\item When $z=-(n-p)-k$, $ \frac{p-3}{2}\ge k\ge n-p$, we have
		$$p(\lambda+\rho)^-=(4,3^{2n-2p-2},2^{2k+2p-2n+2},1^{2p-4k-2}),$$ 	
		and ${\rm GKdim}\:L(\lambda) =2np-2p^2+k(2p-2k-1).$	
		
		
		\item When $z=-(n-p)-k$, $\max\{0,2p-n\}<k\le \frac{p-3}{2}$, we have
		$$p(\lambda+\rho)^-=(4,3^{2k},2^{2p-4k-2},1^{2n--4p+2k}),$$ 	
		and ${\rm GKdim}\:L(\lambda) =2np-2p^2+k(2p-2k-3)+p-1.$		
	\end{enumerate}	
	\item When $z\notin\frac{1}{2}\mathbb{Z}$, we have
	$${\rm GKdim}\:L(\lambda) =2np-\frac{3}{2}p^2-\frac{1}{2}p.$$	
	
	\item When $z\in\frac{1}{2}+\mathbb{Z}$, $z=\frac{p}{2}-n+2$ is the first reducible point of $M_I(z\xi_p)$ and we have follows.
	\begin{enumerate}
		\item When $z\ge p-n$, we have
		$${\rm GKdim}\:L(\lambda) =2np-2p^2.$$
		
		\item When $z=\frac{p}{2}-n+2$, we have
		$$(\lambda+\rho)^-_{(\frac{1}{2})}=(\frac{p}{2}+1,\cdots,-\frac{p}{2}+2,\frac{p}{2}-2,\cdots,-\frac{p}{2}-1).$$
		So $${\rm GKdim}\:L(\lambda) =2np-\frac{3}{2}p^2-\frac{1}{2}p-3.$$	
	\end{enumerate}
	In general, we have the follows.
	\begin{enumerate}
		%
		\item When $z=-k+\frac{1}{2}$, $k\leq  n-p$, we have 
		${\rm GKdim}\:L(\lambda) =2np-2p^2$.
		
		\item When $z=p-n-k-\frac{1}{2}$, $0\le k\le \frac{p-5}{2}$, we have ${\rm GKdim}\:L(\lambda) =	2np-2p^2+k(2p-2k-5)+2p-3 $.		 
		
	\end{enumerate}

	\end{enumerate}	
	
\end{proof}
From \cite{EH}, we know that the highest weight module $L(-(n-2)\xi_1)$ is the unique Wallach representation of $SO(2,2n-2)$, which is  unitary. $L(-2k\xi_n)$ is the $k$-th Wallach representation of $SO^*(2n)$ for $1\leq k\leq  [\frac{n}{2}]-1$, which is  unitary.

The Gelfand-Kirillov dimensions of Wallach representations are also uniformly listed in \cite[Proposition 4.1]{BX}.

\end{document}